\def\oV{{\widetilde V}}
\newtheorem{theorem}{Theorem}[section]     %%y (numérotation croissante, pour éviter Lemma 9.1 et Th. 9.1)
\newtheorem{lemma}[theorem]{Lemma}
\newtheorem{proposition}[theorem]{Proposition}
\newtheorem{corollary}[theorem]{Corollary}
\newcommand{\N}{\mathbb N}
\newcommand{\Z}{\mathbb Z}
\def\Z{{\mathbb Z}}
\author[Yann Bugeaud and Christophe Reutenauer]{Yann Bugeaud\affiliationmark{1,2} 
  \and Christophe Reutenauer \affiliationmark{3} }
\title[On the conjugates of Christoffel words]{On the conjugates of Christoffel words}
\affiliation{
  % one line per affiliation, no postal codes, grant numbers or similar
 Universit\'e de Strasbourg et CNRS, Strasbourg, France\\
Institut universitaire de France\\
Universit\'e du Qu\'ebec \`a Montr\'eal, Montr\'eal, Canada}
\keywords{Combinatorics on words, Christoffel word, Ostrowski numeration}
\begin{document}
% This is only used if you are compiling for a volume before vol 25
% \publicationdetails{VOL}{2015}{ISS}{NUM}{SUBM}
% This is the new form of collecting the data, starting with vol 25
\publicationdata{vol. 27:3}{2025}{20}{10.46298/dmtcs.15140}{2025-01-27; 2025-01-27; 2025-06-17; 2025-10-21}{2025-10-23}
%{1998-10-14; 1998-10-14; 2002-07-19; 2014-02-05; 2015-09-09; 2022-12-25}
%{2022-12-3}
%\maketitle
%\subjclass[2010]{68R15}  
%\keywords{Combinatorics on words, Christoffel word, Ostrowski numeration}
\maketitle

\begin{abstract} 
\vskip2mm
  We introduce a parametrization of the 
  conjugates of Christoffel words based on the integer Ostrowski numeration system. 
  We use it to give a precise description of the borders (prefixes which are also suffixes) of the 
  conjugates of Christoffel words and to revisit the notion of Sturmian graph 
  introduced by Epifanio et al. 
\end{abstract}

%\tableofcontents

\section{Introduction}

In an article published in 1875 \cite {C}, Christoffel introduced a class of words on a binary alphabet, which now bear his name (following 
Berstel \cite{B}); they were shortly after rediscovered by Smith \cite{S}. Independently, in his 1880 work on minima of quadratic functions \cite{M1,M2}, Markoff 
used these words to construct certain quadratic forms, which satisfy sharp inequalities relating their minima and their discriminant. Markoff was certainly 
unaware of the work of Christoffel. The relation between the work of Markoff and the Christoffel words was explicitly known to Frobenius \cite{F}, 
%where one finds also 
who also formulated the famous conjecture on the Markoff numbers (see \cite{A}). The theory of these words may be found in several books: \cite{PF, L,BLRS,A,Re}.

The conjugates of Christoffel words (obtained by cyclically permuting these words) also have some importance, since they appear in different areas:

1. They coincide with 
the elements of the free group with two generators subject to the following conditions: they are
positive (that is, with no inverted letter), they are cyclically reduced, and they are part of a basis of this free group; see \cite{OZ, KR}. 

2. They are the ``perfectly clustering words" on a two-letter alphabet; this means that 
the last column of the Burrows-Wheeler tableau of 
such 
a word\footnote{This tableau was defined by Burrows and Wheeler in the theory of data compression, see \cite{RR} Section 4.}, whose rows are the lexicographically sorted conjugates, is decreasing; see \cite{MRS,FZ}. 

3. They constitute the finitary version of the Sturmian 
(infinite) words, which are obtained by discretizing straight lines in the plane, and which are characterized by the property that for each $n$, they have exactly $n+1$ factors of length $n$. For example, a word is a conjugate of a Christoffel word if and only if all its conjugates 
are factors of a Sturmian word; equivalently, this word (of length $n$ say) is primitive and has exactly $n-1$ circular factors of length $n-2$; see \cite{L}, \cite[Theorem 15.3.1]{Re}.

4. Besides the Christoffel words, which encode the Markoff forms and 
their minima, their conjugates correspond to the ``small values" of these quadratic forms; see \cite{Re1}. 

It is well known that Christoffel words are parametrized by nonnegative rational numbers. In the present article we first introduce a parametrization of the 
conjugates of Christoffel words, which is a finitary version of \cite{BL}. This parametrization is based on the integer Ostrowski numeration system. It generalizes 
a construction  which is widely used in the theory of Sturmian words, following Rauzy \cite{R} (the ``Rauzy rules"), and de Luca and Mignosi \cite{dL} (the 
``standard words"). The construction is given in (\ref{eq}). Theorem \ref{Vm} states 
that the whole conjugation class is constructed, and that it is independent of the chosen Ostrowski representation. As a corollary we obtain a result of Frid 
\cite{Fi}, which states (in some equivalent formulation) that the prefixes of a standard word are parametrized by legal Ostrowski representations; see Corollary 
\ref{frid}, which appears as a noncommutative lifting of the Ostrowski numeration system.

In Section \ref{BORDS}, we study the borders (a prefix which is also a suffix) of conjugates of Christoffel words. It is well known that the length of the longest 
border of a word and its smallest period are simply related: their sum is the length of the word. 
The study of periods in words is an important matter in combinatorics 
on words, in particular in the theory of Sturmian words: it is known that each finite Sturmian word has a nontrivial proper period, except precisely the Christoffel 
words. We thus focus on conjugates of Christoffel words, and determine their longest borders. Our parametrization of the conjugates allows us to give precise 
statements on the form of these borders. In particular, they are themselves conjugates, or a power of them (Theorem \ref{bords}, Corollaries \ref{power} and 
\ref{H}). Note that smallest periods of conjugates of Christoffel words have been previously computed by Lapointe \cite{La}, and applied by her to the determination of normal forms, thereby allowing her to characterize conjugates within the class of Sturmian words. The set of smallest periods is also studied in \cite{HN} and \cite{CS}.

In Section \ref{revisited}, we give an application of our methods to notions and results due to Epifanio, Frougny, Gabriele, Mignosi and Shallit \cite{EMSV,EFGMS}. The result of Frid, once formulated for 
the so-called ``lazy" Ostrowski representation (a notion introduced by these authors), may be translated into a result on the paths of a certain graph, called the ``compact graph"; it 
states that each suffix of the central word corresponding to a Christoffel word is the label of a unique path, starting from the origin, in this graph. By  specializing 
to lengths, one obtains the ``Sturmian graph": this graph has the property that each integer from 0 to the length of the central word is the label of a unique path; 
see Corollaries \ref{compact} and \ref{SturmianGraph}, due to \cite{EMSV,EFGMS}. As a consequence, we obtain the new result that these two graphs are 
naturally embedded in the tree of central words and in the Stern-Brocot tree; see Corollary \ref{embedded}, for the proof of which we use the ``iterated palindromisation" of 
Aldo de Luca.

Note that a link between lazy representations and periods of words was already established by Gabric, Rampersad and Shallit \cite{GRS}: they determine the set of 
periods of each prefix of length $n$ of a characteristic Sturmian (infinite) word and they show that the cardinality of this set is equal to the sum of the digits in the lazy representation of $n$.

In the next five (short) sections, we recall classical results on continuant polynomials, 
Ostrowski numeration, conjugation, and Christoffel words. Our new results are stated and proved 
in Sections \ref{construct} to \ref{revisited}.

\section{Continuant polynomials}\label{sec-cont}

{\it Continuant polynomials} are defined for any $k\geq 0$ and any integers $n_1,\ldots,n_k$ as follows:
$K_{-1}=0,K_0=1$ and  
$$     
K_k(n_1,\ldots,n_k)=K_{k-1}(n_1,\ldots,n_{k-1})n_k+K_{k-2} (n_1,\ldots,n_{k-2})
$$
for any $k\geq 1$ ({\it right recursion formula}). It is customary to drop the index $k$ and to write $K(n_1,\ldots,n_k)$ for $K_k(n_1,\ldots,n_k)$, and in particular $K()=1$.
One has (for example \cite{Co} p. 116)

\begin{equation}\label{continuant-matrice}
P(n_1)\cdots P(n_k)=\left( 
\begin{array} {cc}
K(n_1,\ldots,n_k)&K(n_1,\ldots,n_{k-1})\\
K(n_2,\ldots,n_{k})& K(n_2,\ldots,n_{k-1})
\end{array} 
\right),
\end{equation}
where $P(n)=\left( \begin{array} {cc}n&1\\1&0 \end{array} \right)$. By associativity of the matrix product, one obtains 
the {\it left recursion formula:} $$K(n_1,\ldots,n_k)=n_1K(n_2,\ldots,n_{k})+K(n_3,\ldots,n_{k}).$$ It follows also, by 
transposing the product, and using the symmetry of the matrices $P(n)$, 
that we have $K(n_1,\ldots,n_k)=K(n_k,\ldots,n_1)$.

For later use, we mention the identity, for $k\geq 1$,
\begin{equation}\label{n-1} K(n_1,\ldots,n_k)=K(n_1-1,n_2,\ldots,n_k)+K(n_2,\ldots,n_k),
\end{equation}
which follows easily from the left recursion formula.
The link with continued fractions is that each finite continued fraction 
$$
[n_1,\ldots,n_k] = n_1 + \frac{1}{n_2 + \frac{1}{n_3  + \cdots+\frac{1}{n_k}} }
$$
is equal to the reduced fraction $K(n_1,\ldots,n_k)/K(n_2,\ldots,n_k)$. Equivalently, the continued fraction $[0,n_1,\ldots, n_k]$ is equal to $K(n_2,\ldots,n_k)/K(n_1,\ldots,n_k)$.

\section{Ostrowski numeration}\label{Onumer}

%\subsection{Integers}

Let $a_1,a_2, \ldots, a_m$ be a finite sequence of positive natural numbers. Define the positive integers $q_0,q_1,\ldots,$ $q_{m}$ by $q_i=K(a_1,\ldots,a_i)$, 
for $i=0, \ldots , m$. 
Note that 
$q_0=1$, $q_1=a_1$, $q_2=a_1a_2+1$, and we let $q_{-1}=0$, in accordance with the conventions for continuant polynomials. Note that the right recursion for continuant polynomials gives $q_i=q_{i-1}a_i+q_{i-2}$ for any $i = 1, \ldots ,  m$. Note that the sequence of $q_i$, $i\geq -1$, is strictly increasing, except for the following case: $a_1=1$, $q_0=q_1$.

It is useful to define
\begin{equation}
b_1=a_1-1, b_i=a_i \,\, \mbox{if} \,\, i\geq 2.
\end{equation}

Any expression
\begin{equation}\label{Ost1}
N=d_1q_0+d_2q_1+\cdots+d_mq_{m-1},
\end{equation}
where the digits $d_i$'s are in $\mathbb Z$, is called {\it (unrestricted) Ostrowski representation} of the integer $N$. 
We stress that, unlike in previous works, we allow the digits to be negative.

The representation (\ref{Ost1}) is called {\it legal} if one has the inequalities
 \begin{equation}\label{ineq}
\forall i\geq 1, 0\leq d_i\leq b_i.
\end{equation}
Among the legal representations, we distinguish two of them. We say that 
the representation (\ref{Ost1}) is {\it greedy} if it is legal and if the following condition is satisfied
\begin{equation}\label{greedy}
\forall i\geq 2, d_i=b_i \Rightarrow d_{i-1}=0.
\end{equation}

We say that the representation (\ref{Ost1}) is {\it lazy} if it is legal and if, with $k=\max\{i\mid d_i\neq 0\}$,
\begin{equation}\label{lazy}
\forall i, 2\leq i\leq k, d_i=0 \Rightarrow d_{i-1}=b_{i-1}.
\end{equation}

\begin{proposition}\label{greedylazy} (i) Each integer $N=0,\ldots,q_m-1$ has a unique greedy representation.

(ii) Each $N=0,\ldots,q_m+q_{m-1}-2$ has a unique lazy representation.
\end{proposition}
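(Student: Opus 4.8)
The plan is to handle both parts by a single counting-plus-injectivity strategy: show that the evaluation map $(d_1,\dots,d_m)\mapsto N=\sum_{i=1}^m d_iq_{i-1}$ is injective on the set of greedy (resp.\ lazy) representations, and that this set has exactly as many elements as the target interval. Since an injection between two finite sets of equal cardinality is automatically a bijection, existence and uniqueness come out together. Two preliminary computations drive everything. First, telescoping via $a_iq_{i-1}=q_i-q_{i-2}$ gives $\sum_{i=1}^m b_iq_{i-1}=q_m+q_{m-1}-2$, so every legal representation takes values exactly in $\{0,\dots,q_m+q_{m-1}-2\}$. Second, the complementation $d_i\mapsto b_i-d_i$ is an involution of the set of legal representations that sends the value $N$ to $q_m+q_{m-1}-2-N$ and converts the greedy clause ``$d_i=b_i\Rightarrow d_{i-1}=0$'' into ``$e_i=0\Rightarrow e_{i-1}=b_{i-1}$''. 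This duality lets me transport facts about greedy blocks into facts about the lower part of lazy ones.

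For part (i) I would first prove, by induction on $k$, that a greedy-legal block $(d_1,\dots,d_k)$ satisfies $\sum_{i=1}^k d_iq_{i-1}\le q_k-1$, splitting on whether $d_k<b_k$ or $d_k=b_k$ (the latter forcing $d_{k-1}=0$ and reducing to the bound at $q_{k-2}$). Injectivity then follows from a largest-difference argument: if two greedy representations of $N$ first differ, reading from the top, at index $j$ with $d_j>d_j'$, then $(d_j-d_j')q_{j-1}=\sum_{i<j}(d_i'-d_i)q_{i-1}\le q_{j-1}-1$ by the bound applied to $(d_1',\dots,d_{j-1}')$, contradicting the left side being $\ge q_{j-1}$. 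Finally the number $G_k$ of greedy blocks satisfies $G_0=1$, $G_1=a_1$ and $G_k=b_kG_{k-1}+G_{k-2}=a_kG_{k-1}+G_{k-2}$ for $k\ge2$ (the greedy clause only constrains indices $\ge2$, so the case $a_1=1$ causes no trouble), whence $G_k=q_k$. The evaluation map is thus an injection from a set of size $q_m$ into $\{0,\dots,q_m-1\}$, hence a bijection.

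For part (ii), complementation turns the greedy bound into the lower bound that any block $(d_1,\dots,d_n)$ obeying ``$d_i=0\Rightarrow d_{i-1}=b_{i-1}$'' for $2\le i\le n$ satisfies $\sum_{i=1}^n d_iq_{i-1}\ge q_{n-1}-1$. Now if two lazy representations first differ from the top at $j$ with $d_j>d_j'$, then $d_j\ge1$, so the clause at $i=j$ is vacuous and $(d_1,\dots,d_{j-1})$ obeys the hypothesis above, giving $\sum_{i<j}d_iq_{i-1}\ge q_{j-2}-1$; combined with $\sum_{i<j}d_i'q_{i-1}\le q_{j-1}+q_{j-2}-2$ and $(d_j-d_j')q_{j-1}\ge q_{j-1}$, this forces $q_{j-1}\le q_{j-1}-1$, a contradiction, so the map is injective. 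Counting by the top nonzero index $k$, the lower part ranges freely over the $q_{k-1}$ blocks satisfying the complemented clause, so the number of lazy representations is $1+\sum_{k=1}^m b_kq_{k-1}=q_m+q_{m-1}-1$, exactly the size of $\{0,\dots,q_m+q_{m-1}-2\}$; injectivity again upgrades to a bijection.

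The routine part will be the greedy induction. The hard part is part (ii): here the largest-difference trick alone is insufficient, because the low-order digits of a lazy representation can be large, so one must feed in the complemented lower bound to pin $\sum_{i<j}d_iq_{i-1}$ \emph{from below}. The key idea that makes both the counting and the injectivity fall out uniformly is the complementation involution, together with the observation that the asymmetry in the definition of ``lazy'' (the cutoff at the top nonzero index $k$) is precisely what survives when only the lower block is complemented.
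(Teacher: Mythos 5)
Your proposal is correct, but it takes a genuinely different route from the paper's. The paper proves existence constructively and separately from uniqueness: for the greedy case by induction with an explicit Euclidean-division step, and for the lazy case by choosing the maximal multiple $jq_k\le N$ together with a digit fix-up in the exceptional situation ``$d_k=0$ and $d_{k-1}\neq b_{k-1}$''; uniqueness is then obtained by sandwiching $N$ between consecutive values (its Lemma \ref{greedylazyineq}) to pin down the top nonzero index and identifying the top digit as a Euclidean quotient, the lazy case passing, as you do, through the complementation $d_i\mapsto b_i-d_i$. You instead prove only injectivity of the evaluation map---via the greedy upper bound $\sum_{i\le k}d_iq_{i-1}\le q_k-1$ (which is exactly the paper's Lemma \ref{greedylazyineq}~(i)) and its complemented lower bound $\ge q_{n-1}-1$, deployed in a first-difference-from-the-top argument---and then get existence for free by counting: the recursion $G_k=a_kG_{k-1}+G_{k-2}$ gives exactly $q_m$ greedy blocks, and stratifying lazy blocks by the top nonzero index $k$, with the lower part ranging over the $q_{k-1}$ blocks satisfying the full complemented clause, gives $1+\sum_{k}b_kq_{k-1}=q_m+q_{m-1}-1$ lazy blocks, matching the interval sizes, so injectivity upgrades to bijectivity. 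Your handling of the subtle point---that ``lazy'' is cut off at the top nonzero index, so complementation is a bijection between greedy blocks and \emph{fully} complemented-clause blocks rather than lazy ones---is correct: you invoke the clause only strictly below the first disagreement (where $d_j\ge1$ guarantees it holds) and only on the strictly lower block in the count, precisely where the cutoff is immaterial; likewise your parenthetical about $a_1=1$ correctly disposes of the only degenerate case ($b_1=0$) in the recursion for $G_k$. What your approach buys is a uniform, case-light argument that avoids the paper's delicate fix-up in the lazy existence proof; what the paper's approach buys is an actual algorithm computing the representations and, importantly for the rest of the paper, the standalone inequalities of Lemmas \ref{altlazyineq} and \ref{greedylazyineq}, which are reused later (in Lemma \ref{Ost-C}, Corollary \ref{H}, and Corollary \ref{lazyBig}); note that your telescoping identity $\sum_{i=1}^m b_iq_{i-1}=q_m+q_{m-1}-2$ is itself the paper's Lemma \ref{greedylazyineq}~(iii).
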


The existence of a representation (\ref{Ost1}) is implicit in Ostrowski's article \cite[p.178]{O}; (i) is stated in \cite{D} p.83, 
%%y    là, je ne comprends pas à quoi se rapportent (ii) et p.83
and proved by Fraenkel, \cite[Theorem 3]{Fr} (see also \cite[Theorem 3.9.1]{AS} for a proof).
Lazy Ostrowski representations were introduced by Epifanio, Frougny, Gabriele, Mignosi and Shallit in \cite{EFGMS}; (ii) follows from their work.

For the sake of completeness, we give a proof of Proposition \ref{greedylazy} in the Appendix (Section \ref{app}).

For later use, we state the following result. We say that a sequence $d_1,\ldots,d_k$ is {\it alternating} if its values are alternatively $0$ and $b_i$; there are therefore two alternating 
sequences of length $k$.

\begin{lemma}\label{Ost-C} Let $\sum_{j=1}^{m} d_j q_{j-1}$ be a greedy Ostrowski representation. 
Then, the inequality 
$$
\sum_{j=1}^{m} (b_j -d_j)q_{j-1}\leq q_{m-1}-1
$$
holds if and only if $d_m = b_m$ and the sequence $d_i, i=1,\ldots,m$, is alternating.
\end{lemma}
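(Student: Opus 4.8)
The plan is to reduce the stated equivalence to a statement about the \emph{value} represented, and then to identify the extremal greedy representation.

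First I would evaluate the full complementary total $\sum_{j=1}^m b_j q_{j-1}$. Since $b_1 = a_1 - 1$ and $b_j = a_j$ for $j \ge 2$, the right recursion $q_j = a_j q_{j-1} + q_{j-2}$ gives $b_j q_{j-1} = q_j - q_{j-2}$ for $j \ge 2$, while $b_1 q_0 = a_1 - 1 = q_1 - q_0$. Telescoping then yields $\sum_{j=1}^m b_j q_{j-1} = q_m + q_{m-1} - 2$. Writing $N = \sum_{j=1}^m d_j q_{j-1}$, the left-hand side of the claimed inequality equals $(q_m + q_{m-1} - 2) - N$, so $\sum_{j=1}^m (b_j - d_j) q_{j-1} \le q_{m-1} - 1$ is equivalent to $N \ge q_m - 1$. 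Thus the lemma becomes: a greedy representation satisfies $N \ge q_m - 1$ if and only if $d_m = b_m$ and the sequence is alternating.

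Next I would pin down the alternating candidate. Requiring $d_m = b_m$ singles out, among the two length-$m$ alternating sequences, the one with $d_m = b_m$, $d_{m-1} = 0$, $d_{m-2} = b_{m-2}$, and so on. This sequence is legal and greedy, since the digits equal to $b_i$ occur exactly at the indices congruent to $m$ modulo $2$, where the preceding digit is $0$; and the same telescoping as above (summing the nonzero terms down from $i = m$) shows that it represents exactly $q_m - q_0 = q_m - 1$. By the uniqueness of greedy representations on $\{0, \dots, q_m - 1\}$ (Proposition \ref{greedylazy}(i)), it is therefore the \emph{only} greedy representation of $q_m - 1$.

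It remains to establish that $q_m - 1$ is the largest value attainable by an $m$-digit greedy representation, i.e.\ that $N \le q_m - 1$ always holds; then $N \ge q_m - 1$ forces $N = q_m - 1$, which by the previous paragraph happens precisely for the alternating sequence with $d_m = b_m$. This bound is the technical heart, and I would prove it by induction on $m$ (with base cases $m = 0, 1$), splitting on the top digit. If $d_m = b_m$, greediness forces $d_{m-1} = 0$, and applying the inductive bound to the greedy prefix $(d_1, \dots, d_{m-2})$ gives $N \le b_m q_{m-1} + (q_{m-2} - 1) = q_m - 1$. If instead $d_m \le b_m - 1$, applying it to $(d_1, \dots, d_{m-1})$ gives $N \le (b_m - 1) q_{m-1} + (q_{m-1} - 1) = q_m - q_{m-2} - 1 < q_m - 1$. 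The main obstacle is the bookkeeping of this induction, in particular the two-step drop from $m$ to $m-2$ in the first case and the verification that the greedy condition is inherited by the truncated prefixes; once the bound is secured, the equivalence follows immediately.
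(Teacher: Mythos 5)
Your proposal is correct and follows essentially the same route as the paper: you reduce the inequality to $N \ge q_m - 1$ via the complement identity $\sum_{j=1}^m b_j q_{j-1} = q_m + q_{m-1} - 2$, then identify the unique greedy representation of $q_m - 1$ as the alternating sequence with $d_m = b_m$. The only difference is packaging: where the paper cites Proposition \ref{greedylazy} and its appendix lemmas (the telescoping sum is Lemma \ref{greedylazyineq}~(iii), the value of the alternating sequence is Lemma \ref{altlazyineq}~(i), and your inductive bound $N \le q_m - 1$ with the case split on the top digit is exactly the proof of Lemma \ref{greedylazyineq}~(i)), you re-derive these facts inline by the same arguments.
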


\begin{proof} By Proposition \ref{greedylazy} (ii), $\sum_{j=1}^{m} b_jq_{j-1}=q_m+q_{m-1}-2$ since the left-hand side is a lazy representation, and is necessarily the largest 
one. Thus the inequality of the lemma is equivalent to $\sum_{j=1}^{m} d_jq_{j-1}\geq q_m-1$. By the proposition 
again, part (i) this time, this inequality is equivalent to the fact
that
the left-hand side is the unique greedy representation of $q_m-1$. 
But by Lemma \ref{altlazyineq} (i), this unique representation is the alternating one, with $d_m=b_m$.
\end{proof}

\section{Conjugation}

We consider an alphabet $A$, the free monoid $A^*$ generated by $A$ and the free group $F(A)$ generated by $A$. Let $1$ denote the identity element of $A^*$.
If $g$ is in $F(A)$ and $x$ in $A$, we 
denote by $|g|_x$ the number of occurrences of $x$ in $g$, where one counts with -1 the occurrences of $x^{-1}$; this is well defined and does not depend on the expression for $g$. Moreover, define $|g|=\sum_{x\in A}|g|_x$, the {\it algebraic length} of $g$. In particular, if $g\in A^*$, then $|g|$ is the 
{\it length} of $g$.

Two words $u,v$  in $A^*$ are called {\it conjugate} if  for some words $x,y\in A^*$, one has $u=xy,v=yx$. 
The {\it conjugator} is the mapping of $A^*$ into itself that maps each word $w=au$, $a\in A,u\in A^*$, onto $ua$ (with $C(1)=1$). Hence two words in $A^*$ are conjugate if and only one is the image of the other under some power of the conjugator: $v=C^{|x|}(u)$, with the previous notations. 

Since $yx=x^{-1}(xy)x$, two words $u,v$ conjugate in $A^*$ are conjugate in $F(A)$, too. The converse is also true, as is well known, and one may be more precise.

\begin{lemma}\label{conjug} Let $u,v\in A^*$, $g\in F(A)$ be such that $v=g^{-1}ug$. Then $u,v$ have the same length $n$ and $v=C^{|g|}(u)$. Let $r$ be the remainder of the Euclidean division of $|g|$ by $n$. Then $u=xy,v=yx$, $u,v\in A^*$, with $x$ of length $r$.
\end{lemma}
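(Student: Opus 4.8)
The plan is to separate the three assertions and treat them in the order stated, reducing everything to two classical facts about free groups.

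First I would prove the length equality by exploiting that each $|\cdot|_x$ is additive on $F(A)$. Since $v = g^{-1}ug$, for every letter $x \in A$ one gets $|v|_x = -|g|_x + |u|_x + |g|_x = |u|_x$, so $u$ and $v$ contain the same number of each letter and in particular $|u| = |v| =: n$. The case $n = 0$ (both words empty) is trivial, so I assume $n \geq 1$. Next I would record how the conjugator $C$ acts by inner automorphisms: from the defining relation $C(aw) = wa = a^{-1}(aw)a$ for $a \in A$, an easy induction shows that if $p$ is the prefix of $u$ of length $k$ (for $0 \le k \le n$), then $C^k(u) = p^{-1} u p$; in particular $C^n(u) = u$, so $C$ is a bijection of period $n$ on words of length $n$ and $C^t = C^{\,t \bmod n}$ for every $t \in \Z$. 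Writing $u = z^d$ with $z \in A^*$ its primitive root and $\ell = |z| = n/d$, the same computation gives $C^\ell(u) = u$, i.e. $\ell$ is a cyclic period of $u$.

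The heart of the argument is to identify which cyclic shift $v$ is. Since $u$ and $v$ are positive words, they are cyclically reduced, and by the classical description of conjugacy in free groups two conjugate cyclically reduced words are cyclic permutations of one another; hence $v = C^s(u) = p_s^{-1} u p_s$ for some $s$ with $0 \le s < n$, where $p_s$ is the length-$s$ prefix of $u$. Comparing with $v = g^{-1} u g$ shows that $c := g\,p_s^{-1}$ satisfies $c^{-1} u c = u$, i.e. $c$ commutes with $u$. By the classical fact that the centralizer of a nontrivial element of a free group is the infinite cyclic group generated by its primitive root, $c \in \langle z \rangle$, so $|g| - s = |c|$ is a multiple of $\ell$. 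Combined with $C^\ell(u) = u$ (hence $(C^\ell)^i(u)=u$ for all $i\in\Z$) this yields $C^{|g|}(u) = C^s(u) = v$, which is the second assertion.

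Finally, the third assertion is a reformulation: writing $r$ for the remainder of $|g|$ modulo $n$, the period relation $C^n(u) = u$ gives $v = C^{|g|}(u) = C^r(u)$, and if $u = xy$ with $|x| = r$ then $C^r(u) = x^{-1}(xy)x = yx$, so $u = xy$ and $v = yx$ with $x,y \in A^*$ as required. I expect the main obstacle to be the middle paragraph: pinning down the shift $s$ and then matching the exponent $|g|$ to it. This is exactly where the two free-group inputs are needed — conjugacy of cyclically reduced words equals cyclic permutation, and centralizers are cyclic — and where one must use that the relevant period of $u$ is the length $\ell$ of its primitive root rather than $n$ itself.
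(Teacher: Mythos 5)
Your proof is correct, but it follows a genuinely different route from the paper's. The paper proves $v=C^{|g|}(u)$ directly by induction on the length of a reduced expression of $g$: since $v=g^{-1}ug$ lies in $A^*$, cancellation must occur at a junction, so one peels a letter off $g$ and the matching letter off $u$ simultaneously, reducing to a shorter conjugator; the third assertion then follows from $C^n=\mathrm{id}$ on words of length $n$, exactly as in your last paragraph. You instead import two classical free-group facts --- conjugate cyclically reduced elements are cyclic permutations of one another, and the centralizer of a nontrivial element is infinite cyclic generated by its primitive root --- to first locate $v$ as some shift $C^s(u)=p_s^{-1}up_s$, and then to show that the discrepancy $c=gp_s^{-1}$ lies in $\langle z\rangle$, whence $|g|\equiv s \pmod{\ell}$ with $\ell=|z|$, and $C^\ell(u)=u$ converts this congruence into $C^{|g|}(u)=C^s(u)=v$. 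Both arguments are sound: the paper's induction is elementary and self-contained (needing only that reduction preserves algebraic length), while yours is shorter modulo the cited theorems and makes transparent \emph{why} the exponent $|g|$ is determined only modulo $\ell$ --- the ambiguity is precisely the centralizer $\langle z\rangle$. One point you should make explicit in the middle paragraph: the centralizer theorem gives the primitive root of $u$ as a free-group element, and you need this to coincide with the word-combinatorial primitive root $z$ of the positive word $u=z^d$; this holds because a root of a cyclically reduced element is cyclically reduced, and a cyclically reduced root of a positive word is itself a positive word --- a one-line check, but it is the hinge on which the conclusion $|c|\in\ell\Z$ turns.
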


\begin{proof} The first assertion is clear, by definition of the algebraic length. 

We may assume that $g$ is reduced, that is, that $g$ is written as a product of elements of $A$ and their inverses, in such a way that no factor $aa^{-1}$ nor $a^{-1}a$ occurs in this 
product (one obtains a reduced expression of an element $g$ by 
removing
these factors; it is well known that this algorithm does not change the algebraic length $|g|$).

We show that $v=C^{|g|}(u)$, by induction on the length of the reduced expression of $g$. If this length is 0, 
then $g=1$ and the result is evident. Suppose that the length of $g$ is $\geq 1$. We 
have $v=g^{-1}ug$ and $v$ is reduced, being in $A^*$. Hence the first letter $a$ of $u$ is equal to the inverse of the last letter of $g^{-1}$, that is, equal to the first letter of $g$. 
Thus $u=au_1, g=ag_1$, $u_1\in A^*$, $g_1\in 
F(A)$, and $g_1$ is reduced and its length is one less than that of $g$. Then $v=(ag_1)^{-1}au_1ag_1=g_1^{-1}u_1ag_1$. By induction, $v=C^{|g_1|}(u_1a)$. Hence $g=C^{|g_1|}\circ C(u)=C^{|g_1|+1}(u)$, which implies the result.

Since $C^n$ is the identity on the words of length $n$, we have $C^{|g|}(u)=C^r(u)$, and this implies the last assertion.
\end{proof}

\section{Morphisms}\label{morph}

We consider now the alphabet $A=\{a,b\}$ ordered by $a<b$.

The endomorphism of $A^*$ (resp. $F(A)$), sending $a$ onto $u$ and $b$ onto $v$, is denoted 
by $(u,v)$. Each endomorphism of $A^*$ extends uniquely to an endomorphism of $F(A)$.

We define certain endomorphisms of $A^*$ and $F(A)$: $$E=(b,a), G=(a,ab), \widetilde G=(a,ba), D=(ba,b), \widetilde D=(ab,b),$$ and 
$$
\pi(i,j)=(a^iba^j,a),
$$ 
for all nonnegative integers $i,j$. 
Note that all these endomorphisms, when viewed on $F(A)$, are automorphisms of $F(A)$. 

One has $G^i=(a,a^ib), D^i=(b^ia,b)$, and $\widetilde G^j=(a,ba^j), \widetilde D^j=(ab^j,b)$ for all 
nonnegative integers $i,j$. It follows that
\begin{equation}\label{piGE}
\pi(i,0)=G^iE=ED^i, \pi(0,j)=E\widetilde D^j=\widetilde G^jE.
\end{equation}
In particular, the involution $E$ conjugates $G,D$, and $\widetilde G,\widetilde D$.

Given an endomorphism $f$ of $F(A)$, its {\it abelianization} is the matrix 
$$
M(f)=\left( \begin{array} {cc}|f(a)|_a&|f(b)|_a\\|f(a)|_b&|f(b)|_b \end{array} \right).
$$ 
This function is multiplicative: $M(f'\circ f)=M(f')M(f)$, for any other endomorphism $f'$.
One has for any element $g\in F(A)$,
\begin{equation}\label{calcul}
\left( \begin{array} {cc}|f(g)|_a\\|f(g)|_b \end{array} \right)=M(f) %\left( \begin{array} {cc}|f(a)|_a&|f(b)|_a\\|f(a)|_b&|f(b)|_b \end{array} \right) 
\left( \begin{array} {cc}|g|_a\\|g|_b \end{array} \right).
\end{equation}
Observe that the abelianization of the endomorphism $(a^iba^j,a)$ is given by
\begin{equation}\label{abel}
M((a^iba^j,a))=P(i+j), 
\end{equation}
where $P$ is defined in Section \ref{sec-cont}.

For $g\in G$ ($G$ is here a group), we denote by $\gamma(g)$ the conjugation by $g$: $$\gamma(g)(x)=gxg^{-1}.$$ One has $\gamma(gh)=\gamma(g)\circ\gamma(h)$. For later use, we state the following lemma (which is related to the well-known result that the subgroup of inner automorphisms of $G$ is a normal subgroup of the group of all automorphisms of $G$).

\begin{lemma}\label{phipsi} Let $\varphi_i,\psi_i$, $i=1,\ldots,m$, be automorphisms of a group $G$ and $g_1,\ldots,g_m\in G$ be such that $\varphi_i=\gamma(g_i)\psi_i$. Then
$$
\varphi_1\cdots\varphi_m=\gamma(g)\psi_1\cdots\psi_m,
$$
where 
$$
g=g_1\psi_1(g_2)\cdots(\psi_1\cdots\psi_{m-1})(g_m).
$$
\end{lemma}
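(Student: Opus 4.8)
The plan is to prove the identity $\varphi_1\cdots\varphi_m=\gamma(g)\psi_1\cdots\psi_m$ by induction on $m$, with the base case $m=1$ being exactly the hypothesis $\varphi_1=\gamma(g_1)\psi_1$ together with $g=g_1$. The key algebraic fact I would isolate first is how conjugation interacts with an arbitrary automorphism: for any automorphism $\psi$ of $G$ and any $h\in G$, one has the conjugation rule
\begin{equation}\label{conjrule}
\psi\circ\gamma(h)=\gamma(\psi(h))\circ\psi.
\end{equation}
This is a direct computation: for any $x$, $\psi(\gamma(h)(x))=\psi(hxh^{-1})=\psi(h)\psi(x)\psi(h)^{-1}=\gamma(\psi(h))(\psi(x))$, using that $\psi$ is a homomorphism and $\psi(h^{-1})=\psi(h)^{-1}$. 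This commutation rule, which says inner automorphisms are normal in the automorphism group (exactly as the lemma's parenthetical remark indicates), is the engine that lets me slide $\gamma$-factors leftward past the $\psi$-factors.

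For the inductive step, I would assume the formula holds for $m-1$, so that
$$
\varphi_1\cdots\varphi_{m-1}=\gamma(g')\psi_1\cdots\psi_{m-1},\qquad g'=g_1\psi_1(g_2)\cdots(\psi_1\cdots\psi_{m-2})(g_{m-1}).
$$
Then I compute
$$
\varphi_1\cdots\varphi_m=\gamma(g')\,(\psi_1\cdots\psi_{m-1})\,\gamma(g_m)\,\psi_m.
$$
Now I apply the commutation rule \eqref{conjrule} repeatedly (or in one stroke, since the composite $\psi_1\cdots\psi_{m-1}$ is itself an automorphism) to move the inner automorphism $\gamma(g_m)$ to the left of $\psi_1\cdots\psi_{m-1}$: this turns it into $\gamma\big((\psi_1\cdots\psi_{m-1})(g_m)\big)$. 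Using $\gamma(h_1)\gamma(h_2)=\gamma(h_1h_2)$, the two leftmost conjugation factors combine into $\gamma\big(g'\cdot(\psi_1\cdots\psi_{m-1})(g_m)\big)$, and the factor in parentheses is precisely the claimed $g$. This yields $\gamma(g)\psi_1\cdots\psi_m$, completing the induction.

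I do not expect a genuine obstacle here, since the result is elementary group theory; the only thing requiring care is getting the commutation rule \eqref{conjrule} oriented correctly (it is $\psi\gamma(h)=\gamma(\psi(h))\psi$, not $\gamma(\psi^{-1}(h))$), and then making sure the partial composite that multiplies $g_m$ in the final formula is $\psi_1\cdots\psi_{m-1}$ rather than an off-by-one index. A clean alternative avoiding induction would be to write each $\varphi_i=\gamma(g_i)\psi_i$, expand the product, and push every $\gamma$-factor all the way to the left past the $\psi$'s standing before it; the $i$-th factor $\gamma(g_i)$ must traverse $\psi_1,\ldots,\psi_{i-1}$, acquiring exactly the prefactor $\psi_1\cdots\psi_{i-1}$, which reproduces the stated expression for $g$ term by term. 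Either route reduces to the same bookkeeping, and the inductive version is the tidiest to write.
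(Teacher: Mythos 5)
Your proof is correct, and it follows exactly the route the paper prescribes: the paper leaves the argument to the reader, stating only that it proceeds by induction on $m$, which is what you carry out, using the standard commutation rule $\psi\circ\gamma(h)=\gamma(\psi(h))\circ\psi$ (the normality of inner automorphisms alluded to in the lemma's parenthetical remark). The base case, the orientation of the commutation rule, and the indexing of the partial composite $\psi_1\cdots\psi_{m-1}$ acting on $g_m$ are all handled correctly.
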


The proof, by induction on $m$, is left to the reader.

\section{Christoffel words}\label{Cword0}

Among many equivalent definitions of Christoffel words, we choose one that is useful for our purpose. A {\it lower} (resp. {\it upper}) {\it Christoffel word} is the 
image of 
$a$ or $b$ under an endomorphism of $A^*$ belonging to the monoid of endomorphisms generated by $G$ and $\widetilde D$ (resp. $\widetilde G$ and $D$). A {\it Christoffel word} is 
a lower or an upper Christoffel word. It follows from these definitions that the 
endomorphisms 
$G,\tilde D$ (resp. $\tilde G,D$) preserve lower (resp. upper) Christoffel words.

The conjugation class of some Christoffel word is called a {\it Christoffel class}. It is known that in each Christoffel class, there is exactly one lower, and one upper, Christoffel word. See \cite{L,R} for this and other properties of these words.

Since the involution $E$ exchanges $a$ and $b$, and conjugates $G$ and $D$ (resp. $\widetilde G$ and $\widetilde D$), it exchanges lower and upper Christoffel words.

We define two rational numbers associated to a word $w$. We call {\it Slope} of $w$ the ratio 
$|w|_b/|w|$, 
and {\it slope} of $w$ the ratio $|w|_b/|w|_a$ (it is infinite if 
$w=b$). It follows from the general theory of Christoffel words that for each $s$ 
in $\mathbb Q_+\cup\infty$ (resp. each $S$ in  $[0,1]$), there exists a unique lower (resp. upper) Christoffel 
word of slope $s$ (resp. of Slope $S$); for $s\neq 0,\infty$ (resp. $S\neq 0,1$), these two Christoffel words are distinct and conjugate. 

Denoting by $S$ and $s$ the Slope and the slope respectively, one has
$$
S=\frac{s}{1+s}, \,\, s=\frac{S}{1-S}.
$$
Equivalently, $S^{-1}=1+s^{-1}$. We have $S=0$ if and only if $s=0$, and $S=1$ if and only if $s=\infty$. Otherwise, $0<S<1$, and the continued fraction of $S$ is of the form $[0,a_1,\ldots,a_m]$, where the $a_i$ are positive integers. Then $s^{-1}=S^{-1}-1=[a_1,\ldots,a_m]-1=[a_1-1,a_2,\ldots,a_m]$ if $a_1\geq 2$, and therefore $s=[0,a_1-1,a_2,\ldots,a_m]$; and if $a_1=1$, we have $s^{-1}=[0,a_2,\ldots,a_m]$ hence $s=[a_2,\ldots,a_m]$.

\section{Construction of the conjugates of a Christoffel word}\label{construct}

We fix 
a sequence $a_1,\ldots,a_m$ of positive integers and define $b_i,q_i$ as in Section \ref{Onumer}.

Following \cite{BL}, 
given a sequence of integers $d_1,\ldots,d_m$ in $\Z$, 
we define the following sequence $V_i=V_i(d_1,\ldots,d_m)$, of elements of $F(A)$, by
$$
V_{-1}=b,V_0=a,
$$
and, for $i=1,\ldots,m$, 
\begin{equation}\label{eq}
V_i=V_{i-1}^{b_i-d_i}V_{i-2}V_{i-1}^{d_i}.
\end{equation}

Note that we do not ask for the moment that the $d_i$ be nonnegative. This implies that the exponents in the previous equations may be negative, and the $V_i$ may be in $F(A)\setminus A^*$.

It is useful to note that one has the following {\it stability property}: $V_i(d_1,\ldots,d_m)$ depends only on $a_1,\ldots,a_i$ and on $d_1,\ldots, d_i$. Note that the lengths of
the words $V_i$, $i\geq 1$, are strictly increasing; the lengths of $V_{-1}=b$ and $V_0=a$ are 1, and the length of $V_1$ is 1 exactly when $a_1=1$, in which case $V_1=b$; if $a_1>1$, then $|V_0|=1<|V_1|$.

\begin{lemma}\label{endoV} With the previous definition, for any $i=0,\ldots,m$, the endomorphism $(V_i,V_{i-1})$ is equal to $$
\pi(b_1-d_1,d_1)\circ\cdots\circ\pi(b_i-d_i,d_i). 
$$
In particular, the words $V_{i-1}$ and $V_i$ form the basis of a free submonoid of $\{a,b\}^*$.
\end{lemma}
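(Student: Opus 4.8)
The plan is to establish the displayed identity by a direct induction on $i$, and then to read off the ``in particular'' clause from the fact that each $\pi$ is an automorphism of $F(A)$.

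For the base case $i=0$ the left-hand side is $(V_0,V_{-1})=(a,b)$, the identity endomorphism, and the right-hand side is the empty composition, again the identity. For the inductive step I would suppose that $(V_{i-1},V_{i-2})=\pi(b_1-d_1,d_1)\circ\cdots\circ\pi(b_{i-1}-d_{i-1},d_{i-1})$, abbreviate this endomorphism by $f$, and compose it on the right with $\pi(b_i-d_i,d_i)=(a^{b_i-d_i}ba^{d_i},a)$. Since $f(a)=V_{i-1}$ and $f(b)=V_{i-2}$, and $f$ is a homomorphism (so that $f(a^k)=V_{i-1}^{\,k}$ for every integer $k$), one computes
$$ f\bigl(a^{b_i-d_i}ba^{d_i}\bigr)=V_{i-1}^{\,b_i-d_i}V_{i-2}V_{i-1}^{\,d_i}=V_i, \qquad f(a)=V_{i-1}, $$
where the middle equality is exactly the defining recursion \eqref{eq}. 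Hence $f\circ\pi(b_i-d_i,d_i)=(V_i,V_{i-1})$, which together with the induction hypothesis yields the claim. The one delicate point is the order of composition: the freshly appended factor $\pi(b_i-d_i,d_i)$ must act first, so that its value $a^{b_i-d_i}ba^{d_i}$ is substituted into $f$; composing the factors in the opposite order would not reproduce \eqref{eq}.

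For the final assertion, I would restrict to the legal case $0\le d_j\le b_j$, in which the exponents $b_j-d_j$ and $d_j$ are nonnegative; each factor $\pi(b_j-d_j,d_j)$ is then of the form $\pi(\cdot,\cdot)$ with nonnegative arguments, hence maps $A^*$ into $A^*$ and is an automorphism of $F(A)$ (as noted in Section \ref{morph}). Thus $V_{i-1},V_i\in A^*$, and the composition $(V_i,V_{i-1})$, being a composition of automorphisms of $F(A)$, is itself an automorphism of $F(A)$, in particular injective. Injectivity of the resulting monoid endomorphism of $A^*$ is precisely the statement that $\{V_{i-1},V_i\}$ is a \emph{code}, that is, freely generates a free submonoid of $\{a,b\}^*$.

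I do not expect a serious obstacle here: the core is the one-line substitution of $a^{b_i-d_i}ba^{d_i}$ into $f$ against \eqref{eq}, and the only thing demanding attention is to secure $V_{i-1},V_i\in A^*$ (through legality) before invoking injectivity to obtain the code property.
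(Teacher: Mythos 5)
Your proof is correct and follows essentially the same route as the paper's: the same induction, with the inductive step $(V_i,V_{i-1})=(V_{i-1},V_{i-2})\circ\pi(b_i-d_i,d_i)$ obtained by substituting $a^{b_i-d_i}ba^{d_i}$ into the inductive morphism via \eqref{eq}, and the same derivation of the code property from injectivity of the $\pi$'s, which extend to automorphisms of $F(A)$. Your explicit restriction to legal digits in the final clause merely makes precise what the paper leaves implicit, namely that $V_{i-1},V_i$ must lie in $A^*$ for the free-submonoid statement to make sense.
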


\begin{proof} 
The morphism
$(V_0,V_{-1})=(a,b)$ is the identity morphism, so that the formula is true for $i=0$. Let $i\geq 1$; then $$(V_i,V_{i-1})=(V_{i-1}^{b_i-d_i}V_{i-2}V_{i-1}^{d_i},V_{i-1})=
(V_{i-1},V_{i-2})\circ (a^{b_i-d_i}ba^{d_i},a)$$$$=(V_{i-1},V_{i-2})\circ \pi(b_i-d_i,d_i).$$ By induction on $i$, we have $$(V_{i-1},V_{i-2})=\pi(b_1-d_1,d_1)\circ\cdots\circ\pi(b_{i-1}-d_{i-1},d_{i-1}).$$ Thus the first assertion of the lemma follows.

The last one
follows from the injectivity of the morphisms $\pi(i,j)$ (because they extend to automorphisms of the free group), hence of their product.
\end{proof}

\begin{lemma}\label{length} Let $V_m=V_m(d_1,\ldots,d_m)$.

(i) $|V_m|_a=K(a_1-1,a_2,\ldots,a_m)$, $|V_m|_b=K(a_2,\ldots,a_m)$, $|V_m|=K(a_1,\ldots,a_m)$.

(ii) The Slope of $V_m$ is $[0,a_1,\ldots,a_m]$.
\end{lemma}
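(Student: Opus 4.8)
The plan is to deduce everything from Lemma~\ref{endoV}, which identifies the pair $(V_m,V_{m-1})$ with the composite morphism $f=\pi(b_1-d_1,d_1)\circ\cdots\circ\pi(b_m-d_m,d_m)$. Since $V_m=f(a)$ and $V_{m-1}=f(b)$, the letter-counts $|V_m|_a,|V_m|_b$ are read off from the first column of the abelianization matrix $M(f)$, via the identity \eqref{calcul} applied to $g=a$ (whose abelianized vector is $(1,0)^{t}$). So the whole of part~(i) reduces to computing $M(f)$.

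First I would use multiplicativity $M(f'\circ f)=M(f')M(f)$ together with \eqref{abel}, which gives $M(\pi(i,j))=M((a^iba^j,a))=P(i+j)$. Applying this to each factor, $M(\pi(b_k-d_k,d_k))=P((b_k-d_k)+d_k)=P(b_k)$. The dependence on the digits $d_k$ cancels, which is exactly the point: the abelianization — hence all three length statistics — is independent of the chosen representation. Therefore
\begin{equation*}
M(f)=P(b_1)P(b_2)\cdots P(b_m)=P(a_1-1)P(a_2)\cdots P(a_m),
\end{equation*}
using $b_1=a_1-1$ and $b_k=a_k$ for $k\geq2$. Now I invoke the matrix formula \eqref{continuant-matrice} for the product $P(a_1-1)P(a_2)\cdots P(a_m)$, whose top-left entry is $K(a_1-1,a_2,\ldots,a_m)$ and bottom-left entry is $K(a_2,a_3,\ldots,a_m)$. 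Reading off the first column gives $|V_m|_a=K(a_1-1,a_2,\ldots,a_m)$ and $|V_m|_b=K(a_2,\ldots,a_m)$, the first two claims of~(i).

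For the total length $|V_m|=|V_m|_a+|V_m|_b$ I would apply identity~\eqref{n-1} with $n_1=a_1$, which states precisely $K(a_1,\ldots,a_m)=K(a_1-1,a_2,\ldots,a_m)+K(a_2,\ldots,a_m)$, yielding $|V_m|=K(a_1,\ldots,a_m)$. Finally, part~(ii) is immediate: the Slope is $|V_m|_b/|V_m|=K(a_2,\ldots,a_m)/K(a_1,\ldots,a_m)$, and by the continued-fraction dictionary recalled at the end of Section~\ref{sec-cont} this ratio equals $[0,a_1,\ldots,a_m]$. I anticipate no serious obstacle here; the only thing to be careful about is the bookkeeping in matching \eqref{continuant-matrice} to the arguments $(a_1-1,a_2,\ldots,a_m)$ and confirming that the $d_i$ genuinely disappear after abelianization, which is the conceptual heart of the lemma and the reason the three statistics depend only on the $a_i$.
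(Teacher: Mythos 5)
Your proposal is correct and follows essentially the same route as the paper's own proof: both reduce the lemma to Lemma~\ref{endoV}, abelianize via \eqref{abel} and \eqref{calcul} so that each factor contributes $P(b_k)$ (the digits $d_k$ cancelling), read off $K(b_1,\ldots,b_m)$ and $K(b_2,\ldots,b_m)$ from \eqref{continuant-matrice}, and finish with \eqref{n-1} and the continued-fraction identity. Your explicit remark that the disappearance of the $d_i$ under abelianization is the conceptual heart of the statement is a fair gloss on what the paper leaves implicit.
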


\begin{proof} We have, by Lemma \ref{endoV}, $$V_m=(V_m,V_{m-1})(a)=\pi(b_1-d_1,d_1)\circ\cdots\circ\pi(b_m-d_m,d_m)(a).$$ It follows from Section \ref{morph} that 
$$\left( \begin{array} {cc}|V_m|_a\\|V_m|_b \end{array} \right)=P(b_1)\cdots P(b_m)
\left( \begin{array} {cc}1\\0 \end{array} \right).$$ Thus by (\ref{continuant-matrice}) $|V_m|_a=K(b_1,\ldots,b_m)$ and $|V_m|_b=K(b_2,\ldots,b_m)$. We have $b_i=a_i$, except for $i=1$, where $b_1=a_1-1$. Thus (i) follows, using (\ref{n-1}) for the third formula, and (ii) follows at once.
%
%Therefore, by (\ref{continuant-matrice}), the Slope $S$ of $V_m$ is $p(b_2,\ldots,b_m)/(p(b_1,\ldots,b_m)+p(b_2,\ldots,b_m))$. We have $b_i=a_i$, except for $i=1$, where $b_1=a_1-1$. Thus by (\ref{n-1}) $p(b_1,\ldots,b_m)=p(a_1,\ldots,a_m)-p(a_2,\ldots,a_m)$. It follows that $S=p(a_2,\ldots,a_m)/p(a_1,\ldots,a_m)=[0,a_1,\ldots,a_m]$.
\end{proof}

If the sequence $d_1,\ldots,d_m$ satisfies the inequalities (\ref{ineq}), then the exponents in (\ref{eq}) are all nonnegative, therefore $V_i(d_1,\ldots,d_m)\in A^*$. 
Define 
\begin{equation}\label{Mm}
M_m= V_m(0,\ldots,0)=\pi(b_1,0)\circ\cdots\circ\pi(b_m,0)(a),
\end{equation} 
the second equality holding by Lemma \ref{endoV}. Then $M_m\in A^*$, and $M_m$ is of length $q_m=K(a_1,\ldots,a_m)$, by Lemma \ref{length}.

\begin{theorem}\label{Vm} The element $V_m=V_m(d_1,...,d_m)$ is conjugate within $F(A)$ to $M_m$. Precisely, $V_m=h^{-1}M_mh$ for some $h\in F(A)$ of algebraic length $N=d_1q_0+
\cdots+d_mq_{m-1}$.

%In particular, if the sequence satisfies the inequalities (\ref{ineq}), then $V_m$ depends only on $N$. 
The $A^*$-conjugation class of $M_m$ is equal to the set of all $V_m(d_1,\ldots,d_m)$, for all sequences $d_1,\ldots,d_m$ satisfying (\ref{ineq}) and precisely $V_m=C^N(M_m)$, 
with $N$ as above.
%; one has $0\leq N\leq p(a_1,...,a_m)-1$ and $p(a_1,\ldots,a_m)$ is the length of $V_m$. 
This class contains the two Christoffel words of Slope $S=[0,a_1,\ldots,a_m]$.
\end{theorem}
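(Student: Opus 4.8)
The plan is to prove the three assertions of Theorem~\ref{Vm} in sequence, building on Lemma~\ref{endoV}, Lemma~\ref{phipsi}, and Lemma~\ref{conjug}. The conceptual heart of the matter is that the recursion \eqref{eq} differs from the ``straight'' recursion defining $M_m$ only by conjugations, and Lemma~\ref{phipsi} is precisely the tool that collects these conjugations into a single global conjugator while leaving the underlying product of morphisms unchanged.

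First I would compare the two morphisms $\pi(b_i-d_i,d_i)$ and $\pi(b_i,0)$ building $(V_i,V_{i-1})$ and $(M_i,M_{i-1})$ respectively. Using $\pi(i,j)=(a^iba^j,a)$ one checks that $\pi(b_i-d_i,d_i)=\gamma(g_i)\circ\pi(b_i,0)$ for a suitable inner automorphism $\gamma(g_i)$; concretely $g_i$ should be a power of the image of $a$, reflecting that pushing the block $a^{d_i}$ from the right of $b$ to its left is conjugation by $a^{d_i}$ (transported through the morphism built so far). The key bookkeeping is to identify $g_i$ so that the length formula in Lemma~\ref{phipsi} produces exactly $N=\sum d_j q_{j-1}$. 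Since $M(\pi(b,0))=P(b)$, the abelianization $M(\pi(b_1,0)\circ\cdots\circ\pi(b_{i-1},0))$ applied to the generator $a$ has first coordinate $K(b_1,\ldots,b_{i-1})=q_{i-1}$ (as in the proof of Lemma~\ref{length}), so that the algebraic length of the $i$-th contribution $(\psi_1\cdots\psi_{i-1})(g_i)$ is $d_i q_{i-1}$. Summing gives $|h|=\sum_{i=1}^m d_i q_{i-1}=N$, as claimed. This establishes $V_m=h^{-1}M_mh$ with $|h|=N$, hence the first assertion, and by Lemma~\ref{conjug} it gives $V_m=C^{|h|}(M_m)=C^N(M_m)$.

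Next, for the second assertion I would argue that when the $d_i$ satisfy \eqref{ineq} all exponents in \eqref{eq} are nonnegative, so $V_m\in A^*$ and the displayed identity $V_m=C^N(M_m)$ realizes $V_m$ as an honest $A^*$-conjugate of $M_m$. To see that \emph{every} conjugate is obtained, I would count: as $(d_1,\ldots,d_m)$ ranges over greedy representations it ranges over the residues $N=0,1,\ldots,q_m-1$ by Proposition~\ref{greedylazy}(i), and since $C^{q_m}$ is the identity on words of length $q_m=|M_m|$, the map $N\mapsto C^N(M_m)$ already surjects onto the whole conjugation class using only these $N$. Thus the set of all $V_m$ with $d_i$ legal contains the full class; conversely each such $V_m$ lies in the class, so equality holds.

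Finally, the third assertion follows from Lemma~\ref{length}(ii), which gives that the Slope of $M_m$ (and of every conjugate, Slope being a conjugation invariant) is $[0,a_1,\ldots,a_m]$. By the theory recalled in Section~\ref{Cword0}, the lower and upper Christoffel words of this Slope are conjugate to one another, and since $M_m=V_m(0,\ldots,0)$ is a word of the correct Slope, its conjugation class is the Christoffel class of Slope $S=[0,a_1,\ldots,a_m]$; hence it contains both Christoffel words. The step I expect to be the main obstacle is the precise determination of the inner factors $g_i$ and the verification that Lemma~\ref{phipsi}'s length formula collapses to $\sum d_i q_{i-1}$; the difficulty is purely in tracking how each local conjugation by a power of $a$ is transported through the accumulated morphism $\psi_1\cdots\psi_{i-1}$ and in confirming that its algebraic length is governed by the continuant $q_{i-1}$ rather than by the full abelianization.
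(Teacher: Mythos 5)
Your treatment of the first two assertions follows essentially the paper's own route: you factor $\pi(b_i-d_i,d_i)=\gamma(a^{-d_i})\circ\pi(b_i,0)$, collect the inner automorphisms with Lemma~\ref{phipsi}, compute the algebraic length of the resulting conjugator by abelianization, and obtain surjectivity onto the conjugation class from the greedy representations of $N=0,\ldots,q_m-1$ together with Lemma~\ref{conjug}. One slip in the bookkeeping: the algebraic length of $(\psi_1\cdots\psi_{i-1})(a)$ is \emph{not} the first coordinate $K(b_1,\ldots,b_{i-1})$ of the abelianized vector (that coordinate is the $a$-count $|M_{i-1}|_a$), and $K(b_1,\ldots,b_{i-1})\neq q_{i-1}$ in general, since $b_1=a_1-1$. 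The length is the sum of the two coordinates, $K(b_1,\ldots,b_{i-1})+K(b_2,\ldots,b_{i-1})$, which equals $K(a_1,\ldots,a_{i-1})=q_{i-1}$ by identity \eqref{n-1} --- or note directly that $(\psi_1\cdots\psi_{i-1})(a)=M_{i-1}$, a word of length $q_{i-1}$ by Lemma~\ref{length}. With that repaired, your conclusion $|h|=N$ is correct. (A small ordering point: the appeal to Lemma~\ref{conjug} for $V_m=C^N(M_m)$ belongs after imposing \eqref{ineq}, since that lemma requires $u,v\in A^*$, and for arbitrary integer digits $V_m$ may lie outside $A^*$.)

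The third assertion, however, contains a genuine gap. You argue that since $M_m$ has Slope $[0,a_1,\ldots,a_m]$ (Lemma~\ref{length}(ii)), ``its conjugation class is the Christoffel class of Slope $S$.'' Slope is a conjugacy invariant, but it is far from a complete one: having the right letter counts does not place a word in a Christoffel class. For instance, $aaabb$ has Slope $2/5=[0,2,2]$, yet it is not conjugate to the Christoffel word $aabab$, whose class is $\{aabab,ababa,babaa,abaab,baaba\}$. So the inference fails, and exhibiting an actual Christoffel word among the $V_m(d_1,\ldots,d_m)$ is precisely the content of this assertion that must be proved. The paper does this by taking the alternating digits $d_m=b_m$, $d_{m-1}=0$, $d_{m-2}=b_{m-2},\ldots$, rewriting the corresponding composition via \eqref{piGE} as $G^{b_1}EE\widetilde D^{b_2}\cdots EE\widetilde D^{b_m}(a)$ when $m$ is even (resp.\ $\widetilde G^{b_1}EED^{b_2}\cdots \widetilde G^{b_m}E(a)$ when $m$ is odd), and invoking that $G,\widetilde D$ preserve lower, and $D,\widetilde G$ preserve upper, Christoffel words; an acceptable substitute would be to show that $M_m$ itself is a standard word, as in Corollary~\ref{standard}, which is proved by the same \eqref{piGE} device. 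Some argument of this kind must be supplied: Lemma~\ref{length}(ii) only certifies that, \emph{once the class is known to be a Christoffel class}, it is the one of Slope $[0,a_1,\ldots,a_m]$.
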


It follows from this theorem that to each sequence $a_1,\ldots,a_m$ of positive integers, we associate a Christoffel class.

\begin{proof} By Lemma \ref{endoV}, $$V_m=(V_m,V_{m-1})(a)=\pi(b_1-d_1,d_1)\circ\cdots\circ\pi(b_m-d_m,d_m)(a).$$ We have $$\pi(i,j)=(a^iba^j,a)=\gamma(a^{-j})\circ (a^{i+j}
b,a)=\gamma(a^{-j})\circ \pi(i+j,0).$$ We apply Lemma \ref{phipsi} with $\varphi_i=\pi(b_i-d_i,d_i), \psi_i=\pi(b_i,0), g_i=a^{-d_i}$. We obtain that
$$V_m=\gamma(g)\circ\pi(b_1,0)\circ\cdots\circ\pi(b_m,0)(a)=\gamma(g)(M_m),$$ where $g$ is equal to
\begin{equation}\label{g}
a^{-d_1}(\pi(b_1,0)(a^{-d_2}))\cdots(\pi(b_1,0)\circ\cdots\circ\pi(b_{m-1},0)(a^{-d_m})).
\end{equation}
This implies that $V_m$ is conjugate within $F(A)$ to $M_m$. 

Let $h$ be the inverse of $g$. Then 
\begin{equation}\label{MmhVm}
V_m=h^{-1}M_mh
\end{equation} 
and, by (\ref{abel}) and (\ref{calcul}), the algebraic length of $h$ is equal to
$$
d_1+(1,1)P(b_1)\,{}^t\!(d_2,0)+\cdots+(1,1)P(b_1)\cdots P(b_{m-1})\,{}^t\!(d_m,0).
$$
By (\ref{continuant-matrice}), this is
$$
d_1+(K(b_1)+K())d_2+\cdots+(K(b_1,\ldots,b_{m-1})+K(b_2,\ldots,b_{m-1}))d_m$$$$=d_1q_0+d_2q_1+\cdots+d_mq_{m-1}=N, 
$$
by (\ref{n-1}), since $b_i=a_i$ if $i\geq 2$, and $b_1=a_1-1$.

If the sequence $d_1,\ldots,d_m$ satisfies (\ref{ineq}), then $V_m$ is in $A^*$, and by Lemma \ref{conjug}, $V_m=C^{N}(M_m)$, thus $V_m$ is in the conjugation class of $M_m$. 
Conversely, each element of this class appears, since $M_m$ is of length $K(a_1,\ldots,a_m)$ (Lemma \ref{length}), and since, by Proposition \ref{greedylazy}, each 
$N=0,\ldots,K(a_1,\ldots ,a_m)-1$ has an Ostrowski representation 
satisfying (\ref{ineq}).

We show now that the class contains a Christoffel word. Consider the sequence $d_1,\ldots, d_m$ defined by $d_m=b_m,d_{m-1}=0,d_{m-2}=b_{m-2}$, and so on, depending on 
the parity of $m$. The corresponding element $V_m$ is in 
$A^*$, and is equal to $\cdots \pi(0,b_{m-2})\circ\pi(b_{m-1},0)\circ\pi(0,b_m)(a)$. 
If $m$ is even, then, by (\ref{piGE}), we have 
$$V_m=G^{b_{1}}EE\widetilde D^{b_{2}}\cdots G^{b_{m-1}}EE\widetilde D^{b_m}(a);$$ since $E$ is an involution, since $a$ is a lower Christoffel word, and since $\widetilde D,G$ preserve lower Christoffel words, we obtain that $V_m$ 
is a lower Christoffel word.
If $m$ is odd, then similarly $$V_m=\widetilde G^{b_1}EED^{b_2}\cdots \widetilde G^{b_m}E(a);$$ since $E(a)=b$ is an upper Christoffel word, and since $D,\widetilde G$ preserve upper Christoffel words, we 
obtain that $V_m$ is an upper Christoffel word.
\end{proof}

\begin{corollary}\label{Cword} Let $N=\sum_{1\leq i\leq m}d_iq_{i-1}$ be a greedy representation. Then $V_m(d_1,\ldots,d_m)$ is a Christoffel word if and only if the sequence $d_1,\ldots,d_m$ is alternating. Said more precisely, for $m\geq 1$, the word
$V_m (b_1, 0, b_3, 0, \ldots)$ is an upper Christoffel word, and 
the word
$V_m (0, b_2, 0, b_4, \ldots)$ is a lower Christoffel word. 
\end{corollary}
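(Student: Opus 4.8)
The plan is to prove the two halves of the equivalence separately, extracting the precise identification (upper versus lower) along the way, and then to close the ``only if'' direction by a counting argument.

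First I would treat the two alternating sequences directly, extending the computation already made in the proof of Theorem~\ref{Vm}. By Lemma~\ref{endoV}, $V_m=\pi(b_1-d_1,d_1)\circ\cdots\circ\pi(b_m-d_m,d_m)(a)$. For the sequence $(b_1,0,b_3,0,\ldots)$ the factor at an odd index is $\pi(0,b_i)$ and the factor at an even index is $\pi(b_i,0)$; using the expressions $\pi(0,b_i)=\widetilde G^{b_i}E$ and $\pi(b_i,0)=ED^{b_i}$ from \eqref{piGE}, the two copies of $E$ straddling each odd--even boundary cancel, since $E$ is an involution. What survives is a product in $\widetilde G$ and $D$ only, applied to $a$ when $m$ is even and, because of a single surviving terminal $E$, to $E(a)=b$ when $m$ is odd. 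As $a$ and $b$ are upper Christoffel words and $\widetilde G,D$ preserve upper Christoffel words, $V_m(b_1,0,b_3,0,\ldots)$ is an upper Christoffel word. The mirror-image computation for $(0,b_2,0,b_4,\ldots)$, now writing $\pi(b_i,0)=G^{b_i}E$ and $\pi(0,b_i)=E\widetilde D^{b_i}$, yields a product in $G$ and $\widetilde D$ applied to $a$ or $b$, hence a lower Christoffel word. This establishes the ``Precisely'' assertion, and in particular the ``if'' direction, since both alternating sequences are legal and, as one checks at once against \eqref{greedy}, greedy.

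For the ``only if'' direction I would argue by counting. By Theorem~\ref{Vm} the $A^*$-conjugation class $\mathcal C$ of $M_m$ equals $\{C^N(M_m):0\le N\le q_m-1\}$; since $M_m$ is a conjugate of a Christoffel word it is primitive, so these $q_m$ conjugates are pairwise distinct. Together with the uniqueness of greedy representations (Proposition~\ref{greedylazy}(i)), this makes the assignment $(d_1,\ldots,d_m)\mapsto V_m(d_1,\ldots,d_m)$ a bijection from greedy representations onto $\mathcal C$. Now $\mathcal C$ contains exactly one lower and one upper Christoffel word, and every Christoffel word is one or the other, so these are the only Christoffel words in $\mathcal C$; by the first part they are $V_m(0,b_2,0,\ldots)$ and $V_m(b_1,0,b_3,\ldots)$, whose index sequences are exactly the two alternating sequences of length $m$. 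Hence a greedy representation produces a Christoffel word if and only if $V_m$ is one of these two, and by injectivity of the bijection, if and only if the sequence is alternating.

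I expect the only delicate point to be the bookkeeping in the first paragraph: tracking the pattern of $E$-cancellations and identifying the parity-dependent terminal factor ($a$ versus $b$), together with choosing, for each occurrence of $\pi(0,b_i)$ or $\pi(b_i,0)$, the expression from \eqref{piGE} that cancels all internal $E$'s and leaves only the generators $\widetilde G,D$ (resp.\ $G,\widetilde D$). Everything else reduces to Theorem~\ref{Vm}, the well-known primitivity of Christoffel words, and the uniqueness in Proposition~\ref{greedylazy}.
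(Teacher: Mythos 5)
Your proof is correct, and its skeleton coincides with the paper's: both arguments reduce the equivalence, via the bijection $N\mapsto V_m(d_1,\ldots,d_m)$ coming from Theorem~\ref{Vm} and Proposition~\ref{greedylazy}(i) together with the fact that a Christoffel class contains exactly one lower and one upper Christoffel word, to proving the ``Precisely'' assertion. Where you genuinely diverge is in how that assertion is established. The paper cites the endgame of the proof of Theorem~\ref{Vm} --- which, for each parity of $m$, treats only the alternating sequence ending in $d_m=b_m$ --- to conclude that the two words are Christoffel, and then decides which is lower and which is upper by an inductive first-letter analysis (if $V_i$ and $V_{i+1}$ begin with the same letter, so do all later $V_j$). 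You instead redo the $\pi$-factorization of Lemma~\ref{endoV} for both alternating sequences and both parities of $m$, choosing for each factor the expression from \eqref{piGE} that cancels every internal $E$; the surviving product lies in the monoid generated by $\widetilde G, D$ (resp.\ $G,\widetilde D$) applied to $a$ or to $E(a)=b$, and I have checked that your bookkeeping of the cancellations and of the parity-dependent terminal $E$ is right. This buys you two things: the upper/lower identification comes for free from which morphisms survive, with no first-letter induction, and the alternating sequence ending in $d_m=0$ is covered explicitly, whereas the paper's back-reference to Theorem~\ref{Vm} handles it only implicitly (the authors' computation there must be repeated mutatis mutandis for the other alternating sequence). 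The paper's route, in exchange, avoids redoing the computation. Your explicit remark that primitivity of $M_m$ is what makes the $q_m$ conjugates pairwise distinct, hence the parametrization injective, fills in a point the paper leaves tacit, and your observation that the alternating sequences satisfy \eqref{greedy} is the correct (easy) check needed to apply the bijection.
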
 

\begin{proof} We know that each conjugation class of Christoffel word contains exactly one lower, and one upper, Christoffel word. By Theorem \ref{Vm} and 
Proposition \ref{greedylazy} (i), the mapping $N\mapsto V_m(d_1,\ldots,d_m)$, where $N=\sum_{1\leq i\leq m}d_iq_{i-1}$ is the greedy representation of $N$, is a 
bijection from $\{0,1,\ldots,q_m-1\}$ onto the conjugation class of $M_m$.
Thus,
it is enough to 
show the last assertion. By the end of the proof of Theorem \ref{Vm}, the two indicated words are Christoffel words. Note that a Christoffel word, distinct from $a,b$ (which are both lower and upper), is a lower one if and only if it 
begins by $a$.
We observe that if $V_i$ and $V_{i+1}$ begin by some letter $x$, then so 
do all the words $V_j$ for $j\geq i$. 

Consider the alternating sequence beginning by $0$, namely: $d_1=0,d_2=b_2,\ldots$. Then $V_1=a^{b_1}b$ begins by $a$ if $a_1\geq 2$, and is equal to $b$ if $a_1=1$. Thus, if $a_1\geq 2$, 
then $V_0,V_1$ begin by $a$, hence also do all $V_i$, $i\geq 0$. If $a_1=1$, then $V_2=V_0V_1^{b_2}=ab^{b_2}$, and $V_3=V_2^{b_3}V_1$ both begin by $a$; hence $V_i$ 
begins by $a$ for $i\geq 2$.

Consider now the alternating sequence beginning by $b_1$, namely: $d_1=b_1,d_2=0,\ldots$. Then $V_1=ba^{b_1},V_2=V_1^{b_2}a$ both begin by $b$, and therefore all $V_i$, $i\geq 1$, begin by $b$.
\end{proof}

Throughout the paper ${\widetilde x}$ denotes the reversal (mirror image) of 
the word $x$.
A {\it palindrome} is a word equal to its reversal. The empty word is a palindrome.
Recall that each proper lower Christoffel word $w$ has the factorization $w=apb$, where $p$ is a palindrome (called a {\it central word}), and that then the 
corresponding upper Christoffel word is $\widetilde w=bpa$, which is a conjugate of $w$. A {\it standard word} is $a$ or $b$, or a word of the from $pab$ or $pba$ for some central word $p$; it is known that standard words are obtained from $a,b$ by applying the endomorphisms in the submonoid generated by $G$ and $D$; moreover, each Christoffel class contains exactly two standard words $pab$ and $pba$, where $p$ is the corresponding central palindrome. See \cite[Subsection 2.2.1]{L}, \cite{Re}.

\begin{corollary}\label{standard}
For $m \ge 1$, the word $M_m$ is a standard word, 
equal to $pab$ if $m$ is odd and to $pba$ if $m$ is even.    
\end{corollary}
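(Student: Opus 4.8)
The plan is to start from the expression $M_m=\pi(b_1,0)\circ\cdots\circ\pi(b_m,0)(a)$ of (\ref{Mm}) and to rewrite the composite using the two factorizations $\pi(i,0)=G^iE=ED^i$ from (\ref{piGE}). The idea is to substitute $G^{b_i}E$ for the factors of odd index and $ED^{b_i}$ for those of even index, so that every interior pair of $E$'s collides; since $E$ is an involution, each such pair $EE$ disappears, and the whole composite telescopes into a word involving only the generators $G$ and $D$.

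Carrying out this cancellation, I expect to obtain
$$
M_m=\begin{cases} G^{b_1}D^{b_2}\cdots G^{b_{m-1}}D^{b_m}(a) & m \text{ even},\\ G^{b_1}D^{b_2}\cdots D^{b_{m-1}}G^{b_m}\,E(a) & m \text{ odd},\end{cases}
$$
and in the odd case I would finish by using $E(a)=b$. In either case $M_m$ is the image of $a$ or of $b$ under an endomorphism in the submonoid generated by $G$ and $D$, so by the characterization of standard words recalled above, $M_m$ is a standard word. This settles the first assertion.

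It then remains to decide between the two shapes $pab$ and $pba$, and for this I would look only at the last letter. A direct check on generators shows that both $G$ and $D$ send a word ending in $a$ to a word ending in $a$, and a word ending in $b$ to a word ending in $b$ (indeed $G(a)=a$, $G(b)=ab$, $D(a)=ba$, $D(b)=b$); hence any composite of $G$'s and $D$'s preserves the last letter. For $m$ even $M_m$ is the image of $a$ and thus ends in $a$, while for $m$ odd it is the image of $b$ and thus ends in $b$. Since a standard word of length at least $2$ ending in $a$ (resp. $b$) is necessarily of the form $pba$ (resp. $pab$), the claim follows---the only length-one instance being $M_1=b$ when $a_1=1$, which is consistent with the odd case.

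The computations are all routine, so the only real point requiring care is the bookkeeping of the telescoping: one must track the parity of the indices correctly so that every interior $E$ is cancelled and the correct terminal letter ($a$ for even $m$, $b$ for odd $m$) is exposed. Verifying the last-letter invariance of $G$ and $D$, together with the short degenerate cases ($|M_m|\le 1$, i.e.\ $M_1=b$), is the other small thing to confirm, but neither presents a genuine obstacle.
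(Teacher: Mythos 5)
Your proof is correct and takes essentially the same route as the paper's: the identical telescoping of $\pi(b_1,0)\circ\cdots\circ\pi(b_m,0)$ via $\pi(i,0)=G^iE=ED^i$ with cancellation of the interior $E$'s, followed by the characterization of standard words as images of $a$ and $b$ under the submonoid generated by $G$ and $D$. Your last-letter step (observing that $G$ and $D$ preserve final letters) is just an explicit version of the paper's ``easy induction,'' and you even flag the degenerate case $M_1=b$ when $a_1=1$, which the paper passes over silently.
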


\begin{proof} An easy induction shows that 
$$M_m=\pi(b_1,0)\circ\cdots\circ\pi(b_m,0)(a)$$ ends 
by $b$ if $m$ is odd, and by $a$ if $m$ is even. 

Suppose that $m$ is even. Then by (\ref{piGE}),
$$M_m=\pi(b_1,0)\circ\cdots\circ\pi(b_m,0)(a)=G^{b_1}EED^{b_2}\cdots ED^{b_m}(a).$$ Hence $M_m$ is a standard word. Since it ends by 
$a$, we have $M_m=pba$. 

Suppose that $m$ is odd. Then $$M_m=\pi(b_1,0)\circ\cdots\circ\pi(b_m,0)(a)=G^{b_1}EED^{b_2}\cdots ED^{b_{m-1}}G^{b_m}E(a).$$ Hence $M_m$ is a standard 
word. Since it ends by $b$, we have $M_m=pab$. 
\end{proof}

%\CR{
%
%% Je mets \widetilde, je prefere ! 
%
%\begin{corollary} Let $N=\sum_{1\leq i\leq m} d_i q_{i-1}$   
%be a legal representation of $N\in \{0,\ldots,q_m-1\}$. Let $u=V_m(d_1,\ldots,d_m)$ be the corresponding conjugate of the lower 
%Christoffel word $w$ of Slope $[0,a_1,\ldots,a_m]$. 
%
%(i) If $m$ is odd and $u\neq \widetilde w$, then $N$ is the length of $p_1$, with the notation 
%of (\ref{conjug2}). If $u=\widetilde w$, then $N=q_m-1$. 
%
%(ii) If $m$ is even and $u\neq w$, then $N$ is the length of $p_1$, with the notation of (\ref{conjug1}). 
%If $u=w$, then $N=q_m-1$. 
%\end{corollary}
%
%\begin{proof}
%By Theorem \ref{Vm}, we know that $V_m=C^N(M_m)$. Recall that $N=0,\ldots,q_m-1$, and that $q_m$ is the length of $V_m$. We use these facts below.
% 
%(i) Suppose that $m$ is even. Suppose that $V_m\neq  w$. Then we may write $V_m=p_2bap_1$ and clearly $V_m=C^{|p_1|}(pba)$. Since by Corollary \ref{standard}, $M_m=pba$,  
%we have $V_m=C^N(pba)$, hence $N=|p_1|$. Note that $|p_1|<q_m-1$, so that the missing conjugate must be $C^{q_m-1}=w$.
%
%(ii) Suppose that $m$ is odd. Suppose that $V_m\neq  \widetilde w$. Then we may write $V_m=p_2abp_1$ and clearly $V_m=C^{|p_1|}(pab)$. Since by Corollary \ref{standard}, $M_m=pab$,  
%we have $V_m=C^N(pab)$, hence $N=|p_1|$. Note that $|p_1|<q_m-1$, so that the missing conjugate must be $C^{q_m-1}=\widetilde w$.
%\end{proof}
%
%}

We may derive a result, which is equivalent to a result previously obtained by Frid, \cite{Fi} Corollary 1, and which is a noncommutative version of the Ostrowski representation.

We consider below the Christoffel class associated to the sequence $a_1,\ldots,a_m$.

\begin{corollary}\label{frid} Let $N= 0, \ldots , q_m-2$ be an integer  
whose legal Ostrowski representation is given by $N=\sum_{1\leq i\leq m}d_iq_{i-1}$. 
Then the prefix of length $N$ of the central palindrome $p$ is 
$$
M_{m-1}^{d_m}\cdots M_0^{d_1}.
$$
In particular this product depends only on $N$ and not on the chosen legal Ostrowski representation of $N$.
\end{corollary}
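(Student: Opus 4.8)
The plan is to show that the conjugating element $h$ that already appears in Theorem~\ref{Vm} is \emph{literally} the word in the statement, and then to identify that element with the required prefix. First I would reduce everything to $M_m$: since $N\le q_m-2=|p|$ and, by Corollary~\ref{standard}, $p$ is a prefix of $M_m$, the prefix of length $N$ of $p$ coincides with the prefix of length $N$ of $M_m$. So it suffices to prove that $M_{m-1}^{d_m}\cdots M_0^{d_1}$ is the length-$N$ prefix of $M_m$.

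The key computation is to unwind the element $g$ of \eqref{g} from the proof of Theorem~\ref{Vm}. By Lemma~\ref{endoV} applied with all digits zero, the composite $\pi(b_1,0)\circ\cdots\circ\pi(b_{i-1},0)$ is the endomorphism $(M_{i-1},M_{i-2})$ sending $a\mapsto M_{i-1}$, so its value at $a^{-d_i}$ is $M_{i-1}^{-d_i}$. Substituting into \eqref{g} gives
$$
g=M_0^{-d_1}M_1^{-d_2}\cdots M_{m-1}^{-d_m},
\qquad\text{hence}\qquad
h=g^{-1}=M_{m-1}^{d_m}\cdots M_1^{d_2}M_0^{d_1},
$$
which is exactly the claimed product. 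Because the representation is legal, every $d_i\ge 0$, so $h$ is a genuine word of $A^*$, and $|h|=N$ by Theorem~\ref{Vm}.

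It remains to see that this $h$ is the length-$N$ prefix of $M_m$. From \eqref{MmhVm} we have $V_m=h^{-1}M_mh$ with $V_m,M_m\in A^*$ (legality also guarantees $V_m\in A^*$, as noted in the proof of Theorem~\ref{Vm}), so Lemma~\ref{conjug} applies. Since $N<q_m=|M_m|$, the Euclidean remainder is $N$ itself, and we may write $M_m=xy$, $V_m=yx$ with $|x|=N$; thus $x$ is the length-$N$ prefix of $M_m$ and $V_m=x^{-1}M_mx$. Both $h$ and $x$ therefore conjugate $M_m$ to $V_m$, so $xh^{-1}$ commutes with $M_m$. Here one uses that a Christoffel word, hence the standard word $M_m$, is primitive, so its centralizer in $F(A)$ is the infinite cyclic group $\langle M_m\rangle$; thus $xh^{-1}=M_m^k$ for some $k\in\Z$, and since $xh^{-1}$ has algebraic length $|x|-|h|=0=kq_m$ we get $k=0$, i.e.\ $h=x$. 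This identifies the product with the prefix of $p$, and since that prefix depends only on $N$, the asserted independence of the chosen legal representation follows at once.

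I expect the main obstacle to be precisely this last identification: Lemma~\ref{conjug} only guarantees that \emph{some} conjugator sends $M_m$ to $V_m$, so one must rule out the cyclic ambiguity $h\mapsto M_m^k h$, which is exactly where primitivity of $M_m$ together with the algebraic-length bookkeeping is needed. By contrast, the computation of $g$ is a routine unwinding of Lemma~\ref{endoV}, and the reduction from $p$ to $M_m$ is immediate from the length bound.
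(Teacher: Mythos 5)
Your proof is correct, and its skeleton is exactly the paper's: unwind $g$ from \eqref{g} via Lemma~\ref{endoV} to get $h=g^{-1}=M_{m-1}^{d_m}\cdots M_1^{d_2}M_0^{d_1}$, note that legality makes $h$ a word of $A^*$ of length $N$ (by the computation in Theorem~\ref{Vm}), and then identify $h$ with the length-$N$ prefix of $M_m$, hence of $p$ since $M_m=pab$ or $pba$ by Corollary~\ref{standard} and $N\le q_m-2=|p|$. The single point where you genuinely diverge is the identification step, which you correctly flag as the delicate one: the paper handles it by citing \eqref{MmhVm} and Lemma~\ref{conjug} (implicitly using that $h$ is a positive word), whereas you rule out the ambiguity $h\mapsto M_m^k h$ by invoking primitivity of $M_m$ and the fact that the centralizer of a non-power in $F(A)$ is cyclic, then killing $k$ with the algebraic-length homomorphism. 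That argument is valid (primitivity of $M_m$ is available in the paper, and a positive word that is a proper power in $F(A)$ necessarily has a positive root, so primitivity in $A^*$ does give centralizer $\langle M_m\rangle$), but it is heavier than necessary: since $h\in A^*$, the relation \eqref{MmhVm} rewrites as the equality $M_mh=hV_m$ of \emph{positive} words, of which $h$ is visibly a prefix; as $|h|=N<q_m=|M_m|$, it follows at once that $h$ is a prefix of $M_m$, with no appeal to primitivity, centralizers, or Lemma~\ref{conjug} at all. In short, the cyclic ambiguity you worry about cannot arise for a positive conjugator shorter than $M_m$; what your route buys is a self-contained justification that does not lean on the slightly terse citation of Lemma~\ref{conjug} in the paper, at the cost of importing a standard but unstated fact about free groups.
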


\begin{proof} 
The element $h=g^{-1}$, appearing in the proof of Theorem \ref{Vm}, is by 
(\ref{g}) and (\ref{Mm}) equal to
$$
h=(\pi(b_1,0)\circ\cdots\circ\pi(b_{m-1},0)(a^{d_m})) \cdots (\pi(b_1,0)(a^{d_2})) a^{d_1} 
$$
$$
=M_{m-1}^{d_m} \cdots M_{1}^{d_2} M_0^{d_0}.
$$
In particular, $h$ is in $A^*$. Because of the inequalities (\ref{ineq}), the    
word $V_m$ is in $A^*$, and $M_m$ is in $A^*$ too. 
Moreover, $h$ is of length $N$, by a calculation in the proof of Theorem \ref{Vm}; hence $|h|<q_m=|M_m|$. 
Thus, by (\ref{MmhVm}) and  by Lemma \ref{conjug}, $h$ is a prefix of $M_m$.
Since $M_m=pab$ or $M_m=pba$ is of length $q_m$, we get that $h$ is a prefix of $p$. 
\end{proof}

Define the sequence $c_i,i=1,\ldots,m$, by $c_i=a_i$ for $i=2,\ldots,m-1$ and $c_i=a_i-1$ for $i=1,m$; in other words, the $c_i$ coincide with the $a_i$, except the two extremes $c_1,c_m$, which are one less; note that $c_i=b_i$, except that $c_m=b_m-1$, if $m\geq 2$. For later use, we prove

\begin{lemma}\label{pal} Let $1\leq i\leq m$ and let $0\leq c\leq c_i$. The word
$$
M_{i-1}^{c} M_{i-2}^{c_{i-1}}\cdots M_{1}^{c_2} M_0^{c_1}
$$
is a palindrome. 
\end{lemma}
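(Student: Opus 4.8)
The plan is to reduce the statement to a single uniform palindromicity claim, prove a conjugation identity that serves as the engine, and then read off the result by a short reversal computation.

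First I would rewrite the word in a convenient form. Setting all $d_i=0$ in \eqref{eq} gives the recursion $M_i=M_{i-1}^{b_i}M_{i-2}$ (with $M_{-1}=b$, $M_0=a$). In the product of the statement the lower exponents are $c_1,\dots,c_{i-1}$; since every index $j$ with $1\le j\le i-1$ satisfies $j\le m-1$, one has $c_j=b_j$ there, while the top exponent obeys $0\le c\le c_i\le b_i$. Hence it suffices to prove, for every $i\ge 1$ and every $0\le c\le b_i$, that
\[
R_i(c):=M_{i-1}^{c}M_{i-2}^{b_{i-1}}\cdots M_1^{b_2}M_0^{b_1}
\]
is a palindrome. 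I introduce $P_j:=R_j(b_j)=M_{j-1}^{b_j}\cdots M_0^{b_1}$ for $j\ge 1$ and $P_0:=1$, so that $P_j=M_{j-1}^{b_j}P_{j-1}$ and $R_i(c)=M_{i-1}^{c}P_{i-1}$.

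The engine of the proof is the conjugation identity
\[
(\star_j)\qquad M_jP_j=P_j\widetilde{M_j}\qquad(j\ge0),
\]
which I would prove by induction on $j$. The cases $j=0,1$ are immediate (everything is a power of $a$ together with a single $b$). For $j\ge 2$ I would substitute $M_j=M_{j-1}^{b_j}M_{j-2}$, $P_j=M_{j-1}^{b_j}P_{j-1}$ and $\widetilde{M_j}=\widetilde{M_{j-2}}\,\widetilde{M_{j-1}}^{\,b_j}$ into $(\star_j)$. Cancelling $M_{j-1}^{b_j}$ on the left reduces the goal to $M_{j-2}M_{j-1}^{b_j}P_{j-1}=P_{j-1}\widetilde{M_{j-2}}\,\widetilde{M_{j-1}}^{\,b_j}$. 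Iterating $(\star_{j-1})$ gives $M_{j-1}^{b_j}P_{j-1}=P_{j-1}\widetilde{M_{j-1}}^{\,b_j}$, so after cancelling $\widetilde{M_{j-1}}^{\,b_j}$ on the right the goal becomes $M_{j-2}P_{j-1}=P_{j-1}\widetilde{M_{j-2}}$. Finally, writing $P_{j-1}=M_{j-2}^{b_{j-1}}P_{j-2}$ and using $(\star_{j-2})$ in the form $P_{j-2}\widetilde{M_{j-2}}=M_{j-2}P_{j-2}$ turns both sides into $M_{j-2}^{b_{j-1}+1}P_{j-2}$, closing the induction.

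With $(\star)$ in hand, I would prove that $R_i(c)$ is a palindrome by induction on $i$. For $i=1$, $R_1(c)=a^c$. For $i\ge 2$, the induction hypothesis (applied at level $i-1$ with exponent $b_{i-1}$) says that $P_{i-1}=R_{i-1}(b_{i-1})$ is a palindrome; iterating $(\star_{i-1})$ gives $M_{i-1}^{c}P_{i-1}=P_{i-1}\widetilde{M_{i-1}}^{\,c}$. Therefore the reversal of $R_i(c)=M_{i-1}^{c}P_{i-1}$ equals $\widetilde{P_{i-1}}\,\widetilde{M_{i-1}}^{\,c}=P_{i-1}\widetilde{M_{i-1}}^{\,c}=M_{i-1}^{c}P_{i-1}=R_i(c)$, as desired. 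The main obstacle is the identity $(\star)$: it is a two-step induction in which levels $j-1$ and $j-2$ both intervene, so I must keep the reversal bookkeeping $\widetilde{M_j}=\widetilde{M_{j-2}}\,\widetilde{M_{j-1}}^{\,b_j}$ and the left/right cancellations exact, and check the two base cases $(\star_0),(\star_1)$ as well as the degenerate case $a_1=1$ (where $b_1=0$ and some factors become empty) by hand. Once $(\star)$ is established the palindromicity is a short reversal computation; as a consistency check, $R_i(b_i-1)$ is precisely the central word $p_i$ of $M_i$ by Corollary \ref{frid}, and is indeed a palindrome.
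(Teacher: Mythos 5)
Your proof is correct, and it takes a genuinely different route from the paper's. The paper argues morphically: using Lemma \ref{endoV} it writes the product as an image under $\pi(b_1,0)\circ\cdots$, factors $\pi(b_1,0)=G\circ\pi(b_1-1,0)$ so that the whole word becomes $G(u)a$ with $u$ the analogous word for the sequence in which $a_1$ is diminished by one, inducts on $a_1+\cdots+a_m$ invoking the external fact that $G(u)a$ is a palindrome whenever $u$ is \cite[Lemma 4.1.4]{Re}, and handles $a_1=1$ separately by applying $E$ to a shorter sequence. You instead reduce (correctly, since $c_j=b_j$ for $j\leq m-1$ and $c_i\leq b_i$) to the slightly stronger claim about $R_i(c)=M_{i-1}^{c}P_{i-1}$, $0\leq c\leq b_i$, and run a self-contained two-step induction on the index via the conjugation identity $M_jP_j=P_j\widetilde{M_j}$; I checked the bookkeeping --- the left and right cancellations in the free monoid, the reduction of $(\star_j)$ first to $M_{j-2}P_{j-1}=P_{j-1}\widetilde{M_{j-2}}$ and thence via $P_{j-1}=M_{j-2}^{b_{j-1}}P_{j-2}$ to $(\star_{j-2})$, and the base cases including the degenerate $b_1=0$ --- and it is sound, as is the closing reversal computation $\widetilde{R_i(c)}=P_{i-1}\widetilde{M_{i-1}}^{\,c}=M_{i-1}^{c}P_{i-1}$. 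What your route buys is self-containedness: no appeal to the palindromization lemma from \cite{Re} (or to \cite[Prop.~7]{dLM}, which the paper mentions as an alternative), and no genuine case split at $a_1=1$. It is also worth noting that your identity $(\star)$ is exactly the $d_1=\cdots=d_m=0$ specialization of what the paper later proves in Lemmas \ref{commonpref} and \ref{XWpal}, where the palindromicity of $X_kW_k$ is obtained from Pirillo's theorem; so your argument recovers by elementary means precisely the special case that the paper itself observes Lemma \ref{pal} to be. One cosmetic caveat: your final consistency check identifying $R_i(b_i-1)$ with the central word via Corollary \ref{frid} needs $i\geq 2$ (or $b_1\geq 1$ when $i=1$), but nothing in the proof depends on that remark.
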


This lemma could be deduced from a result of de Luca and Mignosi \cite[Prop. 7]{dLM}.  
We give an independent proof. See also Lemma \ref{XWpal} below.

\begin{proof} By stability, it is enough to prove this result for $i=m$. Suppose first that $b_1=c_1\geq 1$. 
We have by Lemma \ref{endoV} and (\ref{piGE}),
$$
M_{m-1}^{c} M_{m-2}^{c_{m-1}}\cdots M_{1}^{c_2} M_0^{c_1}$$
$$=[\pi(b_1,0)\cdots\pi(b_{m-1},0)(a^c)][\pi(b_1,0)\cdots\pi(b_{m-2},0)(a^{c_{m-1}})]$$
$$\cdots[\pi(b_1,0)(a^{c_2})][a^{c_1}]
$$
$$=[G\pi(b_1-1,0)\cdots\pi(b_{m-1},0)(a^c)][G\pi(b_1-1,0)\cdots\pi(b_{m-2},0)(a^{c_{m-1}})]$$
$$\cdots[G\pi(b_1-1,0)(a^{c_2})][G(a^{c_1-1})]a
=G(u)a
$$
where 
$$
u=[\pi(b_1-1,0)\cdots\pi(b_{m-1},0)(a^c)][\pi(b_1-1,0)\cdots\pi(b_{m-2},0)(a^{c_{m-1}})]$$
$$\cdots[\pi(b_1-1,0)(a^{c_2})][a^{b_1-1}].
$$
By induction on the sum of the $a_i$, $u$ is a palindrome. Hence $G(u)a$ is a palindrome 
\cite[Lemma 4.1.4]{Re}.

Suppose now that $b_1=0$, that is $a_1=1$. Then, since $V_0=a,V_1=b$, the sequence of words $V_i, i=1,\ldots,m,$ is obtained from a shorter sequence, to which one applies $E$. We may therefore conclude by induction on $m$. 
\end{proof}

The next result is of independent interest. Before stating it, we point out that if $d_1, \ldots, d_m$ 
is a greedy representation, then $b_1 - d_1, \ldots, b_m - d_m$ is a lazy representation.

\begin{proposition}\label{mirror}
For any $d_1, \ldots , d_m$, with $0 \le d_k \le b_k$ for $k = 1, \ldots , m$,
the word $V_m (d_1, \ldots , d_m)$ is the mirror image of the word 
$V_m (b_1 - d_1, \ldots , b_m - d_m)$. 
\end{proposition}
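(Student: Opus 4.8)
The plan is to prove the statement by induction on $i$, establishing the more general claim that $V_i(d_1,\ldots,d_i)=\widetilde{V_i(b_1-d_1,\ldots,b_i-d_i)}$ for every $i$ from $-1$ to $m$; the case $i=m$ is the assertion. Write $d_i'=b_i-d_i$ for the complementary digits and set $W_i=V_i(d_1',\ldots,d_i')$. Since $0\le d_k\le b_k$, we also have $0\le d_k'\le b_k$, so all exponents occurring in the defining recursion (\ref{eq}) are nonnegative and both $V_i$ and $W_i$ lie in $A^*$. In particular the reversal $\widetilde{\phantom{x}}$ acts here as the genuine anti-automorphism of the free monoid $A^*$, satisfying $\widetilde{xy}=\widetilde y\,\widetilde x$ and $\widetilde{x^n}=\widetilde x^{\,n}$.

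First I would record the recursion satisfied by the complementary words. Substituting the $d_i'$ into (\ref{eq}) and using $b_i-d_i'=d_i$ gives
\[
W_i=W_{i-1}^{\,d_i}\,W_{i-2}\,W_{i-1}^{\,b_i-d_i}.
\]
Applying the reversal, reversing the product and commuting it past the powers, yields
\[
\widetilde{W_i}=\widetilde{W_{i-1}}^{\,b_i-d_i}\,\widetilde{W_{i-2}}\,\widetilde{W_{i-1}}^{\,d_i}.
\]
This is exactly the shape of the recursion (\ref{eq}) defining $V_i$ from $V_{i-1}$ and $V_{i-2}$: the order reversal induced by mirroring interchanges the two blocks of exponents $b_i-d_i$ and $d_i$, and this interchange is precisely what cancels the passage from $d_i$ to $b_i-d_i$ in the complementary word. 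Checking this matching is the only substantive point of the argument, and it is where the complementation $d_i\mapsto b_i-d_i$ is essential; everything else is bookkeeping.

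The induction then closes immediately. For the base cases, $V_{-1}=b$ and $V_0=a$ are single letters, hence palindromes, and they do not depend on the digits, so $V_{-1}=\widetilde{W_{-1}}$ and $V_0=\widetilde{W_0}$. Assuming $V_{i-1}=\widetilde{W_{i-1}}$ and $V_{i-2}=\widetilde{W_{i-2}}$, the displayed formula for $\widetilde{W_i}$ together with the definition $V_i=V_{i-1}^{\,b_i-d_i}V_{i-2}V_{i-1}^{\,d_i}$ gives $V_i=\widetilde{W_i}$. Taking $i=m$ yields the proposition. I do not expect a genuine obstacle beyond the recursion matching noted above, since the hypothesis $0\le d_k\le b_k$ keeps all words inside $A^*$ and thereby guarantees that reversal behaves as a monoid anti-automorphism throughout.
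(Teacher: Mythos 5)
Your proof is correct and is essentially the paper's own argument: the paper likewise proceeds by induction, writing $V_m = V_{m-1}^{b_m-d_m}V_{m-2}V_{m-1}^{d_m}$, reversing via the anti-automorphism, and invoking the inductive hypothesis on $V_{m-1}$ and $V_{m-2}$ to identify the result with $V_m(b_1-d_1,\ldots,b_m-d_m)$. The only cosmetic difference is that you reverse the complementary word $W_i$ while the paper reverses $V_m$ directly; the two computations are mirror images of each other and carry identical content.
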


\begin{proof}
This is proved by induction. Write
$$
V_m = V_{m-1}^{b_m - d_m} V_{m-2} V_{m-1}^{d_m}.
$$
Then, we have
$$
\widetilde V_m = \widetilde V_{m-1}^{d_m} \oV_{m-2} \oV_{m-1}^{b_m - d_m} = V_m (b_1 - d_1, \ldots , b_m - d_m),
$$
since, by the induction hypothesis, we have 
$$
\oV_{m-2} = V_{m-2} (b_1 - d_1, \ldots , b_{m-2} - d_{m-2}), 
$$
and
$$
\oV_{m-1} = V_{m-1} (b_1 - d_1, \ldots , b_{m-1} - d_{m-1}). 
$$
The proof is complete. 
\end{proof}

As a consequence of Proposition \ref{mirror}, we find the following well-known result.    %%y 

\begin{corollary}
Each Christoffel class  
comprises at most one palindrome and 
it comprises one palindrome precisely when $b_1, \ldots , b_m$ are all even, that is precisely 
when the words in the class have odd length. 
\end{corollary}

This result is not new: it is a consequence for example of the fact that the Burrows-Wheeler tableau 
of a Christoffel word (and even each perfectly clustering word) has a central symmetry (Theorem 4.3 of Simpson and Puglisi \cite{SP}).

\section{Borders of conjugates of Christoffel words}\label{BORDS}

We keep the notation of Section \ref{Onumer}. Recall that a {\it border} of a word is a nontrivial proper prefix which is also a suffix of this word.   %%y 

\subsection{Borders} 

In this subsection, we determine the longest border of every conjugate of a Christoffel word, thereby 
reproving a result of Lapointe \cite{La} 
(but with a totally different method). 
Indeed, the length of the longest border and the smallest nontrivial period of a word are related: their sum is the length of the word.

Before stating the main result of this section, recall that Corollary \ref{Cword} characterizes the cases where $V_m$ is a Christoffel word: informally speaking, the digits $d_i$ 
of the greedy representation must alternate between $b_i$ and 0. This extends by stability to each word $V_i$, $i<m$. It is well known that a Christoffel word has no border, 
which explains the hypothesis in the next result.

Moreover, in this result, we give the longest border of $V_m$. The other borders are all determined using Lemma \ref{bordmax}. 
\begin{theorem} \label{bords} 
Suppose that $m\geq 3$ or $m=2$ and $b_1 \ge 1$.
Let $N$ be an integer with $0 \le N \le q_m - 1$ and 
$N = \sum_{1 \le i \le m} d_i q_{i-1}$ be its greedy representation. 
Put $V_m = V_m (d_1, \ldots , d_m)$, $V_{m-1} = V_{m-1} (d_1, \ldots , d_{m-1})$, 
and $V_{m-2} = V_{m-2} (d_1, \ldots , d_{m-2})$. Assume that $V_m$ is not a Christoffel word. 
Let 
$$
\ell=\min\{ b_m - d_m, d_m\} \quad \hbox{and} \quad h = \min\{b_{m-1} - d_{m-1}, d_{m-1} + 1\}.
$$
Let $B$ be the longest border of $V_m$.

\begin{enumerate}[(i)]
\item If $d_m = b_m$, then 
$B=V_{m-1}$.

\item If $1 \le d_m \le b_m - 1$ and $1 \le d_{m-1} \le b_{m-1} - 1$, then $B=V_{m-1}^\ell$.

\item If $1 \le d_m \le b_m - 1$ and $d_{m-1} = 0$, then $B=V_{m-1}^{\ell}$, except if $b_m-d_m < d_m$ and  the sequence $d_{1},\ldots,d_{m-1}$ is not alternating, in 
which case $B=V_{m-1}^{\ell+1}$.

\item If $1 \le d_m \le b_m - 1$ and $d_{m-1} = b_{m-1}$, then $B=V_{m-1}^\ell$.

\item If $d_m=0$ and $b_m\geq 2$, then $B=V_{m-1}^{b_{m}-1}V_{m-2}$.

\item If $d_m=0$, $b_m=1$, and $b_{m-1} - d_{m-1} \ge 1$, $B=V_{m-2}^h$, 
except if $m \ge 3$,
$d_{m-2}=0$, $b_{m-1}-d_{m-1}< d_{m-1}+1$ and the sequence $d_1,\ldots,d_{m-2}$ is not alternating, 
in which case $B=V_{m-2}^{h+1}$.

\item If $d_m=0$, $b_m=1$, and $d_{m-1}=b_{m-1}$, then $B=V_{m-2}$.
\end{enumerate}
\end{theorem}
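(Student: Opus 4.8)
### Plan of Proof

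The plan is to treat the seven cases uniformly through the recursive structure of $V_m = V_{m-1}^{b_m - d_m} V_{m-2} V_{m-1}^{d_m}$, exploiting the fact (Lemma~\ref{endoV}) that $V_{m-1}$ and $V_{m-2}$ generate a free submonoid of $A^*$. The central observation is that a border $B$ of $V_m$, being simultaneously a prefix and a suffix, must interact with the two flanking powers $V_{m-1}^{b_m-d_m}$ and $V_{m-1}^{d_m}$. First I would establish the general principle governing the longest border: since $V_m$ begins with $V_{m-1}^{b_m-d_m}$ (when $b_m - d_m \ge 1$) and ends with $V_{m-1}^{d_m}$, a natural candidate for a border is a power $V_{m-1}^\ell$ with $\ell = \min\{b_m - d_m, d_m\}$, and one must verify both that this is genuinely a border and that nothing longer works. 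The hard part is precisely the \emph{maximality}: ruling out longer borders that might arise from the internal $V_{m-2}$ block creating unexpected prefix-suffix coincidences.

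The key technical tool I would isolate first is a synchronization/primitivity lemma: because $V_{m-1}, V_{m-2}$ are a free basis, any factorization of a power product into these blocks is essentially unique, so a putative border $B$ of $V_m$ must respect the block decomposition except possibly at its two endpoints. This reduces each case to a finite combinatorial check on how many full $V_{m-1}$-factors $B$ can absorb before reaching the $V_{m-2}$ core, together with an analysis of whether a residual overlap inside $V_{m-2}$ (or across a boundary) can be completed to a longer border. The role of the hypothesis that $V_m$ is \emph{not} a Christoffel word is to exclude the degenerate alternating-digit configurations where $V_m$ would be borderless (Corollary~\ref{Cword}); without this exclusion the minimum $\ell$ could collapse to $0$.

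With the framework in place, I would dispatch the cases by the value of $d_m$. Case (i), $d_m = b_m$, is the cleanest: $V_m = V_{m-2} V_{m-1}^{b_m}$ begins and ends with $V_{m-1}$ (the prefix from $V_{m-2} = V_{m-3}^{\dots}V_{m-2}^{\dots}$ unraveling appropriately, the suffix obvious), giving $B = V_{m-1}$, and maximality follows from length comparison since a longer border would force a period shorter than $|V_{m-2}|$. Cases (ii)--(iv), with $1 \le d_m \le b_m - 1$, all yield $B = V_{m-1}^\ell$, the differences lying only in whether the second-level data $d_{m-1}$ permits the extra $+1$ boost; here I would invoke the recursive hypothesis on $V_{m-1}$ versus $V_{m-2}$ and the mirror symmetry of Proposition~\ref{mirror} to control the overlap. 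Cases (v)--(vii), with $d_m = 0$, are the mirror-images (via $\widetilde V_m = V_m(b_1-d_1,\dots,b_m-d_m)$) of the large-$d_m$ cases, so I would reduce them to the earlier analysis applied to the complementary digit sequence, with the subcase $b_m = 1$ requiring one further descent to level $m-2$ and an application of Lemma~\ref{Ost-C} to detect the alternating configurations that trigger the $+1$ correction.

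I expect the main obstacle to be the exceptional clauses in cases (iii) and (vi): proving that when $b_m - d_m < d_m$ (respectively the analogous level-$(m-1)$ inequality) \emph{and} the relevant prefix sequence fails to be alternating, the longest border genuinely gains the extra factor $V_{m-1}^{\ell+1}$ (respectively $V_{m-2}^{h+1}$). This will hinge on a careful length count showing that the longer border embeds correctly into the $V_{m-2}$ core precisely when that core is \emph{not} already a Christoffel word at the previous level, which is exactly the non-alternating condition translated through Corollary~\ref{Cword} and Lemma~\ref{Ost-C}.
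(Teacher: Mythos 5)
There are two genuine gaps in your proposal, and each sits exactly where the real work of the proof lies. First, your central tool --- the ``synchronization lemma'' you claim follows from $\{V_{m-1},V_{m-2}\}$ being a free basis --- does not follow from freeness and is in fact false as stated. Freeness of the code gives unique factorization of words \emph{belonging} to the submonoid, read from the left; it says nothing about occurrences of $V_{m-1}$ as a factor starting at an arbitrary position of $V_m$, and a border, being a suffix as well as a prefix, is read against the block structure in two different phases at once. Worse, unsynchronized occurrences of $V_{m-1}$ genuinely exist: when $d_{m-1}=0$ (resp.\ $d_{m-1}=b_{m-1}$), an extra occurrence of $V_{m-1}$ can begin at the $V_{m-2}$ block and straddle the block boundary (cases L and R of the paper's Lemma \ref{overlap2}), and these are precisely what produce the exceptional $+1$ clauses in (iii) and (vi). The paper's substitute for your synchronization claim is not freeness but primitivity: Lemma \ref{overlap} ($V_m$ is not an internal factor of $V_mV_{m-1}$ or $V_{m-1}V_m$), the careful occurrence analysis of Lemma \ref{overlap2}, the border lemmas \ref{YX}, \ref{YXY}, \ref{XY}, and the greedy-specific facts (Corollary \ref{pref1}, Lemmas \ref{dm0} and \ref{infer}) that translate ``extra occurrence exists'' into the alternating-sequence condition. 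Your proposal contains no workable mechanism to rule out the unsynchronized occurrences, which is the heart of the theorem.

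Second, your reduction of cases (v)--(vii) to the large-$d_m$ cases via Proposition \ref{mirror} fails. Complementation $d_i\mapsto b_i-d_i$ turns a greedy representation into a \emph{lazy} one, so the theorem's case hypotheses (stated for greedy digits) do not apply to the mirrored word; re-expressing the mirror in greedy coordinates, the involution maps the region $d_m=0$ into itself rather than onto $d_m=b_m$. Concretely, for $m=2$, $a_1=2$, $a_2=3$, $N=0$, the word $V_2=abababa$ is its own reversal, its longest border is $V_1^{b_2-1}V_0=ababa$ of length $q_2-q_1=5$, whereas the mirror-of-case-(i) prediction would give a border of length $q_1=2$; in general $|V_{m-1}^{b_m-1}V_{m-2}|=q_m-q_{m-1}\neq q_{m-1}$. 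This is why the paper treats (v) directly with Lemma \ref{YX}, and handles (vi)--(vii) by descending one more level, writing $V_m=V_{m-2}^{\,b_{m-1}-d_{m-1}}V_{m-3}V_{m-2}^{\,d_{m-1}+1}$ and rerunning the occurrence analysis at level $m-2$ --- a step your sketch mentions only in passing and for which the mirror shortcut cannot substitute.
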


Let us comment briefly the theorem. 
Since $V_m = V_{m-1}^{b_m - d_m} V_{m-2} V_{m-1}^{d_m}$, we see that 
$V_{m-1}^{\min\{ b_m - d_m, d_m\}}$ is an obvious border of $V_m$. 
The point is that it may happen that it is not the longest.   %% suppression saut de ligne
Indeed, if the last three digits in the greedy representation of $N$ are $d_{m-2}, 0, d_m$, 
with $d_m \ge 1$ and $d_{m-2} \le b_{m-2} - 1$, then 
$$
\Bigl( \, \sum_{1 \le i \le m-3} d_i q_{i-1} \Bigr) 
+  (d_{m-2} + 1) q_{m-3} + b_{m-1} q_{m-2} + (d_m - 1) q_{m-1} 
$$
is a legal 
representation of $N$. These representations induce, respectively, the factorizations
\begin{equation}\label{eqm}
V_m = V_{m-1}^{b_m - d_m} V_{m-2} V_{m-1}^{d_m}
\end{equation}
and
$$
V_m = V_{m-1}^{b_m - d_m + 1} V'_{m-2} V_{m-1}^{d_m - 1},
$$
where we have $V_{m-1} V'_{m-2} = V_{m-2} V_{m-1}$ and 
$V_{m-1} = V_{m-2}^{b_{m-1}} V_{m-3} = V_{m-3} (V'_{m-2})^{b_{m-1}}$. 
In this case, $V_{m-1}^{\min\{ b_m - d_m + 1, d_m\}}$ is the longest border of $V_m$. 

The key point for the proof of Theorem \ref{bords} 
is the determination of all the occurrences of $V_{m-1}$ in $V_m$. 
Exactly $b_m$ of them can be read on the factorization (\ref{eqm}), but there may be 
additional ones. By primitivity of $V_{m-1}$, the word $V_{m-1} V_{m-1}$ 
contains exactly two occurrences of $V_{m-1}$. Consequently, if an additional 
occurrence of $V_{m-1}$ appears, then it must be a factor of $V_{m-1} V_{m-2} V_{m-1}$. 
A more precise statement is given in Lemma \ref{overlap2}.

Theorem \ref{bords} will be proved in Section \ref{proofBords}.

\subsection{Consequences}

We display a direct consequence of Theorem \ref{bords}.

\begin{corollary}\label{power}
Any border of a conjugate of a Christoffel word is a power of a 
conjugate of a Christoffel word.
\end{corollary}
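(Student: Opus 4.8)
The plan is to prove, by strong induction on $|w|$, the following statement for every conjugate $w$ of a Christoffel word: each border of $w$ is a power of a conjugate of a Christoffel word. If $w$ has no border there is nothing to prove, so I assume it does; then $w$ is not a Christoffel word, since Christoffel words are borderless. When $w$ belongs to a class with a single parameter $a_1$ (i.e.\ $m=1$), or with $m=2$ and $a_1=1$, Theorem \ref{bords} does not apply, but then $w$ has the form $a^i b a^j$ or $b^i a b^j$, whose borders are exactly the powers $a^t$, respectively $b^t$; as $a$ and $b$ are themselves Christoffel words, the claim holds directly. In all remaining cases Theorem \ref{bords} applies, and I run the inductive step below.

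I would first record two facts. By the stability property and Lemma \ref{endoV}, the words $V_{m-1}$ and $V_{m-2}$ occurring in Theorem \ref{bords} equal $V_{m-1}(d_1,\ldots,d_{m-1})$ and $V_{m-2}(d_1,\ldots,d_{m-2})$ for the truncated data $a_1,\ldots,a_{m-1}$ and $a_1,\ldots,a_{m-2}$ with the same legal digits; hence, by Theorem \ref{Vm}, each is a conjugate of a Christoffel word, and in particular primitive. Second, the borders of $w$ form a chain: if $B$ is the longest border and $z$ is any other border, then $z$, being a shorter prefix and a shorter suffix of $w$, is both a prefix and a suffix of $B$, hence a border of $B$; conversely every border of $B$ is a border of $w$. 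Thus the borders of $w$ are $B$ together with the borders of $B$.

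Next I would verify that the longest border $B$ given by Theorem \ref{bords} is always a power of a conjugate of a Christoffel word. In cases (i)--(iv), (vi), (vii) it is an explicit power $V_{m-1}^{k}$ or $V_{m-2}^{k}$, so this follows from the first recorded fact. The only case needing work is (v), where $d_m=0$, $b_m\ge 2$, and $B=V_{m-1}^{b_m-1}V_{m-2}$. Here I would observe that $B$ is exactly the word $V_m$ attached to the modified data $a_1,\ldots,a_{m-1},a_m-1$ with digits $d_1,\ldots,d_{m-1},0$: the modified $b_m$ equals $b_m-1\ge 1$, the modified $V_{m-1},V_{m-2}$ are unchanged by stability, and evaluating \eqref{eq} gives $V_{m-1}^{(b_m-1)-0}V_{m-2}V_{m-1}^{0}=V_{m-1}^{b_m-1}V_{m-2}=B$. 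By Theorem \ref{Vm} this modified $V_m$ lies in a Christoffel conjugation class, so $B$ is itself a conjugate of a Christoffel word, i.e.\ a first power of one.

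Finally I would control the borders of $B$. Writing $B=U^{k}$ with $U$ a primitive conjugate of a Christoffel word and $|U|<|w|$, I would establish the self-contained lemma that every border $z$ of a power $U^{k}$ of a primitive word is either $U^{q}$ for some $1\le q\le k-1$ or a border of $U$. Indeed, writing $|z|=q|U|+r$ with $0\le r<|U|$ and reading $z$ both as a prefix and as a suffix of $U^{k}$ yields $U^{q}z'=z''U^{q}$, where $z',z''$ are the length-$r$ prefix and suffix of $U$. If $q\ge 1$, comparing the first $r$ letters forces $z'=z''$, so $z'$ commutes with $U^{q}$; by primitivity of $U$ this makes $z'$ a power of $U$, impossible for $0<r<|U|$, whence $r=0$ and $z=U^{q}$. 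If $q=0$, then $z=z'$ is a border of $U$. Applying this lemma with the chain property, every border of $w$ is either a power $U^{q}$ (with $1\le q\le k$) or a border of $U$; since $|U|<|w|$, the induction hypothesis makes every border of $U$ a power of a conjugate of a Christoffel word, which closes the induction. The main obstacle is the border lemma for $U^{k}$: one must check that the commutation step genuinely invokes the primitivity of $U$ and thereby excludes the ``mixed'' borders with both $q\ge 1$ and $r>0$. Case (v), where the longest border is not visibly a power, is a milder second subtlety, resolved by the reparametrization above.
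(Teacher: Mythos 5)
Your proof is correct and follows exactly the route the paper intends: the paper states Corollary \ref{power} without proof as a direct consequence of Theorem \ref{bords}, relying on Lemma \ref{bordmax} (your chain property and border-of-$U^k$ lemma) to descend to the remaining borders, and on the reparametrization $V_{m-1}^{b_m-1}V_{m-2} = V_m(d_1,\ldots,d_{m-1},0)$ for the data $a_1,\ldots,a_{m-1},a_m-1$ (made explicit in the paper via Corollary \ref{H}(a)) to handle case (v). Your explicit treatment of the small cases $m=1$ and $m=2$, $a_1=1$ excluded from Theorem \ref{bords} is a welcome detail the paper leaves implicit.
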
 

As noted by one of the referees, this result may also be obtained as follows: if $u$ is a border of the Christoffel word $w$, 
then $uu$ is a factor of $ww$; since $ww$ is a Sturmian word, it is Sturmian, and thus $uu$ too. 
Hence all conjugates of $u$ are Sturmian. Among them is the power $\ell^k$ of some Lyndon word $\ell$, which is therefore Sturmian; 
hence $\ell$ is a Christoffel word by a theorem of Berstel and de Luca (see \cite{Re} Corollary 13.4.3).
%
%balanced, which implies that $u$ is the power of some conjugate of a Christoffel word.

One may be more precise. For this we need a notation, since we deal with different sequences $a_1,\ldots,a_m$. We write
$$
H_N(a_1,\ldots,a_m)=C^N(M_m),
$$
where $M_m=V_m(0,\ldots,0)$ is as before the word corresponding to the sequence $a_1,\ldots,a_m$ and to the Ostrowski representation of 0. Note that here $N$ may be in $
\mathbb Z$; but the word $H_N(a_1,\ldots,a_m)$ depends only on $N$ modulo $q_m$, where $q_m$ is the length of this word (recall that $q_j=K_j(a_1,\ldots, a_j)$).

\begin{corollary}\label{H} Let $a_1\ldots,a_m$ be a sequence of positive integers. If $N=q_{m-1}-1$ or $N=q_m-1$, then $H_N(a_1,\ldots,a_m)$ is a Christoffel word and has no border.

Now, let $0\leq N\leq q_m{-1}$, $N\neq q_{m-1}-1,q_m-1$, and denote by $B_N$ the longest border of $H_N(a_1,\ldots,a_m)$. 

\noindent (a) Suppose that $a_m\geq 2$. 

If $0\leq N< q_{m-1}-1$, then
$B_N=  H_N(a_1,\ldots,a_{m-1},a_{m}-1)$.

If $q_{m-1}\leq N< q_m{-1}$, then $B_N=  H_N(a_1,\ldots,a_{m-1})^t$, where $t =\min\{\lfloor \frac{N}{q_{m-1}}\rfloor,1+\lfloor \frac{q_m-2-N}{ q_{m-1}}\rfloor\}$.

\noindent (b) Suppose that $a_m=1$. 

If $0\leq N< q_{m-1}-1$, then
$B_N=  H_N(a_1,\ldots,a_{m-2})^t$, where $t=1+\min\{\lfloor \frac{N}{q_{m-2}} \rfloor,\lfloor \frac{q_{m-1}-2-N}{q_{m-2}} \rfloor \}$.

If $q_{m-1}\leq N< q_m{-1}$, then $B_N=  H_N(a_1,\ldots,a_{m-1})$.
\end{corollary}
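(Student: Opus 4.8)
The plan is to translate Theorem \ref{bords}, which expresses the longest border $B$ of $V_m=V_m(d_1,\ldots,d_m)$ through the greedy digits and the words $V_{m-1},V_{m-2}$, into the intrinsic notation $H_N$. The only bridge needed is the identification, valid for every $k\le m$ by the stability property together with Theorem \ref{Vm} applied to the truncated sequence $a_1,\ldots,a_k$:
$$
V_k(d_1,\ldots,d_k)=C^{N\bmod q_k}(M_k)=H_N(a_1,\ldots,a_k),
$$
because $d_1,\ldots,d_k$ is the greedy representation of $N\bmod q_k$ for the sequence $a_1,\ldots,a_k$ and so represents an integer in $\{0,\ldots,q_k-1\}$. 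In particular $V_{m-1}=H_N(a_1,\ldots,a_{m-1})$ and $V_{m-2}=H_N(a_1,\ldots,a_{m-2})$.

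First I would dispose of the two excluded values: for $N=q_m-1$ the greedy representation is the alternating one with $d_m=b_m$ (Lemma \ref{Ost-C} and its proof), while for $N=q_{m-1}-1$ one has $d_m=0$ and $d_1,\ldots,d_{m-1}$ alternating ending in $d_{m-1}=b_{m-1}$, so $d_1,\ldots,d_m$ is again alternating; in both cases Corollary \ref{Cword} makes $H_N$ a Christoffel word, which has no border. For the remaining $N$ I would match the words case by case. The only nonobvious identification is $V_{m-1}^{\,b_m-1}V_{m-2}=H_N(a_1,\ldots,a_{m-1},a_m-1)$ in Theorem \ref{bords}(v) (part (a), $0\le N<q_{m-1}-1$, $b_m\ge 2$): the decremented sequence has last continuant $q_m-q_{m-1}$, the same first $m-1$ greedy digits, and $m$-th digit $0$ since $N<q_{m-1}$, whence $V'_m(d_1,\ldots,d_{m-1},0)=V_{m-1}^{\,b_m-1}V_{m-2}$. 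Every other border is a power of $V_{m-1}$ (cases (i)--(iv),(vii)) or of $V_{m-2}$ (case (vi)), and is identified with a power of $H_N(a_1,\ldots,a_{m-1})$ or $H_N(a_1,\ldots,a_{m-2})$ by the bridge above.

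It remains to pin down the exponents. Write $N'=\sum_{i=1}^{m-1}d_iq_{i-1}$ and $N''=\sum_{i=1}^{m-2}d_iq_{i-1}$, so that $\lfloor N/q_{m-1}\rfloor=d_m$ and, when $d_m=0$, $\lfloor N/q_{m-2}\rfloor=d_{m-1}$. Using $q_m=b_mq_{m-1}+q_{m-2}$ one gets
$$
q_m-2-N=(b_m-d_m)q_{m-1}+(q_{m-2}-2-N'),
$$
so the second entry of the minimum in part (a) equals $b_m-d_m+1$ if $N'\le q_{m-2}-2$ and $b_m-d_m$ if $N'\ge q_{m-2}-1$. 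Hence the stated $t$ equals $\ell=\min\{d_m,b_m-d_m\}$ except when $N'\le q_{m-2}-2$ and $b_m-d_m<d_m$, where it equals $\ell+1$. This reproduces Theorem \ref{bords}(i)--(iv): if $d_{m-1}\ge 1$ then $N'\ge q_{m-2}$ forces $t=\ell$ (cases (ii),(iv)); if $d_m=b_m$ the allowed range $N<q_m-1$ forces $N'\le q_{m-2}-2$ and gives $t=1=\ell+1$ (case (i)); and if $d_{m-1}=0$ (case (iii)) the inequality $N'\le q_{m-2}-2$ is, by the alternating characterization of greedy representations of $q_{m-2}-1$ (Lemma \ref{Ost-C}), exactly the condition that $d_1,\ldots,d_{m-1}$ be non-alternating. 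An entirely parallel computation with $q_{m-1}=b_{m-1}q_{m-2}+q_{m-3}$ yields part (b); here the base exponent is $1$ (since $\ell=0$ when $b_m=1$), the $\pm1$ flip is controlled by whether $N''$ reaches $q_{m-3}-1$, equivalently by whether $d_1,\ldots,d_{m-2}$ is alternating, and the clause $m\ge 3$ is automatic because for $m=2$ one has $q_{m-2}=1$ and the floors collapse.

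The main obstacle is precisely this last step: reconciling the closed floor form of $t$ with the four- and two-fold case splits of Theorem \ref{bords}, and verifying that each ``$+1$'' correction fires exactly when a truncated remainder attains its maximal value $q_k-1$ --- the alternating, Christoffel configuration singled out by Lemma \ref{Ost-C} and Corollary \ref{Cword}.
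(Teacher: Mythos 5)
Your proposal is correct, and for part (a) it is essentially the paper's own proof: the same bridge $V_k(d_1,\ldots,d_k)=C^{N\bmod q_k}(M_k)=H_N(a_1,\ldots,a_k)$, the same identification $V_{m-1}^{b_m-1}V_{m-2}=H_N(a_1,\ldots,a_{m-1},a_m-1)$ in the case $d_m=0$, and the same decomposition $q_m-2-N=(b_m-d_m)q_{m-1}+(q_{m-2}-2-N')$, with the ``$+1$'' correction to $\ell$ firing exactly when $d_{m-1}=0$, $b_m-d_m<d_m$, and the truncated remainder fails to attain $q_{m-2}-1$, i.e.\ when $d_1,\ldots,d_{m-1}$ is not alternating (the paper's Parts 1--4, resting on Lemmas \ref{altlazyineq} and \ref{greedylazyineq} where you cite Lemma \ref{Ost-C}; same mathematics). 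Where you genuinely diverge is part (b): the paper does \emph{not} redo the floor analysis for $a_m=1$. It proves instead, via Lemma \ref{am=1}, that $H_N(a_1,\ldots,a_m)=H_{N+q_{m-2}}(a_1,\ldots,a_{m-2},a_{m-1}+1)$, and then simply invokes the already-established part (a) for the shorter sequence, translating the floors through the shift $N\mapsto N+q_{m-2}$ (and, for $q_{m-1}\le N<q_m-1$, through a further reduction modulo the word length). Your route --- a parallel min/floor computation against cases (i), (vi), (vii) of Theorem \ref{bords} using $q_{m-1}=b_{m-1}q_{m-2}+q_{m-3}$ --- does work: I checked that in case (vii) the apparent degenerate value $t=0$ can only arise from $N''=q_{m-3}-1$, i.e.\ $N=q_{m-1}-1$, which is excluded, so $t=1$ matches $B=V_{m-2}$ there. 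Your approach buys directness and symmetry with part (a) at the cost of repeating the entire exceptional-flip bookkeeping; the paper's reduction buys economy, making all of part (b) a formal corollary of part (a).

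Two small repairs are needed. First, your grouping ``powers of $V_{m-1}$ (cases (i)--(iv),(vii))'' is a slip: case (vii) yields $B=V_{m-2}$, so it belongs with case (vi) (your detailed part (b) computation treats it correctly anyway). Second, Theorem \ref{bords} carries the global hypothesis ``$m\geq 3$, or $m=2$ and $b_1\geq 1$''; your remark that the $m\ge 3$ clause collapses for $m=2$ addresses only the internal clause of case (vi), not this hypothesis. So the cases $m=1$ and $m=2$ with $b_1=0$ escape your argument entirely; the paper dispatches $m=1$ by direct inspection and reduces $m=2$, $b_1=0$ to $m=1$ via Lemma \ref{a1=1}, and you should do likewise.
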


Note that we recover a step function (the number $t$ in the statement) as it appears in Lapointe's article, see for example \cite[Figure 3]{La}.

Recall that $E$ denotes the involution which permutes $a$ and $b$.

\begin{lemma}\label{a1=1} 
Suppose that $a_1=1$. 
Let $V_i$, $i=0, \ldots , m$, be as usual and $V'_i$, $i=0, \ldots , m-1$, 
the sequence of words associated with the sequence of positive numbers $a_2+1, a_3, \ldots, a_{m}$.     %%y 
Then for any legal Ostrowski representation $N=\sum_{1\leq i\leq m} d_iq_{i-1}$ 
(so that $d_1=0$), one has
$$
V_m(d_1, d_2, \ldots,d_m) = E \bigl( V'_{m-1}(d_2,\ldots, d_{m}) \bigr).
$$
%where $\tau$ is the involution which exchanges $a$ and $b$. 
\end{lemma}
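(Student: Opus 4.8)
The plan is to prove, by a two-step induction on the index, the stronger statement
$$
V_i(d_1,\ldots,d_i) = E\bigl(V'_{i-1}(d_2,\ldots,d_i)\bigr), \qquad i = 0,1,\ldots,m,
$$
of which the lemma is the case $i=m$. First I would record how the two sets of Ostrowski data match up. Writing $b'_j$ for the quantities attached to the shifted sequence $a_2+1,a_3,\ldots,a_m$, one has $b'_1 = (a_2+1)-1 = a_2 = b_2$ and $b'_j = a_{j+1} = b_{j+1}$ for $j\ge 2$, so that $b'_j = b_{j+1}$ for every $j\ge 1$; moreover the digit of $V'$ in position $j$ is $d_{j+1}$. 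Thus the defining recursion for $V'$ reads
$$
V'_{j} = (V'_{j-1})^{b_{j+1}-d_{j+1}}\, V'_{j-2}\, (V'_{j-1})^{d_{j+1}}.
$$

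For the base of the induction I would check $i=0$ and $i=1$. Since $a_1=1$ forces $b_1=0$ and (by legality) $d_1=0$, the recursion \eqref{eq} gives $V_1 = V_0^{0}\,V_{-1}\,V_0^{0}=V_{-1}=b$. Hence $V_0 = a = E(b) = E(V'_{-1})$ and $V_1 = b = E(a) = E(V'_0)$, which are exactly the two base cases. The use of the hypothesis $a_1=1$ is concentrated here: it is what collapses the first step and aligns the two sequences.

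For the inductive step, fix $i\ge 2$ and assume the statement for $i-1$ and $i-2$, that is $E(V'_{i-2})=V_{i-1}$ and $E(V'_{i-3})=V_{i-2}$. Because $E$ is an endomorphism of $A^*$ (in fact an automorphism of $F(A)$), it respects products and powers; since the representation is legal all exponents are nonnegative and all words lie in $A^*$, so no care about $F(A)\setminus A^*$ is needed. Applying $E$ to the shifted recursion above at $j=i-1$ then gives
$$
E(V'_{i-1}) = E(V'_{i-2})^{b_i-d_i}\,E(V'_{i-3})\,E(V'_{i-2})^{d_i}
= V_{i-1}^{b_i-d_i}\,V_{i-2}\,V_{i-1}^{d_i}=V_i,
$$
the last equality being the defining recursion \eqref{eq} for $V_i$. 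This closes the induction and yields the lemma at $i=m$.

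There is no serious obstacle here: the content is the observation that the recursion \eqref{eq} is equivariant under the relabelling $E$ composed with a shift of the index by one, and the only points requiring care are the bookkeeping of the indices and the identification $b'_j=b_{j+1}$, together with the verification that the hypothesis $a_1=1$ produces the correct two base cases (in particular $V_1=b$).
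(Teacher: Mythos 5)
Your proof is correct and follows essentially the same route as the paper: identify $b'_j=b_{j+1}$ (and the digit shift $d'_j=d_{j+1}$), verify two consecutive base cases, and run the two-step induction on the recursion \eqref{eq}, using that $E$ is a morphism. The only cosmetic difference is that you anchor the induction at $V_0=E(V'_{-1})$ and $V_1=E(V'_0)$ while the paper checks $V_1=E(V'_0)$ and $V_2=E(V'_1)$ and then declares the induction immediate; your write-up just makes that induction explicit.
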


\begin{proof} Denote the new sequence by $a'_1=a_2+1,a'_2=a_3,\ldots,a'_{m-1}=a_m$; then the associated sequence of $b_i$'s is $b'_1=a'_1-1=a_2,b'_2=a'_2,\ldots,b'_{m-1}=a'_{m-1}$.
Recall that 
$V_{-1}=V'_{-1}=b,V_0 = V'_0 = a$. 
Observe that $V_1 = b$, $V_2 = b^{a_2 - d_2} a b^{d_2}$, while $V'_1 = a^{a_2 - d_2} b a^{d_2}$.
Thus, $V_1 = E (V'_0)$ and $V_2 = E (V'_1)$. 
An immediate induction based on (\ref{eq}) proves the lemma. 
\end{proof}

\begin{lemma}\label{am=1} Suppose that $a_m=1$. 
Let $V_i$, $i=1, \ldots , m$, be as usual and $V'_i$, $i=1, \ldots , m-1$,
the sequence of words associated with the sequence of positive numbers $a_1,\ldots,a_{m-2},a_{m-1}+1$. Then for any legal Ostrowski representation $N=\sum_{1\leq i\leq m} d_iq_{i-1}$ (so that $d_m=0$ or $1$), one has
$$
V_m(d_1,\ldots,d_{m-1},0)=V'_{m-1}(d_1,\ldots,d_{m-2},d_{m-1}+1), $$
$$V_m(d_1,\ldots,d_{m-1},1)=V'_{m-1}(d_1,\ldots, d_{m-2},d_{m-1}).
$$
\end{lemma}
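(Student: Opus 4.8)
The plan is to prove Lemma~\ref{am=1} by induction, comparing the two recursions directly, exactly as was done for Lemma~\ref{a1=1}. The key observation is that the new sequence $a'_1,\ldots,a'_{m-1}=a_1,\ldots,a_{m-2},a_{m-1}+1$ produces $b$-values that agree with the old ones for all indices below $m-1$, and differ only at the last: $b'_i=b_i$ for $i\le m-2$, while $b'_{m-1}=a'_{m-1}-\delta_{1,m-1}=a_{m-1}+1$ (treating $i=1$ specially only if $m-1=1$). Since $a_m=1$ forces $b_m=a_m=1$ (assuming $m\ge2$; the edge case $m=1$ means $a_1=1$ is handled by the initial conditions), the legality constraint $0\le d_m\le b_m$ gives exactly $d_m\in\{0,1\}$, which is why the statement splits into two cases.

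First I would record that, by the stability property and because the two sequences share their first $m-2$ entries of $a_i$ (hence of $b_i$), one has $V_i(d_1,\ldots,d_i)=V'_i(d_1,\ldots,d_i)$ for every $i=-1,0,\ldots,m-2$; in particular $V_{m-2}=V'_{m-2}$ and $V_{m-3}=V'_{m-3}$, and these common words depend only on $d_1,\ldots,d_{m-2}$. With this in hand, everything reduces to a single application of the recursion \eqref{eq}. For the new sequence, the last step reads
$$
V'_{m-1}(d_1,\ldots,d_{m-2},e)=(V'_{m-2})^{b'_{m-1}-e}V'_{m-3}(V'_{m-2})^{e}
=V_{m-2}^{\,a_{m-1}+1-e}V_{m-3}V_{m-2}^{\,e},
$$
since $b'_{m-1}=a_{m-1}+1$. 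For the old sequence I compute $V_{m-1}=V_{m-2}^{b_{m-1}-d_{m-1}}V_{m-3}V_{m-2}^{d_{m-1}}$ and then apply the last recursion with $b_m=1$.

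The heart of the argument is the final step, where I expand $V_m(d_1,\ldots,d_{m-1},d_m)=V_{m-1}^{\,1-d_m}V_{m-2}V_{m-1}^{\,d_m}$ (using $b_m=1$) for the two admissible values of $d_m$. When $d_m=0$ this is $V_{m-1}V_{m-2}=V_{m-2}^{b_{m-1}-d_{m-1}}V_{m-3}V_{m-2}^{d_{m-1}}\cdot V_{m-2}=V_{m-2}^{\,a_{m-1}-d_{m-1}}V_{m-3}V_{m-2}^{\,d_{m-1}+1}$, which matches the displayed formula for $V'_{m-1}(d_1,\ldots,d_{m-2},d_{m-1}+1)$ with $e=d_{m-1}+1$ (note $a_{m-1}+1-e=a_{m-1}-d_{m-1}$). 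When $d_m=1$ this is $V_{m-2}V_{m-1}=V_{m-2}\cdot V_{m-2}^{b_{m-1}-d_{m-1}}V_{m-3}V_{m-2}^{d_{m-1}}=V_{m-2}^{\,a_{m-1}+1-d_{m-1}}V_{m-3}V_{m-2}^{\,d_{m-1}}$, which matches $V'_{m-1}(d_1,\ldots,d_{m-2},d_{m-1})$ with $e=d_{m-1}$.

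The main obstacle—really the only point demanding care rather than routine bookkeeping—is verifying that the $b'$-values line up correctly at the boundary index and handling the interaction with the special role of the index $1$ in the definition $b_1=a_1-1$. As long as $m\ge 2$ so that the modified entry $a_{m-1}+1$ sits at an index $\ge 1$ distinct from where the $a_1-1$ adjustment interferes (and when $m-1=1$ one simply checks $b'_1=a'_1-1=a_2=a_{m-1}+1-1$ directly against the base words $V'_0=a$, $V'_{-1}=b$), the two expansions coincide termwise. Once the boundary bookkeeping is confirmed, both identities follow from the single computation above with no induction on $m$ actually needed beyond the stability statement for the shared initial segment.
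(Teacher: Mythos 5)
Your proof is correct and takes essentially the same route as the paper's: both use stability to identify $V_i=V'_i$ for $i\le m-2$, then expand $V_m=V_{m-1}^{1-d_m}V_{m-2}V_{m-1}^{d_m}$ (using $b_m=1$) in a single step and match exponents, the whole point being that $b'_{m-1}=b_{m-1}+1$. One harmless slip: your $m=2$ check should read $b'_1=a'_1-1=a_1$ (not $a_2$), and your intermediate identifications $b'_{m-1}=a_{m-1}+1$ and $b_{m-1}=a_{m-1}$ both fail when $m=2$; but since only the difference $b'_{m-1}-b_{m-1}=1$ enters the exponent comparison, the argument goes through unchanged.
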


\begin{proof} 
Let $q_i$ be as usual and write $q'_i$ for the corresponding numbers with respect to the sequence $a_1,\ldots,a_{m-2},a_{m-1}+1$.
By stability, we have $V_i=V'_i$ for $i=1,\ldots,m-2$. Next, we see that $V_{m-1}=V_{m-2}^{b_{m-1}-d_{m-1}}V_{m-3}V_{m-2}^{d_{m-1}}$. 
Thus, 
$$
V_{m}
(d_1,\ldots,d_{m-1},0)=V_{m-1}V_{m-2}  =V_{m-2}^{b_{m-1}-d_{m-1}}V_{m-3}V_{m-2}^{d_{m-1}+1} \\
 =
{V'}_{m-2}^{b_{m-1}-d_{m-1}}
V'_{m-3}
{V'}_{m-2}^{d_{m-1}+1} 
$$
$$
 =V'_{m-1}(d_1,\ldots,d_{m-2},d_{m-1}+1). 
 $$
Moreover, 
$$
V_{m}
(d_1,\ldots,d_{m-1},1)=V_{m-2}V_{m-1} =V_{m-2}^{b_{m-1}-d_{m-1}+1}V_{m-3}V_{m-2}^{d_{m-1}} 
$$
$$ =
{V'}_{m-2}^{b_{m-1}-d_{m-1}+1}
V'_{m-3}
{V'}_{m-2}^{d_{m-1}} 
$$
$$
 =V'_{m-1}(d_1,\ldots,d_{m-2},d_{m-1}),
$$
and the proof is complete.
\end{proof}

\begin{proof}[of Corollary \ref{H}] 
We assume that $m\geq 2$, since the case $m=1$ is easy to handle directly. We exclude the case $m=2, b_1 = 0$, which, by Lemma \ref{a1=1}, reduces to the case $m=1$.

1. By Theorem \ref{Vm}, $H_N(a_1,\ldots,a_m)$ is equal to the word $V_m(d_1,\ldots,d_m)$, where $N$ has the greedy representation $N=\sum_{1\leq i\leq m}d_iq_{i-1}$.
By Corollary \ref{Cword} and Lemma \ref{altlazyineq} (i), the word $H_i(a_1,\ldots,a_m)$ is a Christoffel word if and only if $N$ is equal to $q_{m-1}-1$ or to $q_m-1$.

2. We study now the number $t$ in Part (a) of the statement. Since $N=d_mq_{m-1}+\sum_{1\leq i\leq m-1}d_iq_{i-1}$, we get $\lfloor \frac{N}{q_{m-1}}\rfloor=d_m$ by Lemma \ref{greedylazyineq} 
(i). 

Next, let $j=\sum_{1\leq i\leq m-1}d_iq_{i-1}$. Then
$q_m-2-N=b_mq_{m-1}+q_{m-2}-2-d_mq_{m-1}-j=(b_m-d_m)q_{m-1}+q_{m-2}-2-j$. We have therefore $p:=1+\lfloor \frac{q_m-2-N}{ q_{m-1}}\rfloor=1+b_m-d_m+\lfloor \frac{q_{m-2}-2-j}{ q_{m-1}}\rfloor$. 

We show that the numerator in the latter fraction is always in the interval $[-q_{m-1},q_{m-1})$, so that the integer part of this fraction is either $-$1 or 0, and we give the condition when it is $0$.
We have indeed $-q_{m-1}\leq q_{m-2}-2-j$, since 
$j\leq q_{m-1}-1$ by Lemma \ref{greedylazyineq} 
(i), so that $j+2\leq q_{m-1}+1\leq q_{m-1}+q_{m-2}$. Moreover, $q_{m-2}-2-j< q_{m-1}$, since the 
sequence $q_i$ is increasing. Also, if $d_{m-1}>0$, then $j\geq q_{m-2}$, hence $q_{m-2}-2-j<0$; and if $d_{m-1}=0$, then $j=\sum_{1\leq i\leq m-2}d_iq_{i-1}$, and 
$q_{m-2}-2-j<0$ if and only if $j\geq q_{m-2}-1$, which, by Lemmas \ref{altlazyineq} (i) and 
\ref{greedylazyineq} (i), is equivalent to the fact 
that the sequence $d_1,\ldots,d_{m-2},0$ is alternating.

It follows that 
$p=b_m-d_m$, except if $d_{m-1}=0$ and if the sequence $d_1,\ldots,d_{m-2},0$ is not alternating, in which case 
$p=b_m-d_m+1$. Note that in all cases, $p=b_m-d_m$ or $p=b_m-d_m+1$.

3. We deduce from the previous part of the proof that $t=\min\{d_m,b_m-d_m\}=\ell$ (defined in Theorem \ref{bords}), except if $d_{m-1}=0$, if $b_m-d_m<d_m$, and if the sequence $d_1,\ldots,d_{m-2},0$ is not alternating, in which case $t=\ell+1$. This follows since if $b_m-d_m\geq d_m$, then $t=\min\{d_m,p\}=d_m=\min\{d_m,b_m-d_m\}=\ell$, because $p=b_m-d_m$ or $p=b_m-d_m+1$.

4. We assume that $a_m\geq 2$. Suppose that $0\leq N<q_{m-1}-1$. Then $d_m=0$ by Lemma \ref{greedylazyineq} (i). By Theorem \ref{bords} (v), we have $B_N=V_{m-1}^{b_m-1}V_{m-2}=H_N(a_1,\ldots,a_{m-1},a_m-1)$.

Suppose now that $q_{m-1}\leq N <q_m$. Then $d_m>0$ by Lemma \ref{greedylazyineq} (i). We are therefore in case (i), (ii), (iii) or (iv) of Theorem \ref{bords}. Note that $V_{m-1}=H_J(a_1,\ldots,a_{m-1})$, where $J=\sum_{1\leq i\leq m-1}d_iq_{i-1}$, so that $V_{m-1}=H_N(a_1,\ldots,a_{m-1})$, because $N$ is congruent to $J$ modulo $q_{m-1}$, the length of $V_{m-1}$. Thus we have to show that $B_N=V_{m-1}^t$.

In case (i), $B_N=V_{m-1}^t$; indeed, $d_m=b_m$ implies $d_{m-1}=0$ by greedyness, and since the sequence $d_1,\ldots,d_m$ is not alternating, $t=1$ by Part 3.

In case (ii), $B_N=V_{m-1}^\ell$, and $\ell=t$ by Part 3.

In case (iii), we have $d_{m-1}=0$ and $B_N=V_{m-1}^\ell$, except in the following case: $b_m-d_m<d_m$, the sequence $d_1,\ldots,d_{m-1}$ is 
not alternating, and then $B_N=V_{m-1}^{\ell+1}$. Thus $B_N=V_{n-1}^t$ by Part 3. 
%Indeed, either $b_m-d_m\geq d_m$ and then $\ell=d_m$ 
%and $t=d_m$ by Part 2. Or $b_m-d_m<d_m$; then $\ell=b_m-d_m$ and either the sequence is alternating and the exponent is $\ell$ and by Part 3, 
%$t=\min(d_m,b_m-d_m)=\ell$; or it is not alternating, the exponent is $\ell+1$ and by Part 2, $t=\min(d_m,b_m-d_m+1)=b_m+d_m+1=\ell+1$.

In case (iv), we have $B_N=V_{m-1}^\ell$ and $\ell=t$ by Part 3 since $d_{m-1}\neq 0$.

5.
We assume that $a_m=1$. Define $a'_i=a_i$ if $i=1,\ldots,m-2$ and $a'_{m-1}=a_{m-1}+1$. We denote 
by $q'_i$ and $M'_i$ the corresponding 
words and numbers. We have $q'_i=q_i$ for $i=1,\ldots,m-2$ and by (\ref{n-1}),  $q'_{m-1}=q_{m-1}+q_{m-2}
=q_m$. 

We have $H_N(a_1,\ldots,a_m)=C^N(M_m)=C^N(V_m(0,\ldots,0))=C^N(V'_{m-1}(0,\ldots,0,1))$ (by Lemma \ref{am=1}) $=C^N(C^{q_{m-2}}(M'_{m-1}))$ (by 
Theorem \ref{Vm} and because $q'_{m-2}=q_{m-2}$) $=C^{N+q_{m-2}}(M'_{m-1})=
H_{N+q_{m-2}}(a'_1,\ldots,a'_{m-1})$. Thus $H_N(a_1,\ldots,a_m)=H_{N'}(a'_1,\ldots,a'_{m-1})$ where $N'=N+q_{m-2}$.

Suppose that $0\leq N<q_{m-1}-1$. Then $q'_{m-2}=q_{m-2}\leq N'<q_{m-1}+q_{m-2}-1=q_m-1=q'_{m-1}-1$. It follows from Case (a) that $B_N=H_{N'}
(a'_1,\ldots,a'_{m-2})^t$, with $t=\min\{\lfloor \frac{N'}{q'_{m-2}}\rfloor,1+\lfloor \frac{q'_{m-1}-2-N'}{ q'_{m-2}}\rfloor \}$.
Note that we have
$H_{N'}(a'_1,\ldots,a'_{m-2})=C^{N'}(H_0(a'_1,\ldots,a'_{m-2}))=C^{N+q_{m-2}}
(H_0(a_1,\ldots,a_{m-2}))$ 
$=C^N(H_0(a_1,\ldots,a_{m-2}))$
(since the word is of length $q_{m-2}$) $=H_N(a_1,\ldots,a_{m-2})$; moreover, 
$t=\min\{\lfloor \frac{N+q_{m-2}}{q_{m-2}}\rfloor,1+\lfloor \frac{q_{m-1}+q_{m-2}-2-N-q_{m-2}}{ q_{m-2}}\rfloor\}$, which settles this case.

Suppose now that $q_{m-1}\leq N<q_{m}-1$. Then we have $H_N(a_1,\ldots,a_m)=H_{N'}(a'_1,\ldots,a'_{m-1})$ $=H_{N''}(a'_1,\ldots,a'_{m-1})$, where $N''=N'-q_m=N+q_{m-2}-q_m=N-q_{m-1}$, since the words have length $q_m$. Now $0\leq N''<q_{m-2}-1$.
Hence by the first part, $B_N= H_{N''}(a'_1,\ldots,a'_{m-1}-1)=C^{N''}(H_0(a_1,\ldots,a_{m-1}))=C^N(H_0(a_1,\ldots,a_{m-1})) =H_N(a_0,\ldots,a_{m-1})$, 
since the word has length $q_{m-1}$.
\end{proof}

\subsection{Proof of Theorem \ref{bords}}\label{proofBords}

We keep our notation and consider the word
$$
V_m = V_m (d_1, \ldots , d_m),
$$
where 
\begin{equation}\label{legal}
N=d_1 q_0 +  \ldots +  d_m q_{m-1}
\end{equation} is a legal representation. 
We keep in mind several facts: 

\smallskip

$(i)$ If the words $X, Y$ satisfy $XY = YX$, then $X$ and $Y$ are both integral powers of a same word. 

$(ii)$  %Suppose that $V_k$ is of length at least $2$. 
We know that $V_m$ is a primitive word, that is, there do not exist a word $Z$ and an integer 
$\ell \ge 2$ such that $V_m = Z^\ell$; indeed, by Lemma \ref{endoV}, this word is part of a basis of the free group, so cannot be a nontrivial power.
Moreover, if there are words $X, Y$ such that 
$X V_m Y = V_m V_m$, then $X$ or $Y$ is empty. 

%The proofs of Lemma \ref{overlap2} and Theorem \ref{bords} use repeatedly the following facts:

$(iii)$ Any word of the form $V_m^u V_{m-1} V_m^v$ with $u, v$ nonnegative integers, is primitive. This follows for the same reason as in (ii).   %%y [follows]

$(iv)$ If the length of $W$ satisfies $1 \le |W| < |V_m|$, then $W V_m$ is not 
a prefix of $V_m V_m$, nor is $V_m W$ a suffix of $V_m V_m$. 

$(v)$ The words $V_m V_{m-1}$ and $V_{m-1} V_m$ are different. 

\smallskip

For $k= 0, \ldots , m-1$, we
let $W_k$ (resp., $X_k$) denote the longest common prefix 
(resp., suffix) of $V_{k+1} V_k$ and $V_k V_{k+1}$. 
%We consider a legal representation. 
%QUELQUES REFERENCES sur ce lemme?

\begin{lemma}   \label{commonpref}
%For $k \ge 0$, put $Z_k = ba$ if $k$ is even and $Z_k = ab$ if $k$ is odd.
Put $Z_1 = ab$ and $Z_{-1} = ba$. 
Let $k= 0, \ldots , m-1$ be an integer. 
Then,
$$
V_{k+1} V_k = W_k Z_{(-1)^{k+1}} X_k, \quad V_k V_{k+1} = W_k Z_{(-1)^k} X_k. 
$$
The word $W_k$ factors as
$$
W_k = V_{k}^{b_{k+1} - d_{k+1}} V_{k-1}^{b_{k} - d_{k}} \cdots V_0^{b_1 - d_1}
$$
and its length $w_k$ is given by
$$
w_k = \sum_{j=1}^{k+1} (b_j - d_j) q_{j-1}. 
$$
The word $X_k$ factors as
$$
X_k = V_0^{d_1} \cdots V_{k-1}^{d_{k}} V_{k}^{d_{k+1}} 
$$
and its length $x_k$ is given by
$$
x_k = \sum_{j=1}^{k+1} d_j q_{j-1}. 
$$
\end{lemma}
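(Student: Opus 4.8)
The plan is to prove the two displayed factorizations by induction on $k$ and to read off the length formulas and the identification of $W_k, X_k$ with the longest common prefix and suffix as immediate consequences. Throughout I use that the representation \eqref{legal} is legal, so that every exponent $b_i - d_i$ and $d_i$ is nonnegative and the $V_i$ are genuine words of $A^*$, with $|V_i| = q_i$ by Lemma \ref{length}. For brevity write
$$
P_k = V_k^{b_{k+1}-d_{k+1}} V_{k-1}^{b_k - d_k}\cdots V_0^{b_1 - d_1}, \qquad Q_k = V_0^{d_1}\cdots V_{k-1}^{d_k}V_k^{d_{k+1}}
$$
for the two products appearing in the statement; the goal is to show $V_{k+1}V_k = P_k Z_{(-1)^{k+1}} Q_k$ and $V_k V_{k+1} = P_k Z_{(-1)^k} Q_k$, since then $P_k, Q_k$ will turn out to be the asserted $W_k, X_k$.

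First I would settle the base case $k=0$. Here $V_1 = a^{b_1-d_1} b a^{d_1}$, so that
$$
V_1 V_0 = a^{b_1-d_1}(ba)\,a^{d_1}, \qquad V_0 V_1 = a^{b_1 - d_1}(ab)\, a^{d_1},
$$
which is exactly $P_0 Z_{-1} Q_0$ and $P_0 Z_1 Q_0$ with $P_0 = a^{b_1-d_1}$ and $Q_0 = a^{d_1}$ (this remains correct when $b_1 = 0$, where $P_0$ is empty and $V_1=b$).

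For the inductive step I would use the defining recursion $V_{k+1} = V_k^{b_{k+1}-d_{k+1}} V_{k-1} V_k^{d_{k+1}}$ to pull the outer powers of $V_k$ to the two ends:
$$
V_{k+1} V_k = V_k^{b_{k+1}-d_{k+1}}(V_{k-1}V_k)V_k^{d_{k+1}}, \qquad V_k V_{k+1} = V_k^{b_{k+1}-d_{k+1}}(V_k V_{k-1})V_k^{d_{k+1}}.
$$
Thus the difference between the two products is localized in the central factors $V_{k-1}V_k$ versus $V_k V_{k-1}$, which is precisely the comparison treated at level $k-1$. Substituting the induction hypothesis $V_{k-1}V_k = P_{k-1}Z_{(-1)^{k-1}}Q_{k-1}$ and $V_k V_{k-1}= P_{k-1}Z_{(-1)^k}Q_{k-1}$, and using the identities $P_k = V_k^{b_{k+1}-d_{k+1}}P_{k-1}$, $Q_k = Q_{k-1}V_k^{d_{k+1}}$ together with $(-1)^{k-1} = (-1)^{k+1}$, yields the two factorizations at level $k$.

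The remaining point — and the real content of the argument — is to certify that $P_k$ and $Q_k$ are genuinely the \emph{longest} common prefix and suffix, not merely common ones. This is immediate from the factorizations: both $V_{k+1}V_k$ and $V_k V_{k+1}$ equal $P_k$ followed by one of $ab$, $ba$ followed by $Q_k$, and since $ab$ and $ba$ differ in their first letter and in their last letter, the two words already disagree at the position just after $P_k$ and at the position just before $Q_k$. Hence $W_k = P_k$ and $X_k = Q_k$ exactly. I expect no serious obstacle here beyond keeping track of the degenerate exponents (empty powers when some $b_i-d_i$ or $d_i$ vanishes), which cause no problem. Finally the length formulas follow by taking lengths in the product definitions and using $|V_j| = q_j$, giving $w_k = \sum_{j=1}^{k+1}(b_j - d_j)q_{j-1}$ and $x_k = \sum_{j=1}^{k+1}d_j q_{j-1}$; as a consistency check that the factorization has no overlap, $w_k + x_k + 2 = \sum_{j=1}^{k+1} b_j q_{j-1} + 2 = q_{k+1}+q_k = |V_{k+1}V_k|$ by Proposition \ref{greedylazy}(ii).
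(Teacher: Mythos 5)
Your proof is correct and follows essentially the same route as the paper: the base case $k=0$ computed explicitly, then induction pulling the outer powers $V_k^{b_{k+1}-d_{k+1}}$ and $V_k^{d_{k+1}}$ out of the recursion so that the comparison reduces to $V_{k-1}V_k$ versus $V_kV_{k-1}$. Your only additions --- spelling out that the factorizations certify \emph{maximality} of the common prefix and suffix because $ab$ and $ba$ differ in both their first and last letters, and the length consistency check --- are points the paper leaves implicit, and they are both sound.
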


Observe that $w_k + x_k = \sum_{j=1}^{k+1} b_j q_{j-1} = q_{k+1} + q_k - 2$ (Lemma \ref{greedylazyineq} (iii)). 

\begin{proof}
We prove the lemma by induction on $k$. Recall that $V_{-1} = b$, $V_0 = a$, 
and $V_1 = V_0^{b_1 - d_1} V_{-1} V_0^{d_1}=a^{b_1 - d_1} b a^{d_1}$. 
This implies that $V_0 V_1 = a^{b_1 -d_1 + 1} b a^{d_1}$ and $V_1V_0=a^{b_1 - d_1} b a^{d_1+1}$. Thus
$$
W_0 = a^{b_1 - d_1}=V_0^{b_1 - d_1}, \quad w_0 = b_1 - d_1, 
\quad X_0 = a^{d_1} = V_0^{d_1}, \quad x_0 = d_1,
$$
and
$$
V_1 V_0 = W_0 ba X_0 = W_0 Z_{-1} X_0, \quad V_0 V_1 = W_0 ab X_0 = W_0 Z_1 X_0.
$$
This shows that the lemma holds for $k=0$. 
Now let $k \ge 0$ be an integer with $k < m-1$.
Assume that $V_{k+1} V_k = W_k Z_{(-1)^{k+1}} X_k$ and $V_k V_{k+1} = W_k Z_{(-1)^k} X_k$.
%Assume first that $b_{k+2} - d_{k+2} \ge 1$. 

Since $V_{k+2} = V_{k+1}^{b_{k+2} - d_{k+2}} V_k V_{k+1}^{d_{k+2}}$, we get from our inductive 
assumption that
$$
V_{k+2} V_{k+1} = V_{k+1}^{b_{k+2} - d_{k+2}} V_k  V_{k+1} V_{k+1}^{d_{k+2}} 
=V_{k+1}^{b_{k+2} - d_{k+2}} W_k Z_{(-1)^k} X_k V_{k+1}^{d_{k+2}}
$$
and 
$$
V_{k+1} V_{k+2} = V_{k+1}^{b_{k+2} - d_{k+2}} V_{k+1} V_k V_{k+1}^{d_{k+2}}
=V_{k+1}^{b_{k+2} - d_{k+2}} W_k Z_{(-1)^{k+1}} X_k V_{k+1}^{d_{k+2}}.
$$
This shows that
$$
W_{k+1} = V_{k+1}^{b_{k+2} - d_{k+2}} W_k, \quad X_{k+1} = X_k V_{k+1}^{d_{k+2}}.
$$
Furthermore, 
$$
V_{k+2} V_{k+1} = W_{k+1} Z_{(-1)^k} X_{k+1} = W_{k+1} Z_{(-1)^{k+2}} X_{k+1}
$$
and
$$
V_{k+1} V_{k+2} = W_{k+1} Z_{(-1)^{k+1}} X_{k+1}. 
$$
Since $q_j$ is the length of $V_j$, this proves the lemma. 
\end{proof}

\begin{lemma} \label{XWpal}
Let $k=0, \ldots , m-1$ be an integer. 
With the above notation, the word $X_k W_k$ can be expressed as
$$
X_k W_k = V_0^{d_1} \cdots V_{k-1}^{d_{k}} V_{k}^{b_{k+1}} V_{k-1}^{b_{k} - d_{k}} \cdots V_0^{b_1 - d_1}
$$
and is a palindrome. 
More precisely, it is the central word of the conjugation class of $V_kV_{k+1}$.
\end{lemma}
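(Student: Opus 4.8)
First I would dispose of the factorization, which is immediate from Lemma~\ref{commonpref}: concatenating $X_k=V_0^{d_1}\cdots V_{k-1}^{d_k}V_k^{d_{k+1}}$ and $W_k=V_k^{b_{k+1}-d_{k+1}}V_{k-1}^{b_k-d_k}\cdots V_0^{b_1-d_1}$, the two adjacent powers of $V_k$ amalgamate into $V_k^{d_{k+1}}V_k^{b_{k+1}-d_{k+1}}=V_k^{b_{k+1}}$, which gives the stated expression; in particular $|X_kW_k|=x_k+w_k=q_{k+1}+q_k-2$ by the remark following Lemma~\ref{commonpref}. It then remains to identify $X_kW_k$ as the central word of the class of $V_kV_{k+1}$, after which palindromicity is automatic, since central words are palindromes by definition. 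The case $k=0$, where $X_0W_0=a^{b_1}$, is clear, so I assume $k\ge 1$.

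The plan is to first place $V_kV_{k+1}$ inside a known Christoffel class. Put $a'=(a_1,\ldots,a_k,a_{k+1}+1)$ and let $V'_i$ be the words attached to $a'$. Applying Lemma~\ref{am=1} to the sequence $(a_1,\ldots,a_{k+1},1)$ (whose last entry is $1$, and whose associated shortened sequence is exactly $a'$) yields
$$V_kV_{k+1}=V'_{k+1}(d_1,\ldots,d_k,d_{k+1}),\qquad V_{k+1}V_k=V'_{k+1}(d_1,\ldots,d_k,d_{k+1}+1),$$
both with legal digits. By Theorem~\ref{Vm} these two words lie in the Christoffel class $\mathcal{C}$ attached to $a'$, whose members have length $q'_{k+1}=q_{k+1}+q_k=:n$; thus the conjugation class of $V_kV_{k+1}$ is $\mathcal{C}$, and its central word $p'$ has length $n-2$. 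By Corollary~\ref{standard} the standard words of $\mathcal{C}$ are $p'ab$ and $p'ba$, and these belong to $\mathcal{C}$ by the facts recalled in Section~\ref{Cword0}.

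Next I would exploit Lemma~\ref{commonpref} again. Since $V_kV_{k+1}=W_kZ_{(-1)^k}X_k$ and $V_{k+1}V_k=W_kZ_{(-1)^{k+1}}X_k$ share the prefix $W_k$ and the suffix $X_k$, moving the suffix $X_k$ to the front produces the two conjugates
$$\Pi=X_kW_kZ_{(-1)^k},\qquad \Pi'=X_kW_kZ_{(-1)^{k+1}}$$
lying in $\mathcal{C}$. They are distinct, as $Z_1=ab\neq ba=Z_{-1}$, and they share their prefix $X_kW_k$ of length $n-2$; likewise $p'ab$ and $p'ba$ are distinct conjugates sharing the prefix $p'$ of length $n-2$. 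The decisive point is that such a pair is unique: a conjugate of a Christoffel word of length $n$ has exactly $n-1$ circular factors of length $n-2$ (see \cite[Theorem 15.3.1]{Re}), so among its $n$ conjugates exactly two agree on their prefix of length $n-2$. Hence $\{\Pi,\Pi'\}=\{p'ab,p'ba\}$, forcing $X_kW_k=p'$, the central word of $\mathcal{C}$, which is a palindrome.

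The delicate step is this uniqueness argument, which I rest on the circular-factor count quoted in the introduction. If one prefers to stay inside the present formalism, an alternative is available: Proposition~\ref{mirror} gives $\widetilde{X_kW_k}=X_kW_k(b_1-d_1,\ldots,b_{k+1}-d_{k+1})$, so it would suffice to prove that $X_kW_k$ does not depend on the digits; specializing to $d=0$ then yields $X_kW_k=M_k^{b_{k+1}}M_{k-1}^{b_k}\cdots M_0^{b_1}$, a palindrome by Lemma~\ref{pal}. This independence can be obtained by induction on $k$ from $X_k=X_{k-1}V_k^{d_{k+1}}$, $W_k=V_k^{b_{k+1}-d_{k+1}}W_{k-1}$ and $V_k=V_{k-1}^{b_k-d_k}V_{k-2}V_{k-1}^{d_k}$, which let adjacent powers of $V_{k-1}$ coalesce so that the dependence on $d_k$ cancels; propagating this coalescence through every level is the computational core of that route, and the part I expect to be most laborious.
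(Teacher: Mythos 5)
Your proposal is correct, but it reaches the conclusion by a genuinely different route than the paper. The paper's proof is a two-line application of Pirillo's theorem (\cite{P}, \cite[Theorem 15.2.5]{Re}): since $V_{k+1}V_k = W_k Z_{(-1)^{k+1}} X_k$ and $V_k V_{k+1} = W_k Z_{(-1)^{k}} X_k$ are conjugate, cyclic shifting shows that $aX_kW_kb$ and $bX_kW_ka$ are both conjugates of $V_kV_{k+1}$, hence conjugate to each other, and Pirillo's theorem immediately yields that $X_kW_k$ is a central word, hence a palindrome, and the central word of the class. You instead avoid Pirillo entirely: you first identify the class of $V_kV_{k+1}$ explicitly, via Lemma \ref{am=1} applied to $(a_1,\ldots,a_{k+1},1)$ together with Theorem \ref{Vm}, as the Christoffel class attached to $(a_1,\ldots,a_k,a_{k+1}+1)$ of length $n=q_{k+1}+q_k$; then you pin down $X_kW_k$ by a pigeonhole argument on circular factors: a conjugate of a Christoffel word of length $n$ has exactly $n-1$ circular factors of length $n-2$ (\cite[Theorem 15.3.1]{Re}), so exactly one unordered pair of its $n$ distinct conjugates shares a length-$(n-2)$ prefix, and both $\{X_kW_kZ_{(-1)^k}, X_kW_kZ_{(-1)^{k+1}}\}$ and $\{p'ab, p'ba\}$ are such pairs, forcing $X_kW_k = p'$. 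I checked the details: the legality of the digit $d_{k+1}+1$ for the modified sequence (since $b'_{k+1}=a_{k+1}+1$), the surjectivity needed for the pigeonhole, and the base case $k=0$ all hold, and your argument buys an explicit identification of the class of $V_kV_{k+1}$ (consonant with how Lemma \ref{am=1} is exploited elsewhere in the paper), at the cost of invoking the circular-factor characterization where the paper needs only Pirillo's theorem. Your fallback sketch via Proposition \ref{mirror} and digit-independence is unnecessary for correctness and is, as you say yourself, only an outline; the main argument stands without it.
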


\begin{proof} 
The expression of $X_k W_k$ is an immediate consequence of Lemma \ref{commonpref}. 
Recall Pirillo's theorem: if the words $aub, bua$ are conjugate, 
then $u$ is a central word (\cite{P}, \cite[Theorem 15.2.5]{Re}).
By Lemma \ref{commonpref}, the words $aX_kW_kb$ and $bX_kW_ka$ are conjugate. This proves the lemma.
%
%
%Set $a_{k+2} = b_{k+2} = 1$. 
%Write $M_k$ and $M_{k+1}$ for the standard words that are conjugated to $V_k$ and $V_{k+1}$.
%Observe that 
%$$
%V_{k+1} V_k = V_{k+2} (d_1, \ldots , d_{k+1}, 0), \quad 
%V_{k} V_{k+1} = V_{k+2} (d_1, \ldots , d_{k+1}, 1).
%$$ 
%Thus, if $V_{k+1} V_k$ is the $N$-th conjugate of $M_{k+1} M_k$, then 
%$V_k V_{k+1}$ is the $(N + q_{k+1})$-th conjugate of $M_{k+1} M_k$. 
%Consequently(PAS CLAIR POUR MOI), $V_{k+1} V_k$ and $V_k V_{k+1}$ are consecutive lines 
%in the Burrows--Wheeler array (REFERENCE ICI). We then get that $X_k W_k$ is the 
%central palindrome associated with the Christoffel word $V_{k+2} ( \ldots , 0, b_k, 0, 1)$ 
%(or $V_{k+2} (\ldots , b_{k-1}, 0, b_{k+1}, 0)$ if ones prefers). 
\end{proof}

%It follows from Lemmas \ref{commonpref} and \ref{XWpal} that 
%$$
%X_k W_k = V_0^{d_1} \cdots V_{k-1}^{d_{k}} V_{k}^{b_{k+1}} V_{k-1}^{b_{k} - d_{k}} \cdots V_0^{b_1 - d_1}
%$$
%is a palindrome. This 

Lemma \ref{XWpal} extends Lemma \ref{pal}, which 
%states that 
%$$
%M_{k-1}^c M_{k-2}^{c_{k-1}} \ldots M_1^{c_2} M_0^{c_1}
%$$
%is a palindrome? This 
corresponds to the case $d_1 = \ldots = d_k = 0$. 
%Yes! Actually, $b_{k+1}$ is arbitrary, so the word
%$$
%V_0^{d_1} \cdots V_{k-1}^{d_{k}} V_{k}^{c} V_{k-1}^{b_{k} - d_{k}} \cdots V_0^{b_1 - d_1}
%$$
%is a palindrome for every nonnegative integer $c$ (also, in Lemma 7.8, $c$ can be arbitrarily large). 

We display a consequence of Lemma \ref{commonpref}.

\begin{corollary}\label{prefixVV}
If $V_{k+1}$ is a prefix of $V_k V_{k+1}$, then $d_{k+1} = 0$. 
If $V_{k+1}$ is a suffix of $V_{k+1} V_k$, then $d_{k+1} = b_{k+1}$. 
\end{corollary}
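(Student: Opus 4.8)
The plan is to exploit the fact that, by their very definition, $W_k$ and $X_k$ are the \emph{longest} common prefix and the \emph{longest} common suffix of $V_{k+1}V_k$ and $V_kV_{k+1}$, together with the length identity $w_k + x_k = q_{k+1} + q_k - 2$ recorded just after Lemma \ref{commonpref}. The point is that both hypotheses say that $V_{k+1}$ is a common prefix (respectively, common suffix) of the two words $V_{k+1}V_k$ and $V_kV_{k+1}$, so its length $q_{k+1}$ is bounded by $w_k$ (respectively, $x_k$); the identity then converts this into an inequality on a single digit.

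For the first assertion, I would argue as follows. The word $V_{k+1}$ is trivially a prefix of $V_{k+1}V_k$, so if $V_{k+1}$ is also a prefix of $V_kV_{k+1}$, then $V_{k+1}$ is a common prefix of $V_{k+1}V_k$ and $V_kV_{k+1}$. Since $W_k$ is their longest common prefix, this forces $q_{k+1} = |V_{k+1}| \le |W_k| = w_k$. Writing $w_k = q_{k+1} + q_k - 2 - x_k$ via the identity above, I get $x_k \le q_k - 2$. But $x_k = \sum_{j=1}^{k+1} d_j q_{j-1} \ge d_{k+1} q_k$, so $d_{k+1} q_k \le q_k - 2 < q_k$; as $d_{k+1}$ is a nonnegative integer, this yields $d_{k+1} = 0$.

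For the second assertion I would proceed symmetrically. Now $V_{k+1}$ is trivially a suffix of $V_kV_{k+1}$, so if it is also a suffix of $V_{k+1}V_k$, it is a common suffix, whence $q_{k+1} \le |X_k| = x_k$. The same identity gives $w_k = q_{k+1} + q_k - 2 - x_k \le q_k - 2$, while the factorization $W_k = V_k^{b_{k+1}-d_{k+1}} V_{k-1}^{b_k - d_k} \cdots V_0^{b_1 - d_1}$ shows $w_k \ge (b_{k+1} - d_{k+1}) q_k$. Hence $(b_{k+1} - d_{k+1}) q_k < q_k$, and since $0 \le d_{k+1} \le b_{k+1}$ by legality of the representation, the nonnegative integer $b_{k+1} - d_{k+1}$ must vanish, giving $d_{k+1} = b_{k+1}$.

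I do not anticipate a genuine obstacle here: the whole content is the bookkeeping already assembled in Lemma \ref{commonpref}. The one step worth stating explicitly is that the longest common prefix has length \emph{exactly} $w_k$ (and the longest common suffix length exactly $x_k$), which is guaranteed because $Z_{(-1)^{k+1}}$ and $Z_{(-1)^k}$ are $ba$ and $ab$ in some order and therefore differ in their first and last letters; this is what makes the inequality $q_{k+1} \le w_k$ (respectively $q_{k+1} \le x_k$) legitimate rather than merely $q_{k+1} \le w_k + 1$.
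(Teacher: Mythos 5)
Your proof is correct and is essentially the paper's own argument: the paper likewise observes that the hypothesis makes $V_{k+1}$ a common prefix (resp.\ suffix), so $w_k \ge q_{k+1}$ (resp.\ $x_k \ge q_{k+1}$), then uses the identity $w_k + x_k = q_{k+1} + q_k - 2$ to get $x_k \le q_k - 2$ (resp.\ $w_k \le q_k - 2$) and reads off $d_{k+1} = 0$ (resp.\ $b_{k+1} - d_{k+1} = 0$) from the factorizations in Lemma \ref{commonpref}. Your closing remark about $Z_{\pm 1}$ differing in their first and last letters is a sound (if strictly redundant) sanity check, since $W_k$ and $X_k$ are defined as the longest common prefix and suffix.
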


\begin{proof}
If $V_{k+1}$ is a prefix of $V_k V_{k+1}$, then the common prefix $W_k$ of $V_kV_{k+1}$ and $V_{k+1}V_k$ is of length
$w_k \ge q_{k+1}$; since $w_k+x_k=q_{k+1}+q_k-2$, we obtain $x_k \le q_k - 2$. 
By Lemma \ref{commonpref} this implies that $d_{k+1} = 0$. Similarly, if $V_{k+1}$ is a suffix of $V_{k+1} V_k$, then 
$w_k \le q_k - 2$ and $b_{k+1} - d_{k+1} = 0$. 
\end{proof}

%We need several auxiliary results.
%We let $W_k$ denote the longest common prefix of $V_{k+1} V_k$ and $V_k V_{k+1}$. 
%Here, we assume that the digits $d_k$ of $N = \sum_{1 \le i \le m} d_i q_{i-1}$ are given by
%any legal representation and that $0 \le N \le q_m - 1$.
%
%\begin{lemma}   \label{commonpref}
%For $k \ge 0$ the word $W_k$ factors as
%$$
%W_k = V_{k}^{b_{k+1} - d_{k+1}} V_{k-1}^{b_{k} - d_{k}} \cdots V_0^{b_1 - d_1}
%$$
%and its length $w_k$ is given by
%$$
%w_k = \sum_{j=1}^{k+1} (b_j - d_j) q_{j-1}. 
%$$
%\end{lemma}
%
%\begin{proof}
%We prove the lemma by induction on $k$. Recall that $V_{-1} = b$, $V_0 = a$, and $V_1 = V_0^{b_1 - d_1} V_{-1} V_0^{d_1}=a^{b_1 - d_1} b a^{d_1}$. 
%This implies that $V_0 V_1 = a^{b_1 -d_1 + 1} b a^{d_1}$ and $V_1V_0=a^{b_1 - d_1} b a^{d_1+1}$. Thus
%$$
%W_0 = a^{b_1 - d_1}=V_0^{b_1 - d_1}, \quad w_0 = b_1 - d_1.
%$$
%Now let $k \ge 0$ be any integer. 
%
%
%
%Since $V_{k+2} = V_{k+1}^{b_{k+2} - d_{k+2}} V_k V_{k+1}^{d_{k+2}}$, we get 
%$$
%V_{k+2} V_{k+1} = V_{k+1}^{b_{k+2} - d_{k+2}} V_k V_{k+1}^{d_{k+2}+1}=V_{k+1}^{b_{k+2} - d_{k+2}}V_kV_{k+1}\cdots
%$$
%and 
%$$
%V_{k+1} V_{k+2} = V_{k+1}^{b_{k+2} - d_{k+2} + 1} V_k V_{k+1}^{d_{k+2}}=V_{k+1}^{b_{k+2} - d_{k+2}}V_{k+1}V_k\cdots.
%$$
%Since $V_{k}V_{k+1}$ and $V_{k+1}V_k$ have the same length, and are distinct, the longest common prefix of the two displayed words is
%$$
%W_{k+1} = V_{k+1}^{b_{k+2} - d_{k+2}} W_k.
%$$
%Since $q_j$ is the length of $V_j$, this proves the lemma. 
%\end{proof}
Recall that alternating sequences have been defined in Section \ref{Onumer}.

\begin{corollary}    \label{pref}
Suppose that the representation (\ref{legal}) is greedy.
The word $V_{m-1} (d_1, \ldots , d_{m-1})$ is a prefix of 
$V_m (d_1, \ldots , d_m)$ if and only if $V_m (d_1, \ldots , d_m)$ is not the
Christoffel word $V_m (  \ldots , 0, b_{m-2}, 0, b_m)$. 
\end{corollary}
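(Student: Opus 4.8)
The plan is to split on the value of $d_m$ and exploit the factorization $V_m = V_{m-1}^{b_m - d_m} V_{m-2} V_{m-1}^{d_m}$ from \eqref{eq} together with Lemma \ref{commonpref}. First, if $d_m < b_m$, then $b_m - d_m \ge 1$, so $V_m$ visibly begins with $V_{m-1}$ and $V_{m-1}$ is a prefix. On the other hand, greedy representations are unique (Proposition \ref{greedylazy} (i)) and the map sending a greedy $N$ to $V_m$ is a bijection onto the conjugation class (Theorem \ref{Vm}, Corollary \ref{Cword}); since the named Christoffel word $V_m(\ldots,0,b_{m-2},0,b_m)$ has last greedy digit $b_m$, our word $V_m$ (with $d_m<b_m$) differs from it. Thus both sides of the equivalence hold throughout the range $d_m<b_m$.

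The substantial case is $d_m = b_m$, where greediness \eqref{greedy} forces $d_{m-1}=0$ and $V_m = V_{m-2} V_{m-1}^{b_m}$. Since $b_m = a_m \ge 1$, the prefix of $V_m$ of length $|V_{m-1}| = q_{m-1}$ already lies inside $V_{m-2}V_{m-1}$ (a length comparison), so $V_{m-1}$ is a prefix of $V_m$ if and only if it is a prefix of $V_{m-2}V_{m-1}$. As $V_{m-1}$ is automatically a prefix of $V_{m-1}V_{m-2}$, this holds if and only if $V_{m-1}$ is a common prefix of $V_{m-2}V_{m-1}$ and $V_{m-1}V_{m-2}$, that is, if and only if $q_{m-1} \le w_{m-2}$, where $w_{m-2}$ is the length of the longest common prefix $W_{m-2}$ of Lemma \ref{commonpref}.

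It then remains to convert this length inequality into a digit condition. Using the identity $w_{m-2} + x_{m-2} = q_{m-1} + q_{m-2} - 2$ recorded after Lemma \ref{commonpref}, the inequality $q_{m-1} \le w_{m-2}$ is equivalent to $x_{m-2} \le q_{m-2} - 2$, where $x_{m-2} = \sum_{j=1}^{m-1} d_j q_{j-1} = \sum_{j=1}^{m-2} d_j q_{j-1}$ because $d_{m-1}=0$. By stability, $d_1,\ldots,d_{m-2}$ is itself greedy, so $x_{m-2} \le q_{m-2}-1$, with equality exactly when it is the greedy representation of $q_{m-2}-1$, which by Lemma \ref{altlazyineq} (i) is the alternating sequence ending in $d_{m-2}=b_{m-2}$. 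Hence $V_{m-1}$ fails to be a prefix of $V_m$ precisely when $x_{m-2} = q_{m-2}-1$, i.e.\ when $d_1,\ldots,d_{m-2}$ is that alternating sequence; together with $d_{m-1}=0$ and $d_m=b_m$ this says the full sequence is $(\ldots,0,b_{m-2},0,b_m)$, which by Corollary \ref{Cword} is exactly the Christoffel word of the statement.

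The main obstacle is this final translation from the combinatorial statement ``$V_{m-1}$ is a prefix of $V_m$'' to the exact arithmetic condition on $x_{m-2}$: identifying the prefix question with $q_{m-1}\le w_{m-2}$ relies on Lemma \ref{commonpref}, and pinning the equality case $x_{m-2}=q_{m-2}-1$ to the alternating representation relies on Lemma \ref{altlazyineq} (i). One should also verify the degenerate case $m=2$, where the truncated sequence $d_1,\ldots,d_{m-2}$ is empty and $x_{m-2}=q_{m-2}-1=0$ holds vacuously, in agreement with the direct computation that $V_1$ is not a prefix of $V_2 = V_0 V_1^{b_2}$.
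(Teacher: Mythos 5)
Your proof is correct and follows essentially the paper's route: the paper likewise reduces the prefix question to a length inequality for the common prefix of Lemma \ref{commonpref} and resolves it via the alternating-sequence characterization of the extremal greedy representation. The only difference is cosmetic: the paper avoids your case split on $d_m$ by working one level up, observing that $V_{m-1}$ is a prefix of $V_m$ if and only if $w_{m-1}=\sum_{j=1}^{m}(b_j-d_j)q_{j-1}\geq q_{m-1}$, and then invoking Lemma \ref{Ost-C} directly, whose content your hand computation at index $m-2$ (via Lemma \ref{greedylazyineq} (i), Lemma \ref{altlazyineq} (i), and uniqueness of greedy representations) re-derives in the subcase $d_m=b_m$, $d_{m-1}=0$.
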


\begin{proof}
Observe that $V_{m-1}$ is a prefix of $V_m$ if and only if the common prefix $W_{m-1}$ 
of $V_m V_{m-1}$ and $V_{m-1} V_m$ has length at least $q_{m-1}$. In view of 
Lemma \ref{commonpref}, this common prefix has length 
$$
w_{m-1}=\sum_{j=1}^{m} (b_j - d_j) q_{j-1}. 
$$
The lemma then follows from Lemma \ref{Ost-C} and Corollary \ref{Cword}.
\end{proof}

\begin{corollary}\label{pref1} Suppose that the representation (\ref{legal}) is greedy.
The word $V_{m-1}$ is not a  prefix of the word $V_{m-2} V_{m-1}$ if and only if $d_{m-1}=0$ and the sequence $d_1,\ldots,d_{m-1}$ is alternating.
\end{corollary}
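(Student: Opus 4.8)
The plan is to follow the template of the proof of Corollary~\ref{pref}: turn the combinatorial condition ``$V_{m-1}$ is a prefix of $V_{m-2}V_{m-1}$'' into an inequality on the length $w_{m-2}$ of the common prefix supplied by Lemma~\ref{commonpref}, and then read the answer off the arithmetic of greedy Ostrowski digits. Throughout I assume $m\ge 2$, so that $W_{m-2}$ is defined, and I work with the truncated sequence $d_1,\ldots,d_{m-1}$, which is again a greedy representation because condition (\ref{greedy}) is local and hence stable under truncation. Its value is $N'=\sum_{j=1}^{m-1}d_jq_{j-1}\in\{0,\ldots,q_{m-1}-1\}$ by Proposition~\ref{greedylazy}(i).

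First I would establish the only genuinely combinatorial step, namely that $V_{m-1}$ is a prefix of $V_{m-2}V_{m-1}$ if and only if $w_{m-2}\ge q_{m-1}$. Since $V_{m-1}$ is trivially a prefix of $V_{m-1}V_{m-2}$, if it is also a prefix of $V_{m-2}V_{m-1}$ then it is a common prefix of the two words, so its length $q_{m-1}$ cannot exceed that of their longest common prefix, giving $w_{m-2}\ge q_{m-1}$. Conversely, if $w_{m-2}\ge q_{m-1}$, then the length-$q_{m-1}$ prefix of $V_{m-1}V_{m-2}$, which is exactly $V_{m-1}$, is a prefix of $W_{m-2}$ and therefore of $V_{m-2}V_{m-1}$. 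This is immediate from Lemma~\ref{commonpref} and is the sole place the word combinatorics is used.

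Next comes the arithmetic. By Lemma~\ref{commonpref} with $k=m-2$ we have $w_{m-2}=\sum_{j=1}^{m-1}(b_j-d_j)q_{j-1}$, and by the identity $\sum_{j=1}^{m-1}b_jq_{j-1}=q_{m-1}+q_{m-2}-2$ recorded just after that lemma, this equals $(q_{m-1}+q_{m-2}-2)-N'$. Hence $w_{m-2}\ge q_{m-1}$ is equivalent to $N'\le q_{m-2}-2$, so $V_{m-1}$ is a prefix of $V_{m-2}V_{m-1}$ exactly when $N'\le q_{m-2}-2$. I would then split on $d_{m-1}$. If $d_{m-1}\ge 1$ then $N'\ge d_{m-1}q_{m-2}\ge q_{m-2}>q_{m-2}-2$, so $V_{m-1}$ is never a prefix (this recovers the implication of Corollary~\ref{prefixVV}). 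If $d_{m-1}=0$, then $N'=\sum_{j=1}^{m-2}d_jq_{j-1}\le q_{m-2}-1$ by Proposition~\ref{greedylazy}(i), with equality precisely when $d_1,\ldots,d_{m-2}$ is the greedy representation of $q_{m-2}-1$, which by Lemma~\ref{altlazyineq}(i) is the alternating one with top digit $b_{m-2}$; appending $d_{m-1}=0$, this says exactly that $d_1,\ldots,d_{m-1}$ is alternating. Thus, when $d_{m-1}=0$, one has $N'\le q_{m-2}-2$ iff $d_1,\ldots,d_{m-1}$ is \emph{not} alternating.

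Assembling the two cases yields the clean positive form: $V_{m-1}$ is a prefix of $V_{m-2}V_{m-1}$ if and only if $d_{m-1}=0$ and the sequence $d_1,\ldots,d_{m-1}$ is not alternating. Negating, $V_{m-1}$ is \emph{not} a prefix of $V_{m-2}V_{m-1}$ if and only if $d_{m-1}\neq 0$ or $d_1,\ldots,d_{m-1}$ is alternating, which is the content of the corollary. The hard part is precisely this sign bookkeeping: the threshold here is $w_{m-2}\ge q_{m-1}$, equivalently $N'\le q_{m-2}-2$, which lies one level above the threshold $q_{m-2}-1$ governed by Lemma~\ref{Ost-C}, so that lemma cannot be invoked verbatim and the split on whether $d_{m-1}$ vanishes is essential; in particular the correct De~Morgan dual of the positive condition retains the disjunction and the branch $d_{m-1}\neq 0$ (always yielding a non-prefix by Corollary~\ref{prefixVV}). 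I would finally recheck the boundary value $N'=q_{m-2}-1$ against the definition of ``alternating'' (two sequences of each length) to make sure the excluded case is matched correctly.
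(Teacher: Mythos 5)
Your argument is mathematically sound, and it takes a genuinely different route from the paper. The paper proves Corollary~\ref{pref1} by an extension trick: append $a_m=1$ to the sequence, observe that $V_{m-2}V_{m-1}=V_m(d_1,\ldots,d_{m-1},1)$ for the extended data, and invoke Corollary~\ref{pref} together with Corollary~\ref{Cword}. You instead inline, one level down, the computation underlying Corollary~\ref{pref}: the prefix property is equivalent to $w_{m-2}\ge q_{m-1}$ by Lemma~\ref{commonpref}, hence to $N'\le q_{m-2}-2$, and you handle by a digit split the threshold that Lemma~\ref{Ost-C} does not cover. Every step checks out: truncating a greedy representation preserves greediness, $d_{m-1}\ge 1$ forces $N'\ge q_{m-2}$, and for $d_{m-1}=0$ the equality $N'=q_{m-2}-1$ holds exactly when $d_1,\ldots,d_{m-2}$ is the alternating sequence with top digit $b_{m-2}$, by Lemma~\ref{altlazyineq}(i), Lemma~\ref{greedylazyineq}(i) and uniqueness of greedy representations.

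However, your closing claim that ``not a prefix iff $d_{m-1}\neq 0$ or $d_1,\ldots,d_{m-1}$ is alternating'' is ``the content of the corollary'' papers over a real discrepancy: the corollary as printed asserts the conjunction ``not a prefix iff $d_{m-1}=0$ \emph{and} alternating,'' and the two statements disagree whenever $d_{m-1}\neq 0$. There \emph{your} version is the correct one: it agrees with Corollary~\ref{prefixVV}, and a direct check confirms it. For instance, with $a_1=a_2=2$ (so $b_1=1$, $b_2=2$) and the greedy digits $(d_1,d_2)=(0,1)$, one gets $V_1=ab$, $V_2=abaab$, and $V_2$ is not a prefix of $V_1V_2=ababaab$, even though $d_2\neq 0$; the printed ``iff'' would assert the contrary. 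The source of the trouble is in the paper's own proof: the extended representation $(d_1,\ldots,d_{m-1},1)$ satisfies $d_m=1=b_m$, so it is greedy only when $d_{m-1}=0$, and Corollary~\ref{pref} is applied outside its greediness hypothesis when $d_{m-1}\neq 0$. Since the paper only ever invokes Corollary~\ref{pref1} with the relevant digit equal to $0$ (Theorem~\ref{bords}, cases (iii), (vi), (vii)), where your equivalence and the printed one coincide, nothing downstream is affected --- but you should say explicitly that you are proving a corrected statement rather than asserting that your disjunctive form matches the printed conjunctive one.
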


\begin{proof}
We apply Corollary \ref{pref} to the sequence $a_1,\ldots, a_{m-1},1$ and the words $V_{m-1}(d_1, \ldots , d_{m-1})$ and
$V'_m=V_m (d_1, \ldots , d_{m-1},1)$, so that $V'_m=V_{m-2}V_{m-1}$: thus the word
$V_{m-1}$ is a prefix of $V_{m-2} V_{m-1}$ if and only if $V'_m$ is not a Christoffel word; but, by Corollary \ref{Cword}, $V'_m$ is a Christoffel word if and only if the sequence $d_{1}, \ldots, d_{m-1},1$ is alternating; this means that $d_{m-1}=0$ and the sequence $d_1,\ldots,d_{m-1}$ is alternating.
\end{proof}

Let us state several result on borders. 

\begin{lemma}\label{YX} Let $i\geq 2$, $Y$ be
a primitive word and $X$ a prefix of $Y$. Then the longest border $B$ of $Y^iX$ is $Y^{i-1}X$. 
\end{lemma}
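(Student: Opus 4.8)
The plan is to prove Lemma \ref{YX} by first establishing that $Y^{i-1}X$ is indeed a border of $Y^iX$, and then showing no longer border exists. That $Y^{i-1}X$ is a border is immediate: since $X$ is a prefix of $Y$, the word $Y^{i-1}X$ is visibly a prefix of $Y^iX$, and it is equally visibly a suffix because $Y^iX = Y \cdot (Y^{i-1}X)$. Since $i\ge 2$ and $Y$ is nonempty (being primitive), this border is nontrivial and proper, so it qualifies.

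The substance is the maximality. Suppose for contradiction that $B'$ is a border of $Y^iX$ strictly longer than $Y^{i-1}X$, so $|Y^{i-1}X| < |B'| < |Y^iX|$. Writing $Y^iX = Y^{i-1}X \cdot Y$ (using $X$ a prefix of $Y$ to rewrite $X Y$ as the appropriate shift), I would compare the prefix occurrence and the suffix occurrence of $B'$. The key tool is the primitivity of $Y$: a word of the form $Y^j$ (or a prefix thereof extending past several copies) cannot overlap a shifted copy of itself nontrivially without forcing $Y$ to be a proper power. Concretely, a border $B'$ longer than $Y^{i-1}X$ would produce, by overlaying the prefix and suffix copies of $B'$ inside $Y^iX$, a nontrivial alignment of $Y$ with a cyclic rotation of itself; primitivity (Fact $(i)$ in the preamble: commuting words are powers of a common word, which is the standard route to the Fine–Wilf/primitivity conclusion) then forces the rotation to be trivial, pinning $|B'|$ to a multiple of $|Y|$ plus $|X|$, and the only such value strictly between $|Y^{i-1}X|$ and $|Y^iX|$ is excluded.

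More carefully, I would argue as follows. A border $B'$ gives two occurrences of $B'$ in $Y^iX$, one as prefix starting at position $0$ and one as suffix ending at position $|Y^iX|$, hence starting at position $|Y^iX|-|B'|=:p$ with $0<p<|Y|$ (the bounds coming from $|Y^{i-1}X|<|B'|<|Y^iX|$ and $|Y^iX|-|Y^{i-1}X|=|Y|$). These two occurrences overlap over a region of length $|B'|-p$, which exceeds $|Y^{i-1}X|-|Y|=|Y^{i-2}X|\ge |X|$, and in fact is long enough to contain a full $Y$-period. The self-overlap then shows $Y^iX$ has period $p$ on a prefix of length at least $|Y|+p$; by the primitivity of $Y$ (no nontrivial self-overlap of $Y$ with a shift by $p$, $0<p<|Y|$, is possible), this is a contradiction. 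I expect the main obstacle to be bookkeeping the exact overlap length to guarantee it is at least $|Y|+p$ so that the periodicity argument applies cleanly; this is where one must use $i\ge 2$ and that $X$ is a genuine prefix of $Y$ rather than an arbitrary word. The cleanest packaging is probably to invoke Fact $(i)$ directly: the overlap forces a relation $Y\cdot W = W\cdot Y$ for a suitable conjugate-related word $W$ of length $p$, whence $Y$ and $W$ are powers of a common word, contradicting primitivity of $Y$ unless $p=0$.
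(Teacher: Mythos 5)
Your proof is correct and is essentially the paper's own argument: the paper likewise assumes a border $B$ longer than $Y^{i-1}X$, writes $Y^iX=U\,Y\,B'$ with $0<|U|<|Y|$, and concludes that $Y$ would be an internal factor of $YY$, contradicting primitivity --- precisely your shift-by-$p$ self-overlap with $p=|U|$, and your derivation of the contradiction from Fact $(i)$ via $WW'=W'W$ is the standard proof of that square-factor fact. One small caveat: your aside that the overlap of the two $B'$-occurrences ``is long enough to contain a full $Y$-period'' can fail (e.g.\ $i=2$, $X$ short), but your actual argument never needs it, since the border gives period $p$ on all of $Y^iX$, whose length is at least $2|Y|>|Y|+p$, so the prefix of length $|Y|+p$ lies inside $YY$ and produces the forbidden internal occurrence.
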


Recall that an {\it internal factor} of a word means a factor that is not a prefix nor a suffix.

\begin{proof} The word $Y^{i-1}X$ is a border of $Y^iX$. Suppose that $B$ is longer. It begins by $Y^{i-1}X$, hence by $Y$ since $i\geq 2$: $B=YB'$. Moreover, $Y^iX=UB$, where the length of $U$ satisfies $0<|U|<|Y|$. Thus $Y^iX=UYB'$ and we see that $Y$ is an internal factor of $YY$, a contradiction.
\end{proof}

Observe that a border of a border of a word $W$ is a border of $W$: the borders of $W$ are 
totally ordered by the relation ``being a border".

\begin{lemma}   \label{bordmax}
Let $W$ be a finite word and $V$ be its longest border. 

(i) The borders of $W$ are precisely $V$ and its borders.

(ii) If $V = U^\ell Z$ with $U$ primitive, $\ell\geq 1$, and $Z$ a proper, possibly empty, prefix of $U$, then 
the borders of $W$ are $V=U^\ell Z, U^{\ell - 1} Z, \ldots , U Z$ and the borders of $U Z$. 
\end{lemma}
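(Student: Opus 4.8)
The plan is to prove Lemma \ref{bordmax} by exploiting the basic observation stated just before it: a border of a border of $W$ is again a border of $W$, so the borders of $W$ form a chain under the ``is-a-border-of'' relation. For part (i), I would argue that every border $B$ of $W$ (other than $V$ itself) must be a border of $V$. Indeed, $V$ is by definition the \emph{longest} border of $W$, so any other border $B$ has $|B|<|V|$; since both $B$ and $V$ are prefixes of $W$ and both are suffixes of $W$, the shorter one $B$ is simultaneously a prefix and a suffix of the longer one $V$, hence a border of $V$. Conversely, by the quoted observation, any border of $V$ is a border of $W$. This gives the exact equality: the borders of $W$ are precisely $V$ together with the borders of $V$.

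For part (ii), the natural approach is induction on the length of $W$ (or on $\ell$), using part (i) to reduce to understanding the borders of $V=U^\ell Z$ itself. First I would handle directly the claim that $U^{\ell-1}Z, U^{\ell-2}Z, \ldots, UZ$ are all borders of $V$: since $U^jZ$ is both a prefix of $U^\ell Z$ (as $Z$ is a prefix of $U$, each $U^jZ$ is a prefix) and a suffix of $U^\ell Z$ (here one uses that $Z$ is a prefix of the primitive $U$, so $U^\ell Z = U^{\ell-j}\cdot U^{j}Z$ exhibits $U^jZ$ as a suffix), each such word is genuinely a border. The heart of the matter is the \emph{converse}: I must show there are no other borders of $V$ strictly between consecutive powers, i.e. that the longest proper border of $U^\ell Z$ is exactly $U^{\ell-1}Z$ when $\ell\geq 2$. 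This is precisely where Lemma \ref{YX} applies, with $Y=U$ and $X=Z$: it tells us the longest border of $U^\ell Z$ is $U^{\ell-1}Z$. Then part (i), applied to $V$ in place of $W$, says the borders of $V$ are $U^{\ell-1}Z$ together with its borders, and an immediate induction peels off the powers of $U$ one at a time down to $UZ$, whose further borders are then the borders of $UZ$.

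The main obstacle I anticipate is making the suffix claim for the powers $U^jZ$ fully rigorous and handling the base case of the induction cleanly. The subtle point is that $Z$ being a \emph{proper} prefix of the \emph{primitive} word $U$ is what prevents spurious borders from arising at lengths incommensurable with $|U|$; without primitivity one could have extra borders, and Lemma \ref{YX} is exactly the tool that rules them out by the internal-factor argument. Concretely, once I have from Lemma \ref{YX} that the longest border of $U^\ell Z$ is $U^{\ell-1}Z$, I can set up the induction: the statement for $V=U^\ell Z$ follows from the (shorter) statement for $U^{\ell-1}Z$, whose longest border is in turn $U^{\ell-2}Z$, and so on, terminating at $UZ$, at which stage Lemma \ref{YX} no longer forces a further power decomposition and we simply append ``the borders of $UZ$.'' Assembling these, part (ii) reads off as the displayed list $V=U^\ell Z, U^{\ell-1}Z, \ldots, UZ$ followed by the borders of $UZ$, completing the proof.
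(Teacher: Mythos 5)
Your proposal is correct and follows essentially the same route as the paper: part (i) by comparing a shorter border with $V$ as prefixes and suffixes of $W$ (plus the observation that a border of a border is a border), and part (ii) by combining (i) with Lemma \ref{YX} applied iteratively to peel off powers of $U$. One cosmetic remark: the suffix claim for $U^jZ$ needs neither primitivity of $U$ nor that $Z$ is a prefix of $U$ --- the factorization $U^\ell Z = U^{\ell-j}\cdot U^jZ$ you wrote already gives it trivially; those hypotheses matter only for the prefix claim and for excluding extra borders via Lemma \ref{YX}.
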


\begin{proof}
(i) Let $X$ be a border of $W$ with $X \not= V$. Then $X$ is a prefix and a suffix of $W$, hence, being shorter than $V$, also a prefix and a suffix of 
$V$. Consequently, $X$
is a border of $V$. The converse follows from the observation before the lemma.

(ii) Follows from (i) and Lemma \ref{YX}.
%Note first that the indicated words are all borders of $W$.  If $\ell=1$, the statement follows from (i). Suppose now that $\ell \geq 2$ and that $B$ is a border of $W$. If the 
%length of $B$ is shorter than that of $UZ$, then $B$ is a border of $UZ$. If on the contrary, $|B|\geq |UZ|$, suppose by contradiction that its length is not congruent to $|Z|$ modulo $|
%U|$. Then $B\neq U^\ell Z$, and by (i) we know that $B=UZT$ is a border of $V=U^\ell Z$, and therefore it is a suffix of this word; we then see that the prefix $U$ of 
%$UZT$ is a proper factor of a factor $UU$ of $U^\ell Z$, a contradiction with the primitivity of $U$. Thus the length of $B$ is congruent to $|Z|$ modulo $|U|$, and since 
%$B=SUZ$ and $B$ is a suffix of $V$, we obtain $B=U^iZ$, $i=0,\ldots,\ell$.
\end{proof}

\noindent {\bf Remark.}
Observe that if $U U$ is a border of $W$ and $V$ a border of $U$, 
then $UV$ is not necessarily a border of $W$. A counterexample is given by $abaababbabaaba$, with $U = aba$
and $V=a$. 

In the following lemmas, we consider the legal representation (\ref{legal}) and put $V_m = V_m (d_1, \ldots , d_m)$, $V_{m-1} = V_{m-1} (d_1, \ldots , d_{m-1})$, 
and $V_{m-2} = V_{m-2} (d_1, \ldots , d_{m-2})$. 

\begin{lemma} \label{overlap} The word $V_m$ is neither an internal factor of $V_m V_{m-1}$, nor of $V_{m-1} V_m$.
\end{lemma}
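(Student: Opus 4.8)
The plan is to show that $V_m$ cannot occur as an internal factor of either $V_m V_{m-1}$ or $V_{m-1} V_m$, by exploiting primitivity of $V_m$ together with the length relation $|V_{m-1}| < |V_m|$ and the factorization tools already established, especially Lemma \ref{commonpref} and fact $(ii)$ at the start of the section.

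First I would treat $V_m V_{m-1}$. Suppose for contradiction that $V_m$ is an internal factor, so $V_m V_{m-1} = U V_m W$ with $U, W$ both nonempty. Since $|V_{m-1}| < |V_m|$, comparing lengths gives $|U| + |W| = |V_{m-1}| < |V_m|$, so in particular $|U| < |V_m|$ and $|W| < |V_m|$. The key idea is to line up the two occurrences of $V_m$: the prefix occurrence in $U V_m W$ (the copy starting at position $|U|$) overlaps with the leading copy of $V_m$ coming from the $V_m$ at the front of $V_m V_{m-1}$. Because $0 < |U| < |V_m|$, this produces a nontrivial overlap of $V_m$ with a shifted copy of itself inside $V_m V_m$ (after extending suitably), which by primitivity of $V_m$ (fact $(ii)$) forces either $U$ or $W$ to be empty, the desired contradiction. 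More concretely, I expect to argue that $U V_m$ is a prefix of $V_m V_{m-1}$, hence of $V_m V_m$ since $|V_{m-1}| < |V_m|$; but fact $(iv)$ says that for $1 \le |U| < |V_m|$ the word $U V_m$ is not a prefix of $V_m V_m$, which is the contradiction. The symmetric statement, that $V_m W$ is a suffix of $V_m V_m$, handles the other side.

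The case $V_{m-1} V_m$ is entirely symmetric: an internal occurrence would give $V_{m-1} V_m = U V_m W$ with $U, W$ nonempty, and now $V_m W$ is a suffix of $V_{m-1} V_m$, hence of $V_m V_m$ since $|V_{m-1}| < |V_m|$, again contradicting fact $(iv)$. The only structural inputs needed are the length inequality $|V_{m-1}| = q_{m-1} < q_m = |V_m|$ (from the strictly increasing lengths noted after \eqref{eq}) and the primitivity package in facts $(ii)$ and $(iv)$, both of which are already recorded at the head of this subsection.

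The main obstacle is purely bookkeeping: making sure the overlap is genuinely \emph{internal} (both $U$ and $W$ strictly nonempty) so that one lands exactly in the regime $1 \le |U| < |V_m|$ where fact $(iv)$ applies, rather than the degenerate cases where the occurrence coincides with the prefix or suffix copy. I would therefore state at the outset that ``internal'' means $U \neq 1$ and $W \neq 1$, derive $0 < |U|, |W| < |V_m|$ from $|V_{m-1}| < |V_m|$, and then invoke fact $(iv)$ directly; no continuant or Ostrowski computation is needed, since the entire argument rests on the length comparison and primitivity.
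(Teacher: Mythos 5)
There is a genuine gap, and it sits exactly where the lemma has content. Your pivotal inference is: ``$UV_m$ is a prefix of $V_mV_{m-1}$, hence of $V_mV_m$ since $|V_{m-1}|<|V_m|$.'' The length inequality does not justify this step: for the prefix of $V_mV_{m-1}$ of length $|U|+q_m>q_m$ to also be a prefix of $V_mV_m$, the first $|U|$ letters of $V_{m-1}$ must agree with the first $|U|$ letters of $V_m$, i.e., essentially $V_{m-1}$ must be a prefix of $V_m$. That holds when $b_m-d_m\geq 1$ (then $V_mV_{m-1}$ really is a prefix of $V_mV_m$ and your fact-$(iv)$ argument goes through), but it fails when $d_m=b_m$, where $V_m=V_{m-2}V_{m-1}^{b_m}$; indeed, by Corollary \ref{pref}, $V_{m-1}$ need not be a prefix of $V_m$ at all. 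A concrete instance: $b_1=1$, $d_1=1$, $d_2=b_2$ gives $V_1=ba$ and $V_2=a(ba)^{b_2}$, which do not even share a first letter, so a prefix of $V_2V_1$ of length $q_2+1$ is not a prefix of $V_2V_2$. Symmetrically, your suffix argument for $V_{m-1}V_m$ tacitly needs the last $|W|$ letters of $V_{m-1}$ to agree with those of $V_m$, i.e., $V_{m-1}$ a suffix of $V_m$, which fails when $d_m=0$ (e.g., $V_1=a^{b_1}b$ ends in $b$ while $V_2=(a^{b_1}b)^{b_2}a$ ends in $a$). Since the lemma is stated for an arbitrary legal representation, the values $d_m=0$ and $d_m=b_m$ cannot be excluded; and Lemma \ref{overlap2} shows these are exactly the digit values at which extra occurrences do appear one level down, so these boundary cases are precisely where the statement is delicate.

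The paper's proof is built to handle them. From an internal occurrence $V_mV_{m-1}=XV_mY$ it extracts $V_m=XW=WZ$ with $|X|=|Z|$, where $X$ is a prefix of $V_m$ and $Z$ a prefix of $V_{m-1}$; primitivity of $V_m$ (a primitive word differs from all its nontrivial conjugates) gives $X\neq Z$, whence $V_{m-1}$ is \emph{not} a prefix of $V_m$, which forces $d_m=b_m$ and $V_m=V_{m-2}V_{m-1}^{b_m}$. The contradiction is then drawn from the primitivity of $V_{m-1}$, not of $V_m$: one shows $V_{m-1}$ would be an internal factor of $V_{m-1}V_{m-1}$. Your shortcut via fact $(iv)$ invokes only the primitivity of $V_m$ and cannot reach this case; to repair your argument you would need a separate treatment of $d_m=b_m$ (for $V_mV_{m-1}$) and $d_m=0$ (for $V_{m-1}V_m$), and that treatment is essentially the paper's proof.
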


\begin{proof} We may assume that $m\geq 2$. Suppose that $V_m$ is an internal factor of $V_m V_{m-1}$.     %%y [internal] 
Then $V_m V_{m-1} = X V_m Y$, with $X$ and $Y$ nonempty.
Since $V_{m-1}$ is shorter than $V_m$, there exist a suffix
$W$ of $V_m$ and a prefix $Z$ of $V_{m-1}$ 
such that $V_m = X W = W Z$. Note that $|X|=|Z|$. Since $V_m$ is primitive, it is not equal to one of its conjugates, thus
the words $X$ and $Z$ are different; moreover, $X$ is a prefix of $V_m$ and $Z$ is a prefix of $V_{m-1}$; since they have the same positive length and are different, $V_{m-1}$ is not a prefix of $V_m$. 
Consequently, we have $V_m = V_{m-2} V_{m-1}^{b_m}$, $V_{m-2}V_{m-1}^{b_m+1}=V_mV_{m-1}=XV_mY=XV_{m-2} V_{m-1}^{b_m}Y$. Since $b_m\geq 1$ 
and $Y$ is shorter than $V_{m-1}$ (because $V_m V_{m-1} = X V_m Y$, hence $|X|+|Y|=|V_{m-1}|$ and $X$ nonempty), $V_{m-1} Y$ is a suffix of $V_{m-1} V_{m-1}$. Since $Y$ is nonempty, $V_{m-1}$ is an internal factor of $V_{m-1}V_{m-1}$. 
This contradicts the primitivity of $V_{m-1}$.

The proof for $V_{m-1}V_m$ is similar and we omit it. 
\end{proof}

\begin{lemma} \label{overlap2} Assume that $m\geq 1$.
Let $u, v$ be 
positive integers and set $V = V_{m}^u V_{m-1} V_{m}^v$. 
There is no other occurrence of $V_{m}$ in $V$, except possibly, one starting by $V_{m-1}$ (case L)
and one ending by $V_{m-1}$ (case R). Case L occurs if and only $V_m$ is a prefix of $V_{m-1}V_m$, and then $d_m=0$.
Case R occurs if and only if $V_m$ is a suffix of $V_mV_{m-1}$, and then $d_m=b_m$.
\end{lemma}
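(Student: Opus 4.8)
The plan is to analyze occurrences of $V_m$ inside $V = V_m^u V_{m-1} V_m^v$ by locating where an occurrence can possibly begin, using the primitivity facts $(i)$--$(v)$ recorded before Lemma \ref{commonpref}. First I would observe that the $u+v$ occurrences of $V_m$ that are visible in the factorization $V_m^u V_{m-1} V_m^v$ are honestly present; the content of the lemma is that there are at most two \emph{additional} occurrences, the ones described as case L and case R. Because $V_{m-1}$ is strictly shorter than $V_m$, any extra occurrence of $V_m$ must straddle the central block $V_{m-1}$: it cannot lie entirely inside one of the powers $V_m^u$ or $V_m^v$, since by fact $(ii)$ (primitivity of $V_m$) the word $V_m V_m$ contains exactly two occurrences of $V_m$, namely the two obvious ones. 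So every occurrence of $V_m$ must meet the central $V_{m-1}$, and by fact $(iv)$ an occurrence of $V_m$ cannot overlap a power $V_m^k$ in a nontrivial shifted way. This forces any extra occurrence to be a factor of $V_m V_{m-1} V_m$ that genuinely crosses $V_{m-1}$.

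Next I would pin down the two possible positions of such a straddling occurrence. An occurrence of $V_m$ that starts somewhere inside the left block $V_m^u$ and extends past it would, by fact $(iv)$, have to start exactly at a boundary between copies of $V_m$, i.e. it starts at the left edge of $V_{m-1}$; this is case L, where $V_m$ is a prefix of $V_{m-1} V_m^v$, hence (since $|V_m| > |V_{m-1}|$ and by the stability and length bounds) a prefix of $V_{m-1} V_m$. Symmetrically, an occurrence ending inside the right block $V_m^v$ must end exactly at the right edge of $V_{m-1}$; this is case R, where $V_m$ is a suffix of $V_m^u V_{m-1}$, hence a suffix of $V_m V_{m-1}$. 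I would then argue there can be no occurrence of $V_m$ that both starts strictly inside $V_{m-1}$ and ends strictly inside $V_{m-1}$ is impossible for length reasons, and that an occurrence starting inside $V_{m-1}$ but otherwise positioned would again violate $(iv)$ against the adjacent $V_m$ block; Lemma \ref{overlap} (that $V_m$ is not an internal factor of $V_m V_{m-1}$ nor of $V_{m-1} V_m$) is exactly what rules out the remaining degenerate straddling positions.

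Finally I would identify the digit conditions. For case L, $V_m$ is a prefix of $V_{m-1} V_m$; applying Corollary \ref{prefixVV} with $k = m-1$ (reading $V_{k+1} = V_m$ as a prefix of $V_k V_{k+1} = V_{m-1} V_m$) yields $d_m = 0$. For case R, $V_m$ is a suffix of $V_m V_{m-1}$; the suffix half of Corollary \ref{prefixVV} gives $d_m = b_m$. I would also note the biconditional: case L occurs precisely when $V_m$ is a prefix of $V_{m-1} V_m$, since if it is a prefix then the overlap of $V_{m-1}$ with the start of the right $V_m$ produces exactly such an occurrence, and conversely any extra left-straddling occurrence forces $V_m$ to be this prefix. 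The main obstacle I anticipate is the careful bookkeeping needed to show rigorously that \emph{no} occurrence of $V_m$ can begin strictly between two internal positions — that is, converting the intuitive ``it must align to a $V_m$-boundary'' into a clean argument via fact $(iv)$ and Lemma \ref{overlap}, handling the boundary cases $u=v=1$ and making sure case L and case R are genuinely the only extra possibilities rather than, say, several shifted copies. Primitivity of $V_m$ and the uniqueness of occurrences in $V_m V_m$ are the tools that close this off.
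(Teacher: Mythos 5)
Your overall architecture matches the paper's in its outer layers: the extra occurrences must meet the central $V_{m-1}$; the boundary-aligned ones are exactly cases L and R; and the digit conditions $d_m=0$ and $d_m=b_m$ follow from Corollary \ref{prefixVV} applied at $k=m-1$, which is precisely the paper's part B). But there is a genuine gap in the middle, and you half-sense it yourself in your closing paragraph. The dangerous configuration is an occurrence of $V_m$ that starts \emph{strictly inside} the last left copy of $V_m$ and ends \emph{strictly inside} the first right copy, i.e.\ an identity $V_m = X V_{m-1} Y$ with $X,Y$ nonempty, $X$ a suffix of $V_m$ and $Y$ a prefix of $V_m$. None of your cited tools excludes this: fact $(iv)$ only constrains occurrences overlapping two consecutive copies of $V_m$, and a straddling occurrence does not sit inside $V_m V_m$; Lemma \ref{overlap} only forbids $V_m$ as an internal factor of $V_m V_{m-1}$ or of $V_{m-1} V_m$, and an occurrence strictly containing the whole central $V_{m-1}$ lies in neither of those two words --- it lies only in $V_m V_{m-1} V_m$, about which Lemma \ref{overlap} says nothing. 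So your claim that an occurrence extending past the left block ``would, by fact $(iv)$, have to start exactly at a boundary'' is unjustified; the alignment to a $V_m$-boundary is not bookkeeping but the actual mathematical content of the lemma.

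The paper's proof spends most of its length (its part A) on exactly this configuration, and it cannot dispose of it by generic primitivity: it opens up $V_m$ via the recursion $V_m = V_{m-1}^{b_m-d_m} V_{m-2} V_{m-1}^{d_m}$ and splits into the cases $1\le d_m\le b_m-1$, $d_m=0$, and $d_m=b_m$. In the middle case it shows that $X$ or $Y$ must be an integral power of $V_{m-1}$ (using primitivity of $V_{m-1}$), and then in either subcase deduces that $V_{m-2}V_{m-1}$ is a prefix of $V_{m-1}^2$, contradicting primitivity of $V_{m-1}$; in the case $d_m=0$ it reduces, via Lemma \ref{overlap} applied one level down and a congruence analysis of $|Y|$ modulo $|V_{m-1}|$, to $r=1$ and $V_{m-2}$ a prefix of $V_{m-1}$, forcing the commutation $V_{m-1}V_{m-2}=V_{m-2}V_{m-1}$, which is excluded by fact $(v)$. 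Your proposal, as written, would let through precisely the hypothetical occurrences the lemma is designed to exclude; to repair it you need this level-$(m-1)$ structural analysis (or an equivalent argument exploiting the internal factorization of $V_m$), not merely a tightening of the boundary bookkeeping.
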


\begin{proof}
\noindent A) Consider an occurrence of $V_m$ in $V$. By the primitivity of $V_m$ and 
Lemma \ref{overlap}, suppose by contradiction that there exist nonempty words $X, Y$ such that 
$V_{m} = X V_{m-1} Y$, where $V_{m-1}$ is the factor appearing in the indicated factorization of $V$, $X$ is a suffix of $V_m$ and $Y$ a prefix of $V_m$. %Suppose by contradiction that $X$ and $Y$ are both nonempty.

1. Assume first that $1 \le d_{m} \le b_{m} - 1$. Thus by (\ref{eqm}), the word 
$V_{m-1}$ is a prefix and a suffix of $V_m$. We show that
either $X$ is an integer power of $V_{m-1}$, or $Y$ is an integer power of $V_{m-1}$. 
Indeed, if $|X| < |V_{m-1}|$, then $X$ is a 
nontrivial proper suffix of $V_{m-1}$, $V_{m-1}=UX$, where $U$ is nonempty, and $V_m$ begins with $X V_{m-1}$; but $V_{m}$ also 
begins with $V_{m-1}$, $V_m=V_{m-1}W$, hence $UV_{m-1}W=UV_m=UXV_{m-1}Y=V_{m-1}V_{m-1}Y$; since $U$ is nonempty and shorter than $V_{m-1}$, we see that $V_{m-1}$ is a proper factor of $V_{m-1}^2$, and we have a contradiction with the primitivity of $V_{m-1}$. 

Consequently, $|X|\geq V_{m-1}$, and therefore $X=X'V_{m-1}$ (since $X$ and $V_{m-1}$ are both suffixes of $V_m$). A symmetric 
argument shows that $Y=V_{m-1}Y'$. Thus, $V_{m} = X' V_{m-1}^3 Y'$. Since $V_m = V_{m-1}^{b_m - d_m} V_{m-2} V_{m-1}^{d_m}$, $|V_{m-2}|\leq |V_{m-1}|$, and $V_{m-2}\neq V_{m-1}$, we see that $V_{m-1}$ is an internal factor of $V_{m-1}^2$, a contradiction 
with the primitivity of $V_{m-1}$. Hence, $X'$ or $Y'$ is an integer power of $V_{m-1}$. 

Assume that $X = V_{m-1}^z$, for some positive integer $z$, the other case being similar. We have two cases, depending on the relative values of $z$ and $b_m-d_m$. In
both cases, we claim that $V_{m-2}V_{m-1}$ is a prefix of $V_{m-1}^{2}$, a contradiction 
with the primitivity of $V_{m-1}$, since $V_{m-2}$ is not longer than $V_{m-1}$ and $V_{m-1}\neq V_{m-2}$. For the claim, we have indeed $V_m=XV_{m-1}Y=V_{m-1}^{z+2}Y'$ and $V_m=V_{m-1}^{b_m-d_m}V_{m-2}
V_{m-1}^{d_m}$.
If $b_m-d_m\leq z$, then $z=b_m-d_m+h$, $h\geq 0$, thus $V_{m-1}^{h+2}Y'=V_{m-2}V_{m-1}^{d_m}$, which proves the claim in this case, since $d_m\geq 1$. If $b_m-d_m> z$, then $b_m-
d_m=z+h+1$, $h\geq 0$, and $V_{m-1}^2Y'=V_{m-1}^{h+1}V_{m-2}V_{m-1}^{d_m}$, thus $Y=V_{m-1}Y'=V_{m-1}^hV_{m-2}V_{m-1}^{d_m}$; now $Y$ is a prefix of $V_m$, 
$V_m=YW$, hence $V_{m-1}^hV_{m-2}V_{m-1}^{d_m}W=V_m=V_{m-1}^{z+h+1}V_{m-2}V_{m-1}^{d_m}$, thus $V_{m-2}V_{m-1}^{d_m}W=V_{m-1}^{z+1}V_{m-2}V_{m-1}
^{d_m}$, which proves the claim, since $z,d_m\geq 1$.

2. Assume now that $d_m = 0$, hence $V_m = V_{m-1}^{b_m} V_{m-2}$. Since $V_m=XV_{m-1}Y$, we see that: either $Y$ is shorter than $V_{m-2}$ and then 
$V_{m-1}$ is an internal factor of $V_{m-1}V_{m-2}$, contradicting Lemma \ref{overlap}; or the length of $Y$ is larger than that of $V_{m-2}$, and noncongruent to it 
modulo $|V_{m-1}|$, and then $V_{m-1}$ is an internal factor of $V_{m-1}^2$, contradicting the primitivity of $V_{m-1}$; or the length of $Y$ is congruent to $|
V_{m-2}|$ modulo $|V_{m-1}|$, and then $X$ is an integral power of $V_{m-1}$.

Precisely, there are integers $r, s$ such that $r+s+1=b_{m}$,  $X = V_{m-1}^r$ and $Y = V_{m-1}^s V_{m-2}$. 
If $r \ge 2$, then $V_{m-1} V_{m-1}$ and $V_{m-1} V_{m-2}$ 
are suffixes of $V_m$, a contradiction with the 
primitivity of $V_{m-1}$. Thus, we have $r=1$ and $V_{m-2}$ 
is a prefix of $V_{m-1}$ (since $Y$, of length at most equal to $(s+1)|V_{m-1}|$, is a prefix of $V_m=V_{m-1}^{r+s+1}V_{m-2}$, hence of $V_{m-1}^{r+1+s}$). 
Observe that $X = V_{m-1}$ and $V_{m-1} V_{m-2}$ are suffixes 
of $V_m$. Since $V_{m-2}$ 
is a prefix of $V_{m-1}$, we get $V_{m-1} V_{m-2} = V_{m-2} V_{m-1}$, a contradiction.

3. The case $d_m = b_m$ is similar to the case $d_m = 0$ and we omit it.

\noindent B) Suppose now that there is an occurrence of $V_m$ starting at $V_{m-1}$. This means that $V_{m}$ is a prefix of $V_{m-1}V_m$. Then $d_m=0$ by Corollary 
\ref{prefixVV}.
%
%We have $V_m=V_{m-1}^{b_m-d_m}
%V_{m-2}V_{m-1}^{d_m}$. If $d_m>0$, then $V_{m-1}$ is a suffix of $V_m$, hence of $V_{m-1}V_m$. Hence $V_{m-1}V_m=V_mV_{m-1}$, a contradiction. Thus $d_m=0$.

Suppose now that there is an occurrence of $V_m$ ending at $V_{m-1}$; this is equivalent to the fact that $V_{m}$ is a suffix of $V_mV_{m-1}$. Then, similarly, we must have $d_m=b_m$.
\end{proof}

{\begin{lemma}\label{YXY} Let $i,j$ be positive integers, and $X,Y$ be nonempty words such that $X$ is shorter than $Y$, $Y$ is primitive, 
and $XY\neq YX$. Suppose further that in the word $W=Y^iXY^j$ 
there are at most
$i+j+2$ occurrences of the factor $Y$, namely the $i+j$ ones coming from the indicated factorization of $W$, and at most two others, 
beginning or ending by the $X$ indicated in the factorization (we denote these two cases respectively by L and R). 
Let $\ell=\min\{i,j \}$. Then the longest border $B$ of $W$ is $Y^{\ell +1}$ if either $i<j$ and case L occurs, or $i>j$ and case R occurs. In 
all other cases, $B=Y^\ell$.
\end{lemma}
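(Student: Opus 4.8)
The plan is to analyze the word $W = Y^i X Y^j$ by locating all of its borders, which correspond to ways of overlapping the prefix of $W$ with its suffix. The key observation is that any border $B$ of $W$ produces an occurrence of the factor $Y$ (if $B$ is long enough to contain a full copy of $Y$) read from the right end of $W$, and this occurrence must be one of the at most $i+j+2$ occurrences catalogued in the hypothesis. First I would establish the ``obvious'' borders: since $\ell = \min\{i,j\}$, the word $Y^\ell$ is simultaneously a prefix and a suffix of $W$ (it sits inside both the leading $Y^i$ and the trailing $Y^j$), so $Y^\ell$ is always a border provided $\ell \ge 1$; when $\ell = 0$ one must check separately that the claimed border is correct.

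Next I would prove that $Y^\ell$ is the longest border except in the two exceptional configurations. Suppose $B$ is a border strictly longer than $Y^\ell$. Being a suffix of $W$, $B$ ends at the right edge of $W$; being a prefix, $B = Y^{i} \cdots$ begins with $Y^{\min(i,|B|/|Y|)}$ copies of $Y$. Because $|B| > \ell|Y| = \min\{i,j\}|Y|$, the suffix $B$ must reach back past the final block $Y^j$ and hence across the central $X$. The crucial step is then to use the occurrence-counting hypothesis: the copy of $Y$ that $B$ forces (as a prefix of $B$, hence as a factor of $W$ located at the appropriate position from the right) must coincide with one of the allowed occurrences. Since $XY \neq YX$ and $Y$ is primitive, $Y$ cannot straddle the boundary between $X$ and a power of $Y$ in more than the two privileged ways L and R; this is exactly where $X$ shorter than $Y$ is used, to control how $X$ can be absorbed into an overlap.

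I would then split into the two asymmetric cases. If $i < j$, a border longer than $Y^\ell = Y^i$ must extend its prefix part beyond $Y^i$ into $X$, and matching this against the suffix forces an extra occurrence of $Y$ beginning at the distinguished $X$, i.e. case L; conversely if case L occurs then $Y^{i+1} = Y^{\ell+1}$ is genuinely a border, and Lemma \ref{bordmax} together with primitivity shows no longer border of the form $Y^{\ell+1}X'$ can appear without contradicting $XY \neq YX$. Symmetrically, if $i > j$ the relevant extension forces case R and yields $B = Y^{\ell+1}$. When $i = j$, both directions would require \emph{both} an L-type and an R-type extra occurrence to gain anything beyond $Y^\ell$, and I would argue this cannot consistently happen (or does not lengthen the border), so $B = Y^\ell$.

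The main obstacle I anticipate is the bookkeeping in the borderline overlaps: carefully showing that a hypothetical border of length strictly between $\ell|Y|$ and $(\ell+1)|Y|$, or strictly greater than $(\ell+1)|Y|$, is impossible. The first is ruled out by primitivity of $Y$ (a border whose length is not a multiple of $|Y|$ would place $Y$ as an internal factor of $Y^2$, exactly the contradiction used repeatedly in Lemmas \ref{overlap} and \ref{overlap2}); the second requires invoking the occurrence bound to see that two additional copies of $Y$ would be needed, which the hypothesis forbids. Getting the inequalities and the role of $|X| < |Y|$ exactly right in these overlap arguments is the delicate part; the existence of the claimed borders is by contrast straightforward.
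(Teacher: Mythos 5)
Your overall strategy---catalogue the occurrences of $Y$ in $W$, use primitivity of $Y$ (no internal occurrence in $YY$) together with $|X|<|Y|$ to align everything, then split on cases L and R---is the same as the paper's, but there are two genuine gaps. First, the pivotal claim in your second paragraph is false as stated: when $i<j$, a border $B$ with $\ell|Y| < |B| \le j|Y|$ (for instance the actual answer $Y^{i+1}$) sits, as a suffix, entirely inside the final block $Y^j$ and does \emph{not} reach across the central $X$; the crossing happens on the \emph{prefix} side there. More seriously, you never prove that cases L and R are mutually exclusive, and your hedge ``this cannot consistently happen (or does not lengthen the border)'' for $i=j$ conceals that the second alternative is simply wrong: if both L and R occurred with $i=j$, then $Y^{i+1}$ \emph{would} be a border (a prefix by L, a suffix by R), contradicting the conclusion $B=Y^\ell$. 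The paper proves exclusivity in one line (its part 2): if both occur, the occurrence of $Y$ beginning at $X$ is an internal factor of the square $YY$ formed by the occurrence of $Y$ ending at $X$ and the first block of $Y^j$, using precisely $0<|X|<|Y|$. This step is indispensable and cannot be waved away.

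Second, your mechanism for excluding long borders is off. You say the occurrence bound forbids ``two additional copies of $Y$,'' but the hypothesis explicitly \emph{allows} two extra occurrences (one of each type); what kills a long border is not the count but the positions, together with a computation you do not supply. The paper's part 1 rules out borders of the form $Y^iXY^r$ (and, symmetrically, $Y^rXY^j$): writing $W=UB$ yields $Y^iXY^{j-r}=UY^iX$, and then $j-r=1$ forces $XY=YX$---this is the only place the hypothesis $XY\neq YX$ is actually used, not where you invoke it---while $j-r\geq 2$ makes $Y$ an internal factor of $YY$ since $|X|<|Y|$. Combined with the occurrence catalogue, this pins any border longer than $Y^\ell$ down to exactly $Y^{\ell+1}$ and forces case L (resp.\ R) with $i<j$ (resp.\ $i>j$). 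Your appeal to Lemma \ref{bordmax} here is also misplaced: that lemma enumerates the borders of a word \emph{given} its longest border, so it cannot be used to bound the longest border. With part 1 and the L/R exclusivity supplied, your outline closes along the paper's lines; without them it does not.
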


Note that the cases L and R match with those of Lemma \ref{overlap2}.

\begin{proof} 1. Suppose by contradiction that $B=Y^iXY^r$, with $0\leq r\leq j$. Then $r<j$ since $B\neq W$. Moreover
$Y^iXY^{j-r}Y^r=W=UB=UY^{i-1} YXY^r$, hence $Y^iXY^{j-r}=UY^{i-1}YX$; if $j-r=1$, then $XY=YX$, a contradiction; thus $j-r \geq 2$ and, since $X$ is shorter 
than $Y$, we see that $Y$ is an internal factor of $YY$,
which contradicts the primitivity of $Y$.

We deduce that $B\neq Y^iXY^r$, when $0\leq r\leq j$, and by symmetry, $B\neq Y^rXY^j$, when $0\leq r\leq i$. 

2. We show that cases L and R cannot occur simultaneously. Indeed, if they occur together then, since $X$ is nonempty and shorter than $Y$, 
the factor $Y$ beginning at $X$ is an internal factor of $YY$, product of the factor $Y$ ending at $X$ and of the first factor $Y$ of $Y^j$; 
this contradicts the primitivity of $Y$. 

3. By symmetry, we may assume that 
$i\leq j$. Then $\ell=i$. 
Clearly, $Y^i$ is a border. 

Suppose that $B$ is longer; then $B$ extends $Y^i$ to the left, hence $B$ ends by $Y$, since $i\geq 1$;
moreover, we may extend the prefix $Y^i$ 
of $W$ to 
the longer prefix $B$, and since $B$ ends by $Y$, by 1. this $Y$ is the factor $Y$ of $W$ starting at $X$, and we are in case L; thus, since $B$ ends by $Y$, by 1. and by the hypothesis on the locations of the factors $Y$ in $W$, 
$B=Y^{i+1}$. But $B$ is also a right factor of $W$, hence we must have $j>i$, otherwise there is a factor $Y$ ending at $X$, which is excluded 
by 2. 
\end{proof}

\begin{lemma}\label{XY} Let $j\geq 1$, $Y$ be a primitive word, and $X$ a nonempty word, shorter that $Y$, such that $Y$ is a prefix of $XY^j$, and that $Y$ is not an internal factor of $XY$. Then the longest border $B$ of $XY^j$ is $Y$.
\end{lemma}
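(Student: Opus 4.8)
The plan is to prove Lemma \ref{XY} by a direct contradiction argument, showing that any border strictly longer than $Y$ would force $Y$ to appear as an internal factor of $XY$, contradicting the hypothesis. First I would note that $Y$ is indeed a border of $XY^j$: it is a suffix because $j\geq 1$, and it is a prefix by assumption. So $B$ exists and $|B|\geq|Y|$. The goal is to show $B=Y$, i.e. that $B$ cannot be longer.

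Suppose for contradiction that $B$ is a border with $|B|>|Y|$. Since $B$ is a prefix of $XY^j$ and $|B|>|Y|\geq |X|$ (as $X$ is shorter than $Y$), $B$ strictly extends the prefix $X$, so I can write $B=XB'$ for some nonempty $B'$ which is a prefix of $Y^j$. On the other hand, $B$ is a suffix of $XY^j$, and since $|B|>|Y|$ while $|X|<|Y|$, the length of $B$ lies strictly between $|Y|$ and $|XY^j|$; thus $B$ ends with a full copy of $Y$ and in fact $B$ is a suffix of $Y^j$ of length $>|Y|$. Writing $XY^j=UB$ with $U$ a nonempty proper prefix of $X$ (so $0<|U|<|X|<|Y|$), I get $XY^j = U\,B$ where $B$ has $Y$ as its suffix. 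The key is then to locate an occurrence of $Y$ inside $XY$ that is neither a prefix nor a suffix.

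The main combinatorial step: from $B=XB'$ being a suffix of $Y^j$ together with $|U|<|Y|$, I read off an occurrence of $Y$ that starts inside the prefix $U X$ but after position $0$. More concretely, since $U$ is a nonempty prefix of $B$ of length $<|Y|$ and $B$ begins with a copy of $Y$ (because $B$, being a long suffix of $Y^j$, is left-extendable to a power of $Y$ only if $Y$ is primitive; I would argue via primitivity that a suffix of $Y^j$ longer than $Y$ must itself begin with $Y$), the word $W=UB=UXB'$ contains a copy of $Y$ starting at position $|U|>0$. Restricting attention to the prefix of $XY^j$ of length $|X|+|Y|=|XY|$, this misplaced copy of $Y$ lies entirely within $XY$, begins at a position $>0$ (since $|U|\geq 1$), and ends before the end of $XY$ (using $|U|<|Y|$ so the shift is strictly between $0$ and $|Y|$). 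Hence $Y$ is an internal factor of $XY$, contradicting the hypothesis. The argument is symmetric to the reasoning already used in Lemmas \ref{overlap} and \ref{YXY}, where a displaced copy of a primitive word is shown to be internal.

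The hard part will be bookkeeping the exact positions so that the displaced occurrence of $Y$ is genuinely internal to $XY$ (strictly after the start, strictly before the end), rather than merely an internal factor of some longer power $Y^2$; the hypotheses ``$Y$ is a prefix of $XY^j$'' and ``$Y$ is not an internal factor of $XY$'' are precisely tailored so that the contradiction lands inside $XY$ and not inside $Y^2$. I expect to invoke the primitivity of $Y$ (fact $(i)$ and the standard observation that $YY$ has no internal occurrence of $Y$) to pin down that the displaced copy cannot be absorbed, and to handle carefully the boundary case where the extra border has length exactly a multiple of $|Y|$ plus $|X|$. Once the positions are set up correctly, the contradiction is immediate and the lemma follows.
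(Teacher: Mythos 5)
There is a genuine gap, and it sits precisely in the bookkeeping you defer. Your configuration is internally inconsistent: you assert both that $B$ is a suffix of $Y^j$ and that $XY^j=UB$ with $U$ a nonempty proper prefix of $X$. The first forces $|U|=|XY^j|-|B|\geq |X|$, the second forces $|U|<|X|$; these describe two mutually exclusive cases, namely $|B|\leq j|Y|$ versus $|B|>j|Y|$ (and for $j=1$ the first case is empty, since any border longer than $Y$ is longer than $Y^j$). Your displaced-occurrence argument is sound exactly in the second case: there $B=X_2Y^j$ with $X_2$ a nonempty proper suffix of $X$, so the copy of $Y$ at the head of $B$ starts at position $|U|\in(0,|X|)$ and ends before position $|X|+|Y|$, whence $Y$ is internal in $XY$, as you want. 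But in the first case your auxiliary claim that a suffix of $Y^j$ longer than $Y$ must begin with $Y$ is false (take $Y=ab$ and the suffix $bab$ of $Y^2$); what is true --- and what you actually need, for the trivial reason that $B$ and $Y$ are both prefixes of $XY^j$ with $|B|>|Y|$ --- is that $B$ begins with $Y$, and then primitivity forces a suffix of $Y^j$ beginning with $Y$ to have length a multiple of $|Y|$, i.e.\ $B=Y^h$ with $h\geq 2$.

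That aligned case $B=Y^h$, $h\geq 2$, is the one your argument does not touch: the suffix occurrence of $B$ is then block-aligned (its leading $Y$ starts at $|X|+(j-h)|Y|$, exactly a block boundary), so there is no displaced copy on the suffix side and no internal occurrence of $Y$ in $XY$ to be extracted --- your claimed contradiction with the hypothesis on $XY$ simply does not materialize here. The contradiction must instead come from the prefix side: $YY$ is a prefix of $XY^j$, and since $0<|X|<|Y|$, the second $Y$ of this prefix lies inside the first two $Y$-blocks of $XY^j$ with offset $|Y|-|X|$ strictly between $0$ and $|Y|$, so $Y$ is an internal factor of $YY$, contradicting primitivity rather than the $XY$ hypothesis. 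This is exactly how the paper's proof is organized: it writes $B=YUZ$ with $|U|$ a multiple of $|Y|$ and $|Z|<|Y|$, rules out nonempty $Z$ by the dichotomy ``$Y$ internal in $XY$'' or ``$Y$ internal in $YY$'', and then eliminates $B=Y^h$, $h\geq 2$, by the prefix argument just described. Until you separate the cases $|B|>j|Y|$, $|B|\leq j|Y|$ with $|B|$ not a multiple of $|Y|$, and $B=Y^h$, and supply the $YY$-primitivity step for the last one, the proof is incomplete.
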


\begin{proof} We may write $B=YUZ$, 
where the length of $U$ is a multiple of that of $Y$, and $|Z|<|Y|$;
assume by contradiction that $Z$ is nonempty.

Then, since $B$ is a suffix of $XY^j$, we have $XY^j=W{\widetilde Y}UZ$; then we see that that either $\widetilde Y$ is an internal factor of $XY$ (a contradiction with the hypothesis), or $\widetilde Y$ is an internal factor of $YY$ (which contradicts the fact that $Y$ is primitive). Thus $Z$ must be empty.

It follows that $B=YU$, hence $B=Y^h$, since $B$ is a suffix of $XY^j$. If we have $h\geq 2$, then $j\geq 2$, and since $B$ is a prefix of $XY^j$, and $X$ is 
nonempty and shorter that $Y$, we see 
that $Y$ is an internal factor of $YY$, a contradiction again. Thus $h=1$.
\end{proof}

\begin{lemma}\label{dm0} If $m\geq 2$, $d_m=0$ and $V_{m-1}$ is a suffix of $V_{m}$, then $d_{m-1}=b_{m-1}$.
\end{lemma}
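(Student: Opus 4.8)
The plan is to reduce the statement to Corollary~\ref{prefixVV}, which already asserts that if $V_{k+1}$ is a suffix of $V_{k+1}V_k$ then $d_{k+1}=b_{k+1}$; taking $k=m-2$ this is precisely the conclusion we want, provided we can transfer the suffix occurrence from $V_m$ down to $V_{m-1}V_{m-2}$.

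First I would unfold the recursion \eqref{eq}: since $d_m=0$, it gives $V_m=V_{m-1}^{b_m}V_{m-2}$. As $m\ge 2$ we have $b_m=a_m\ge 1$, so we may rewrite $V_m=V_{m-1}^{\,b_m-1}(V_{m-1}V_{m-2})$ with $b_m-1\ge 0$; in particular $V_{m-1}V_{m-2}$ is a suffix of $V_m$.

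The key step is the descent. By hypothesis $V_{m-1}$ is also a suffix of $V_m$, and $|V_{m-1}|=q_{m-1}\le q_{m-1}+q_{m-2}=|V_{m-1}V_{m-2}|$. Two words that are both suffixes of a common word are comparable for the suffix order, the shorter one being a suffix of the longer; therefore $V_{m-1}$ is a suffix of $V_{m-1}V_{m-2}$. Applying Corollary~\ref{prefixVV} with $k=m-2$ (legitimate since $m\ge 2$ forces $k\ge 0$) then yields $d_{m-1}=b_{m-1}$, as desired.

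I expect the only delicate point to be this descent, together with the remark that $b_m\ge 1$, which is exactly what guarantees that $V_{m-1}V_{m-2}$ genuinely occurs as a suffix of $V_m$ (rather than being truncated by the power $V_{m-1}^{b_m}$). Once that observation is in place, the statement is a direct consequence of the already-established corollary, with no further combinatorial computation required.
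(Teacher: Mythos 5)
Your proof is correct, and it takes a genuinely different (and shorter) route than the paper's. The reduction is sound: since $d_m=0$ gives $V_m=V_{m-1}^{b_m}V_{m-2}$ and $b_m=a_m\ge 1$ for $m\ge 2$, the word $V_{m-1}V_{m-2}$ is indeed a suffix of $V_m$; comparability of two suffixes of a common word then makes $V_{m-1}$ a suffix of $V_{m-1}V_{m-2}$, and Corollary \ref{prefixVV} applied with $k=m-2\ge 0$ (legitimate, since that corollary precedes the lemma and depends only on Lemma \ref{commonpref}, so there is no circularity) yields $d_{m-1}=b_{m-1}$. The paper instead argues by contradiction from primitivity: it disposes of $m=2$ by direct inspection of $V_2=V_1^{b_2}a$, then for $m\ge 3$ takes the maximal $h$ with $V_{m-2}^h$ a suffix of $V_{m-1}$, writes $V_{m-1}=UV_{m-2}^h$ with $U$ shorter than $V_{m-2}$, excludes $b_{m-1}-d_{m-1}\ge 2$ because $UV_{m-2}$ would then be a prefix of $V_{m-2}V_{m-2}$, and excludes $b_{m-1}-d_{m-1}=1$ by deriving the forbidden commutation $V_{m-3}V_{m-2}=V_{m-2}V_{m-3}$. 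Your route buys brevity and uniformity (no special case at $m=2$, since Corollary \ref{prefixVV} already covers $k=0$), and it makes transparent that the lemma is really a consequence of the exact common prefix/suffix lengths $w_k$, $x_k$ computed in Lemma \ref{commonpref}; the paper's route is self-contained modulo elementary primitivity facts and exhibits extra structural information along the way (namely $b_{m-1}-d_{m-1}\le 1$ and the shape $V_{m-1}=UV_{m-2}^h$), but at the cost of a longer case analysis.
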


\begin{proof} We have by (\ref{eq}) $$V_m=V_{m-1}^{b_m}V_{m-2}.$$ Suppose that $m=2$. Then $V_{-1}=b,V_0=a,V_1=a^{b_1-d_1}ba^{d_1}$, 
$V_2=V_{1}^{b_2}a$, so that $V_2$ ends with $a^{b_1-d_1}ba^{d_1}a$; thus $V_1$ is not suffix of $V_2$.

Therefore $m\geq 3$. Note that, since $V_{m-1}$ and $V_{m-2}$ are both suffixes of the same word $V_m$, the word $V_{m-2}$ is a suffix of $V_{m-1}$. 
Let $V_{m-2}^h$ be suffix of $V_{m-1}$, with $h$ maximal; then $h\geq 1$ and $V_{m-2}^{h+1}$ is a suffix of $V_m$, since $b_m\geq 1$ because $m\geq 2$. We show that 
$|V_{m-2}^{h+1}|>|V_{m-1}|$. Indeed, otherwise $V_{m-2}^{h+1}$ is not longer than $V_{m-1}$, and since both $V_{m-2}^{h+1}$ and $V_{m-1}$ are suffixes of the same word 
$V_m$, the word $V_{m-2}^{h+1}$ is a suffix of $V_{m-1}$, contradicting the maximality of $h$. We thus deduce that $V_{m-1}=UV_{m-2}^h$, where $U$ is shorter than $V_{m-2}$. 

Recall that $V_{m-1}=V_{m-2}^{b_{m-1}-d_{m-1}}V_{m-3}V_{m-2}^{d_{m-1}}$. 
If $b_{m-1}-d_{m-1} \ge 2$ then, since $U V_{m-2}$ is a prefix of $V_{m-1}$, it is a prefix of $V_{m-2} V_{m-2}$, a contradiction 
with the primitivity of $V_{m-2}$ ($U$ is nonempty, otherwise either $V_{m-1}$ is not primitive, or $V_{m-1}=V_{m-2}$, a contradiction in both cases). 
Therefore we have $b_{m-1}-d_{m-1} = 1$, hence
$V_{m-1}=V_{m-2} V_{m-3}V_{m-2}^{b_{m-1}- 1}$. Then,
$$
V_m = V_{m-1}^{b_m} V_{m-2} = (V_{m-2}V_{m-3}V_{m-2}^{b_{m-1}-1})^{b_m}V_{m-2} = V_{m-2} (V_{m-3}V_{m-2}^{b_{m-1}})^{b_m},
$$
and, as $V_{m-1}$ is a suffix of $V_{m}$, we get, by comparing suffixes of the same length, the equality 
$V_{m-1} = V_{m-3}V_{m-2}^{b_{m-1}}$. Since also $V_{m-1}=V_{m-2} V_{m-3}V_{m-2}^{b_{m-1}- 1}$, we deduce that $V_{m-3}V_{m-2}=V_{m-2}V_{m-3}$, a contradiction.
Thus $b_{m-1}-d_{m-1} =0$.

\end{proof}

\begin{lemma}\label{infer} Suppose that the representation (\ref{legal}) is greedy. If 
$d_m=b_m$, then $V_m$ is not a suffix of $V_mV_{m-1}$.
\end{lemma}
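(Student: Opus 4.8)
The plan is to descend two steps at a time in the index: I will reduce the self-overlap condition ``$V_m$ is a suffix of $V_mV_{m-1}$'' to the same condition for the pair $(V_{m-2},V_{m-3})$, and then run an induction on $m$ whose base is the elementary cases $m=1,2$. The first move is to cash in greedyness. Since $d_m=b_m$, the greedy rule \eqref{greedy} forces $d_{m-1}=0$, so \eqref{eq} yields the explicit shapes $V_m=V_{m-2}V_{m-1}^{b_m}$ and $V_{m-1}=V_{m-2}^{b_{m-1}}V_{m-3}$ (the latter requires $m\geq 3$, so $m=1,2$ are handled separately). Note $b_{m-1}=a_{m-1}\geq 1$ and $q_{m-1}>q_{m-2}$ whenever $m\geq 3$.

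The heart of the argument is a length computation. Writing $V_mV_{m-1}=V_{m-2}V_{m-1}^{b_m+1}$ and deleting its first $q_{m-1}$ letters (which consist of $V_{m-2}$ followed by the first $q_{m-1}-q_{m-2}$ letters of the initial $V_{m-1}$ block, using $q_{m-1}>q_{m-2}$), the suffix of $V_mV_{m-1}$ of length $q_m$ is seen to equal $s\,V_{m-1}^{b_m}$, where $s$ is the suffix of $V_{m-1}$ of length $q_{m-2}$. Comparing with $V_m=V_{m-2}V_{m-1}^{b_m}$, this shows $V_m$ is a suffix of $V_mV_{m-1}$ if and only if $V_{m-2}$ is a suffix of $V_{m-1}$. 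Next, since $b_{m-1}\geq 1$, the word $V_{m-1}=V_{m-2}^{b_{m-1}}V_{m-3}$ ends in $V_{m-2}V_{m-3}$, so its last $q_{m-2}$ letters coincide with those of $V_{m-2}V_{m-3}$; hence $V_{m-2}$ is a suffix of $V_{m-1}$ if and only if $V_{m-2}$ is a suffix of $V_{m-2}V_{m-3}$. Combining the two equivalences:
$$
V_m \text{ is a suffix of } V_mV_{m-1} \iff V_{m-2} \text{ is a suffix of } V_{m-2}V_{m-3}.
$$

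With this reduction the induction closes at once. If $d_{m-2}<b_{m-2}$, then Corollary \ref{prefixVV}, applied to the truncated legal representation $d_1,\dots,d_{m-2}$, already gives that $V_{m-2}$ is not a suffix of $V_{m-2}V_{m-3}$, so by the displayed equivalence $V_m$ is not a suffix of $V_mV_{m-1}$. If instead $d_{m-2}=b_{m-2}$, then $d_1,\dots,d_{m-2}$ is again a greedy representation whose top digit equals $b_{m-2}$ (greedyness is inherited by truncation), so the statement at index $m-2$ applies and gives that $V_{m-2}$ is not a suffix of $V_{m-2}V_{m-3}$; the equivalence again delivers the conclusion. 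The base cases are checked by inspecting first letters after the appropriate truncation: for $m=1$ one has $V_1=ba^{b_1}$ and $V_1V_0=ba^{b_1+1}$, and deleting the first letter leaves $a^{b_1+1}$, which begins with $a$ whereas $V_1$ begins with $b$; for $m=2$ (where $d_1=0$) one has $V_2=a(a^{b_1}b)^{b_2}$ and $V_2V_1=a(a^{b_1}b)^{b_2+1}$, and deleting the first $q_1$ letters leaves a word beginning with $b$, while $V_2$ begins with $a$.

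The main obstacle is the bookkeeping in the reduction: one must verify the two equivalences as genuine ``if and only if'' statements, tracking the inequalities $q_{m-1}>q_{m-2}$ and $b_{m-1}\geq 1$ that guarantee the factors overlap exactly as claimed. These are precisely the points where the restriction $m\geq 3$ is used and where the small cases must be dispatched by hand; everything else is a routine appeal to Corollary \ref{prefixVV} and the greedy rule \eqref{greedy}. As a sanity check, iterating the sub-case $d_{m-2}=b_{m-2}$ produces an alternating digit string, i.e.\ a Christoffel word by Corollary \ref{Cword}, for which borderlessness makes the conclusion transparent.
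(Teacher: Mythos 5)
Your proof is correct, and it shares its skeleton with the paper's while diverging at the decisive point. Both arguments begin identically: greedyness forces $d_{m-1}=0$, comparing $V_m=V_{m-2}V_{m-1}^{b_m}$ with the suffix of length $q_m$ of $V_mV_{m-1}=V_{m-2}V_{m-1}^{b_m+1}$ reduces the hypothesis to ``$V_{m-2}$ is a suffix of $V_{m-1}$'', and the factorization $V_{m-1}=V_{m-2}^{b_{m-1}}V_{m-3}$ pushes this down to ``$V_{m-2}$ is a suffix of $V_{m-2}V_{m-3}$'', the same condition two indices lower; the small cases $m=1,2$ are dispatched by inspection in both texts. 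The divergence is in how the two-step descent is closed. The paper argues by contradiction: it invokes Lemma \ref{dm0}, applied at $m-1$, to force $d_{m-2}=b_{m-2}$, iterates informally (``continuing like this'') until the digit string is seen to be alternating, concludes via Corollary \ref{Cword} that $V_m$ is a Christoffel word, and derives a contradiction because $V_{m-2}$ would then be a border of a borderless Christoffel word. You instead close each level by a case split inside a genuine strong induction: if $d_{m-2}<b_{m-2}$, the contrapositive of Corollary \ref{prefixVV} --- a direct consequence of the common-suffix computation in Lemma \ref{commonpref} --- rules out the suffix condition outright; if $d_{m-2}=b_{m-2}$, greedyness survives truncation and the induction hypothesis applies. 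Your route buys elementarity and rigor: it bypasses Lemma \ref{dm0} (whose primitivity-heavy proof the paper uses only for the present lemma) and the border-freeness of Christoffel words, and it replaces the informal iteration by an induction with explicit base cases; note that Corollary \ref{prefixVV} shows the suffix condition at level $m-2$ would itself force $d_{m-2}=b_{m-2}$, so you silently recover the digit-forcing the paper extracts from Lemma \ref{dm0}, at much lower cost. What the paper's version buys is the structural picture: it exhibits the alternating (Christoffel) digit string as the unique configuration that the hypothetical self-overlap would produce, in keeping with the role Christoffel words play throughout Section \ref{BORDS}.
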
 

\begin{proof} We have $d_{m-1}=0$. Suppose that the lemma is false. We show first that $V_{m}$ is a 
Christoffel word. It is enough to prove that the $d_i$ are alternatively $b_i$ and 0 (Corollary \ref{Cword}). 
Since $V_{m}=V_{m-2} V_{m-1}^{b_{m}}$ 
is a suffix of $V_mV_{m-1}=V_{m-2} V_{m-1}^{b_{m} + 1}$, the word $V_{m-2}$ is a suffix of $V_{m-1}$. Note that for $m=1$, this cannot be true,
and neither for $m=2$, since $V_1=a^{b_1}b$, $V_0=a$;
hence we must have $m\geq 3$.
Since $d_{m-1} = 0$, we deduce from Lemma \ref{dm0}, applied to $m-1$, that $d_{m-2}=b_{m-2}$.
 
Moreover, since $V_{m-1}=V_{m-2}^{b_{m-1}}V_{m-3}$ and $V_{m-2}$ is a suffix of $V_{m-1}$, $V_{m-2}$ is a suffix of $V_{m-2}V_{m-3}$.

We thus obtain that $V_{m-2}$ is a suffix of $V_{m-2}V_{m-3}$ and that $d_{m-2}=b_{m-2}$. Continuing like this, we infer that $V_{m}$ is the 
Christoffel word $V_m (\ldots , 0, b_m)$.

To conclude, note that $V_{m-2}$ is a prefix and a suffix of $V_m$, contradicting the fact that a Christoffel word has no border.
\end{proof}

Now we are armed to prove Theorem \ref{bords}.

\begin{proof}[of Theorem \ref{bords}] 
%We have $V_m = V_{m-1}^{b_m - d_m} V_{m-2} V_{m-1}^{d_m}$. %In particular, $V_1 = a^{b_1 - d_1} b a^{d_1}$; this case is left to the reader and we assume from now on that $m\geq 2$. 

$(i)$ If $d_m = b_m$, then $d_{m-1} = 0$ and we have
$$
V_m = V_{m-2} V_{m-1}^{b_m}, \quad V_{m-1} = V_{m-2}^{b_{m-1}} V_{m-3}. 
$$
Observe that $V_{m-1}=V_{m-2}^{b_{m-1}} V_{m-3}$ is a prefix of $V_{m} = V_{m-2}V_{m-2}^{b_{m-1}} V_{m-3} \cdots$ 
if and only if $V_{m-3}$ 
is a prefix of $V_{m-2}$, thus, by Corollary \ref{pref}, if and only if $V_{m-2}$ is not the
Christoffel word $V_{m-2}(\ldots, 0,b_{m-2})$. But $V_{m-2}$ cannot be equal to the latter word, since $V_m$ is by assumption not a Christoffel word, and $d_m=b_m,d_{m-1}=0$.
Thus $V_{m-1}$ is a prefix of $V_m$, and $B=V_{m-1}$ by Lemmas \ref{overlap} (applied to $m-1$) and \ref{XY}.

\smallskip 

$(ii)$ Suppose that $1 \le d_m \le b_m - 1$ and $1 \le d_{m-1} \le b_{m-1} - 1$.
There are no other occurrences of $V_{m-1}$ in $V_m$ than those seen in 
the factorization $V_m = V_{m-1}^{b_m - d_m} V_{m-2} V_{m-1}^{d_m}$;
indeed, this follows from Lemma \ref{overlap2} (applied to $m-1$), and our assumption on $d_{m-1}$.
Consequently, by Lemma \ref{YXY}, $B=V_{m-1}^\ell$. 
\smallskip

$(iii)$ 
If $1 \le d_m \le b_m - 1$ and $d_{m-1} = 0$, then $V_{m-1} = V_{m-2}^{b_{m-1}} V_{m-3}$. 
By Lemma \ref{overlap2} (applied to $m-1$), and the hypothesis $d_{m-1}=0$, any occurrence of $V_{m-1}$ in $V_m$ 
can be read on the factorisation $V_m = V_{m-1}^{b_m - d_m} V_{m-2} V_{m-1}^{d_m}$, or it begins 
by $V_{m-2}$. It follows from Lemma \ref{YXY} that $B=V_{m-1}^{\ell +1}$ if $V_{m-1}$ is a prefix of $V_{m-2}V_{m-1}$ and $b_m-d_m<d_m$, and otherwise $B=V_{m-1}^\ell$.

By Corollary \ref{pref1}, since $d_{m-1}=0$, $V_{m-1}$ is a prefix of $V_{m-2} V_{m-1}$ if and only if the sequence $d_1,\ldots,d_{m-1}$ is not alternating.

\smallskip 

$(iv)$ 
Suppose that $1 \le d_m \le b_m - 1$ and $d_{m-1} = b_{m-1}$. Then $V_{m-1} = V_{m-3} V_{m-2}^{b_{m-1}}$. 

We claim that there are no other occurrences of $V_{m-1}$ in $V_m$ than those given by the factorization
$V_m = V_{m-1}^{b_m - d_m} V_{m-2} V_{m-1}^{d_m}$. The claim is proved below. It follows from the claim and from Lemma \ref{YXY} that $B=V_{m-1}^\ell$.

By Lemma \ref{overlap2}, to prove the claim, it is enough to show that $V_{m-1}$ is not a prefix of $V_{m-2}V_{m-1}$, nor a suffix of $V_{m-1} V_{m-2}$.

This is immediate if $m=2$ and $b_1 \ge 1$, since we then get $V_0 = a$ and 
$V_1 = b a^{b_1}$. Thus, we assume $m \ge 3$.

By contradiction, suppose first that $V_{m-1}$ is a prefix of $V_{m-2}V_{m-1}$.
Since $V_{m-2}$ is a suffix of $V_{m-1}$, %(PAS VRAI SI $m=2$ et $b_1=0$), 
$V_{m-1}V_{m-2}$ is equal to $V_{m-2} V_{m-1}$, a contradiction. 

Supppose now that $V_{m-1}$ is a suffix of $V_{m-1} V_{m-2}$. This contradicts Lemma \ref{infer}, applied to $m-1$, since $d_{m-2}=0$ by greedyness.

\smallskip

$(v)$ 
If $d_m = 0$, then we have
$$
V_m = V_{m-1}^{b_m} V_{m-2},
$$
and, by Corollary \ref{pref}, either $V_{m-1}$ is the Christoffel word $V_{m-1}(\ldots,b_{m-3},0,b_{m-1})$, or $V_{m-2}$ is a prefix of $V_{m-1}$. The former case is excluded, since $V_m$ would be a Christoffel word. In the latter case,
$V_{m-1}^{b_m - 1} V_{m-2}$ is a border of $V_m$, 
and since $b_m \ge 2$, by Lemma \ref{YX}, $B=V_{m-1}^{b_m-1} V_{m-2}$.

$(vi)$ We suppose from now on that $d_m=0$ and $b_m = 1$. Then 
$$
V_m = V_{m-1} V_{m-2} = V_{m-2}^{b_{m-1} - d_{m-1}} V_{m-3} V_{m-2}^{d_{m-1} + 1}
$$
and there are several cases to distinguish. 

If $m=2$ and $b_1 \ge 1$, then $V_2 = a^{b_1 - d_1} b a^{b_1 + 1}$ and 
$B = a^h = V_{m-2}^h$. Assume that $m \ge 3$.

If $b_{m-1} - d_{m-1} \ge 1$ and 
$1 \le d_{m-2} \le b_{m-2} - 1$, %(PAS CLAIR CE QUE ÇA VEUT DIRE SI $m=2$), 
then it 
follows from Lemma \ref{overlap2} (applied to $m-2$) and the hypothesis on $d_{m-2}$, that there are no further occurrences 
of $V_{m-2}$ in 
$V_m$. Thus $B=V_{m-2}^h$ by Lemma \ref{YXY}.

If $b_{m-1} - d_{m-1} \ge 1$ and $d_{m-2} = 0$, then by Lemmas \ref{overlap2} and \ref{YXY}, $B=V_{m-2}^{h+1}$ 
if $b_{m-1}-d_{m-1}< d_{m-1}+1$ and $V_{m-2}$ is a prefix of $V_{m-3}V_{m-2}$, and $B=V_{m-2}^h$ otherwise. But, by Corollary \ref{pref1} with $m$ replaced by $m-1$, $V_{m-2}$ is a prefix of $V_{m-3}V_{m-2}$ if and only if 
the sequence $d_1,\ldots,d_{m-2}$ is not alternating.

If $b_{m-1} - d_{m-1} \ge 1$ and $d_{m-2} = b_{m-2}$, then by Lemma \ref{infer} with $m$ replaced by $m-2$, $V_{m-2}$ is not a suffix of $V_{m-2}V_{m-3}$. Thus by Lemmas 
\ref{overlap2} and \ref{YXY}, $B=V_{m-2}^h$.

$(vii)$ We have $m \ge 3$: indeed, for $m=2$, $V_2=b a^{b_1 + 1}$ is a Christoffel word, which was excluded.
Since $d_{m-1} = b_{m-1}$, then $d_{m-2} =0$ by the greedy condition, and $V_m = V_{m-3} V_{m-2}^{b_{m-1} + 1}$.
If $V_{m-2}$ is not a prefix of $V_{m-3}V_{m-2}$, then by Corollary \ref{pref1}, the sequence $d_1,\ldots,d_{m-2}$ is alternating; then, since $d_{m-2}=0, d_{m-1}=b_{m-1}, d_m=0$, the sequence $d_1,\ldots,d_m$ is alternating too, and $V_m$ is a Christoffel word, a contradiction. Thus  $V_{m-2}$ is a prefix of $V_{m-3}V_{m-2}$, and 
by Lemmas \ref{overlap} and \ref{XY}, we get that $B=V_{m-2}$. 
\end{proof}

\section{The Sturmian graph revisited}\label{revisited}

We turn now to the suffixes of the central palindrome $p$ corresponding to a given Christoffel class. For this, we define $L_m=\widetilde M_m$, the reversal of the word $M_m$, with the previous notations. 

\begin{corollary}\label{suffix} Each suffix of $p$ has a unique factorization
$$
L_0^{d_1}L_1^{d_2}\cdots L_{m-1}^{d_m}
$$
where $\sum_{1\leq i\leq m}d_iq_{i-1}$ is the lazy Ostrowski representation of its length. In particular
\begin{equation}\label{L}
p=L_0^{c_1}L_1^{c_2}\cdots L_{m-1}^{c_m},
\end{equation}
where the $c_i$ are defined at the end of Section \ref{construct}.
\end{corollary}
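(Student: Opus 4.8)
The plan is to read off the statement as the ``reversed'' form of Corollary \ref{frid}, using that $p$ is a palindrome. First I would note that $p$ has length $q_m-2$, since $M_m=pab$ or $pba$ is of length $q_m$ (Corollary \ref{standard}); thus the suffixes of $p$ are exactly the words of lengths $N=0,\ldots,q_m-2$. Because $p=\widetilde p$, the suffix of $p$ of length $N$ is the reversal of the \emph{prefix} of $p$ of length $N$, which by Corollary \ref{frid} equals $M_{m-1}^{d_m}\cdots M_0^{d_1}$ for any legal representation $N=\sum_{i=1}^m d_iq_{i-1}$. Reversing, and using that reversal inverts the order of a product, commutes with taking powers, and that $L_i=\widetilde M_i$, I obtain
$$
\widetilde{M_{m-1}^{d_m}\cdots M_1^{d_2}M_0^{d_1}}=\widetilde{M_0}^{\,d_1}\widetilde{M_1}^{\,d_2}\cdots\widetilde{M_{m-1}}^{\,d_m}=L_0^{d_1}L_1^{d_2}\cdots L_{m-1}^{d_m},
$$
so the suffix of $p$ of length $N$ is $L_0^{d_1}\cdots L_{m-1}^{d_m}$.

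To pin down a canonical factorization I would specialize to the lazy representation of $N$, which exists and is unique by Proposition \ref{greedylazy}(ii) (the relevant range $0\le N\le q_m-2$ lies inside $0\le N\le q_m+q_{m-1}-2$). Uniqueness of the factorization is then automatic from uniqueness of the lazy representation: since $|L_{i-1}|=|M_{i-1}|=q_{i-1}$, the length of $L_0^{d_1}\cdots L_{m-1}^{d_m}$ equals $\sum_{i=1}^m d_iq_{i-1}$, so any lazy exponent sequence producing a given suffix is forced to be the lazy representation of its length.

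For the displayed formula \eqref{L} I would apply the general statement to $p$ itself, the suffix of length $q_m-2$. The key point is that $(c_1,\ldots,c_m)=(b_1,\ldots,b_{m-1},b_m-1)$ is the complement $b_i-d_i$ of the greedy representation $(d_1,\ldots,d_m)=(0,\ldots,0,1)$ of $q_{m-1}$; by the remark preceding Proposition \ref{mirror}, such a complement is a lazy representation, here of $(q_m+q_{m-1}-2)-q_{m-1}=q_m-2=|p|$. Substituting $d_i=c_i$ into the general statement gives $p=L_0^{c_1}\cdots L_{m-1}^{c_m}$. (Alternatively one avoids the laziness bookkeeping: $(c_i)$ is a legal representation of $q_m-2$, so by the representation-independence in Corollary \ref{frid} the prefix of $p$ of length $q_m-2$, namely $p$, equals $M_{m-1}^{c_m}\cdots M_0^{c_1}$, and reversing $p=\widetilde p$ yields \eqref{L} at once.)

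I do not anticipate a genuine obstacle, since everything reduces to Corollary \ref{frid} through a single reversal. The two points needing care are the bookkeeping of the reversal — the order of the factors, which is exactly why the exponents run $d_1,\ldots,d_m$ along $L_0,\ldots,L_{m-1}$ rather than in the Frid order — and the verification that $(c_i)$ is the \emph{lazy}, and not the greedy, representation of $q_m-2$; this is precisely the reason the statement must be phrased with the lazy representation.
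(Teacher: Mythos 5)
Your proof is correct and takes essentially the same route as the paper's: since $p=\widetilde p$, the reversal of a suffix is a prefix, Corollary \ref{frid} identifies that prefix as $M_{m-1}^{d_m}\cdots M_0^{d_1}$, reversal gives the $L_i$-factorization, and uniqueness follows from the uniqueness of the lazy representation. Your careful check that $(c_1,\ldots,c_m)$ is the lazy representation of $q_m-2$ (via the complement remark before Proposition \ref{mirror}, or alternatively via mere legality and the representation-independence in Corollary \ref{frid}) just spells out what the paper disposes of by citing the identity $q_m-2=\sum_{1\leq i\leq m}c_iq_{i-1}$ from Lemma \ref{greedylazyineq} (iii).
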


\begin{proof} Let $s$ be a suffix of $p$, of length $N=\sum_{1\leq i\leq m}d_iq_{i-1}$, its lazy Ostrowski representation. Then $\widetilde s$ is a prefix of $p$. By Frid's result
(Corollary \ref{frid}), we have $\widetilde s=M_{m-1}^{d_m}\cdots M_0^{d_1}$. Applying the reversal mapping, which is an anti-automorphism, we obtain $s=L_0^{d_1}
L_1^{d_2}\cdots L_{m-1}^{d_m}$. Uniqueness follows from the uniqueness of the lazy representation.

The last assertion follows from the equality $q_m-2=\sum_{1\leq i\leq m}c_iq_{i-1}$, see Lemma \ref{greedylazyineq} (iii). 
%This follows from the calculation (disregarding the easy case $m=1$)
%$$
%\sum_{1\leq i\leq m}c_iq_{i-1}=-1-q_{m-1}+\sum_{1\leq i\leq m}a_iq_{i-1}$$$$=-1-q_{m-1}+\sum_{1\leq i\leq m}(q_{i-2}+a_iq_{i-1})-\sum_{1\leq i\leq m}q_{i-2}$$
%$$=-1-q_{m-1}+\sum_{1\leq i\leq m}q_{i}-\sum_{1\leq i\leq m}q_{i-2}=q_m-2,
%$$
%since $q_{-1}=0$ \yb{and $q_0 = 1$.} 
\end{proof}

The previous corollary has a graph-theoretic interpretation. 
We construct an edge-labelled directed graph $(V,E)$, that we shall call {\it compact graph} for short.
It will turn out to be a graph introduced in \cite{EMSV}, where it is called the 
{\it compact directed acyclic word graph of  $p$}. 

For the construction of this graph, it is convenient to view (\ref{L}) as a word 
over  
the letters $L_0,\ldots,L_{m-1}$; in particular we consider prefixes of this 
word, which are the elements of $V$; the latter set has therefore $c_1+\cdots+c_m+1$ elements. 
We denote by $1$ the vertex corresponding to the empty word.
For each vertex $UL_i, 0\leq i\leq m-1$, there is an edge labelled 
$L_i$ from $U$ to $UL_i$:
$$
U \xrightarrow[]{L_i} UL_i.
$$ 
Moreover, if $i<m-1$ and $k\geq 1$, then for each vertex of the form $UL_i^kL_{i+1}, k\geq 1$, there is an edge labelled $L_{i+1}$ from $U$ to 
$UL_i^kL_{i+1}$: 
$$
U \xrightarrow[]{L_{i+1}} UL_i^kL_{i+1}.
$$
The construction is illustrated in Figure \ref{stgraph}.

We call the vertex $1$ the {\it origin}.
The label of a path in this graph is as usual the product of the labels of the edges of this path.

\begin{figure}
%\centering
\begin{tikzpicture}
           
           \draw [fill=pink] (-4.5,0) circle (1 mm);
           \draw (-3.5,0) circle (1 mm);
           \draw (-1,0) circle (1 mm);
           \draw [fill=pink] (0,0) circle (1 mm);         
           \draw (1,0) circle (1 mm);
           \draw (3.4,0) circle (1 mm);
           \draw  [fill=pink] (4.4,0) circle (1 mm);
           \draw   (5.4,0) circle (1 mm);

           \draw (-5.8,0) node {$\cdots$};
           \draw[->,>=stealth'] (-5.4,0) to (-4.6,0);
           \draw[->,>=stealth']  [thick,red] (-4.4,0) to (-3.6,0);
           \draw[->,>=stealth']  [thick,red] (-3.4,0) to (-2.6,0);
           \draw (-2.2,0) node {$\cdots$};
           \draw[->,>=stealth'] [thick,red] (-1.9,0) to (-1.1,0);
           \draw[->,>=stealth'] [thick,red] (-0.9,0) to (-0.1,0);
           \draw[->,>=stealth'][thick,blue] (0.1,0) to (0.9,0);
           \draw[->,>=stealth'] [thick, blue](1.1,0) to (1.9,0);
           \draw (2.2,0) node {$\cdots$};
           \draw[->,>=stealth'] [thick,blue](2.5,0) to (3.3,0);
           \draw[->,>=stealth'] [thick, blue](3.5,0) to (4.3,0);
           \draw[->,>=stealth'] [thick,green] (4.5,0) to (5.3,0);
           \draw (5.8,0) node {$\cdots$};
           
           \draw[->,>=stealth'][thick,blue] (-4.5,-0.11) to [bend right=20]  (1,-0.11);
           \draw[->,>=stealth'] [thick,blue](-3.5,-0.11) to [bend right=15]   (1,-0.11);
           \draw[->,>=stealth'] [thick,blue](-1,-0.11) to [bend right=10]   (1,-0.11);
           
           \draw[->,>=stealth'] [thick,green] (0,0.11) to [bend left=30]   (5.3,0.11);
           \draw[->,>=stealth'] [thick,green] (1,0.11) to [bend left=30]   (5.3,0.11);
           \draw[->,>=stealth'] [thick,green] (3.4,0.11) to [bend left=30]   (5.3,0.11);

           \draw (-5,0) node [above]{$L_{i-1}$};
           \draw (-4,0) node [above]{$L_{i}$};
           \draw (-3,0) node [above]{$L_{i}$};
           \draw (-1.5,0) node [above]{$L_{i}$};
           \draw (-0.5,0) node [above]{$L_{i}$};
           \draw (0.5,0) node [above]{$L_{i+1}$};
           \draw (1.5,0) node [above]{$L_{i+1}$};
           \draw (2.9,0) node [above]{$L_{i+1}$};
           \draw (3.9,0) node [above]{$L_{i+1}$};
           \draw (4.9,0) node [above]{$L_{i+2}$};
           
           \draw (-2,-1) node {$L_{i+1}$};
           \draw (3,1.2) node {$L_{i+2}$};
           
           \draw (0,-2) node {{\it blue arrows are all labelled} $L_{i+1}$};
           \draw (0,-2.5) node {{\it green arrows are all labelled} $L_{i+2}$};
           \draw (0,-3) node {{\it a pink node separates $L_{j-1}$-arrows from $L_j$-arrows}};
           \draw (0,-3.5) node {{\it for any $j=1,\ldots m$, there are $c_{j+1}$ horizontal arrows labelled} $L_{j}$};

\end{tikzpicture}
\caption{The compact graph $(V,E)$}
\label{stgraph}
\end{figure}

\begin{corollary}\label{compact} For each suffix $s$ of $p$, there is a unique path in the compact graph, starting from the origin, and with label $s$.
%=L_0^{d_1}L_1^{d_2}\cdots L_{m-1}^{d_m}$, where $\sum_{1\leq i\leq m}d_iq_{i-1}$ is the lazy representation of the length of $s$.
\end{corollary}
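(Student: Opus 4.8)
The plan is to deduce the corollary from Corollary \ref{suffix} by putting the paths of the compact graph in bijection with lazy Ostrowski representations. Throughout I view $w = L_0^{c_1}\cdots L_{m-1}^{c_m}$ as a word over the alphabet $\{L_0,\ldots,L_{m-1}\}$, so that the vertices are its prefixes. First I would record the local structure of the graph. A vertex lying $e$ letters inside the $j$-th block (its active letter being $L_{j-1}$, with $0 \le e < c_j$) has exactly two outgoing edges: the horizontal edge labelled $L_{j-1}$, and, when $j \le m-1$, the skip edge labelled $L_j$ running to $L_0^{c_1}\cdots L_{j-1}^{c_j}L_j$; a vertex sitting at a block boundary has instead the two labels $L_j$ (horizontal) and $L_{j+1}$ (skip). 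In every case the two admissible labels are distinct and of nondecreasing index. It follows that the label sequence read along any path from the origin is nondecreasing, hence of the form $L_0^{d_1}L_1^{d_2}\cdots L_{m-1}^{d_m}$, and that, since each label selects a unique outgoing edge, a path is completely determined by the tuple $(d_1,\ldots,d_m)$.

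Next I would determine which tuples occur. Counting occurrences shows $0 \le d_i \le c_i$ for all $i$: the $j$-th block offers at most $c_j$ copies of $L_{j-1}$, and a skip into it reads only its single landing letter, so at most $c_j$ copies of $L_{j-1}$ can be read. The decisive point is how a path leaves the $j$-th block to reach letters of index $\ge j$: from an interior vertex the only edges read $L_{j-1}$ or $L_j$, so one cannot reach $L_{j+1}$ without first reading some $L_j$; to read $L_{j+1}$ while reading no $L_j$ at all, the path must depart from the block boundary, i.e. one must have $d_j = c_j = b_j$. Hence $d_{j+1} = 0$ together with a nonzero later digit forces $d_j = b_j$, which is precisely condition \eqref{lazy}. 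Therefore the tuples realized by paths are exactly the legal representations that are lazy and satisfy $d_i \le c_i$ for all $i$; conversely, given such a tuple the same analysis builds the unique path stepwise, the required horizontal or skip edge being present at each stage.

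It remains to match these tuples with the suffixes of $p$. By Corollary \ref{suffix} the suffixes are indexed by the lazy representations of $N = 0,\ldots,q_m-2$, the suffix of length $N = \sum_i d_i q_{i-1}$ being $L_0^{d_1}\cdots L_{m-1}^{d_m}$; and this word is exactly the $\{a,b\}$-label of the path whose edge sequence is $L_0^{d_1}\cdots L_{m-1}^{d_m}$. To close the loop I must check that a lazy representation satisfies $N \le q_m - 2$ if and only if $d_i \le c_i$ for every $i$. As $c_i = b_i$ for $i < m$, only the top digit is at stake, and I would show that a lazy representation with $d_m = b_m$ forces $\sum_{i<m} d_i q_{i-1} \ge q_{m-2} - 1$: minimizing this sum under the lazy constraint produces the alternating tail, whose value is $q_{m-2} - 1$ exactly as in the proof of Lemma \ref{Ost-C}; hence $d_m = b_m$ gives $N \ge q_m - 1$. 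Thus each suffix $s$ corresponds to a single admissible tuple and so to a unique path from the origin with label $s$. (Uniqueness is also visible directly: two paths with label $s$ yield lazy representations of $|s|$, which coincide by Proposition \ref{greedylazy}(ii), and equal edge sequences give equal paths.)

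The step I expect to be the main obstacle is reconciling the bound $d_m \le b_m$ permitted by lazy representations with the bound $d_m \le c_m = b_m - 1$ imposed by the graph; this is the only place where a genuine continuant computation enters. I also anticipate careful bookkeeping at the block boundaries and in the degenerate cases $a_1 = 1$ (empty first block, $c_1 = 0$) and $a_m = 1$ (empty last block, $c_m = 0$), as well as small $m$, where certain blocks or skip edges are absent and the local analysis must be adjusted accordingly.
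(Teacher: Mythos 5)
Your proposal is correct, and its main line coincides with the paper's proof: both identify the labels of paths from the origin with words $L_0^{d_1}\cdots L_{m-1}^{d_m}$ whose exponent tuple is a lazy Ostrowski representation bounded by the $c_i$, and then invoke Corollary \ref{suffix}; the top-digit point you rightly isolate as the only genuine computation (a lazy representation with $d_m=b_m$ forces $N\geq q_m-1$, reconciling $d_m\leq b_m$ with $d_m\leq c_m=b_m-1$) is exactly what the paper disposes of by citing Corollary \ref{lazyBig} — note that this, or Lemma \ref{altlazyineq} (ii), is the right reference, not Lemma \ref{Ost-C}, which concerns greedy representations. The one genuine divergence is uniqueness. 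The paper observes that the words $M_i$ end alternately in $a$ and $b$ (Corollary \ref{standard}), so the $L_i$ begin alternately with $a$ and $b$; hence the two edges leaving any vertex have labels starting with distinct letters, the graph is deterministic over $\{a,b\}$, and \emph{any} word is the label of at most one path from the origin. You instead argue that two paths with the same $\{a,b\}$-label $s$ yield two lazy representations of $|s|$, which coincide by the uniqueness part of Proposition \ref{greedylazy} (ii), and that the exponent tuple determines the path since the two outgoing labels at each vertex are distinct letters of the auxiliary alphabet $\{L_0,\ldots,L_{m-1}\}$ and the label sequence along a path is nondecreasing. This is valid and correctly avoids the pitfall that distinctness of the $L_i$ as formal letters does not by itself prevent two different edge sequences from concatenating to the same $\{a,b\}$-word; what it buys is independence from Corollary \ref{standard}, at the cost of yielding uniqueness only for suffixes of $p$, whereas the paper's determinism argument applies to arbitrary words and is reused when the compact graph is turned into the deterministic automaton $\mathcal A$. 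Your flagged degenerate cases ($a_1=1$, $a_m=1$, small $m$) are indeed only bookkeeping: the intermediate $c_i$ are at least $1$, and an empty extreme block simply deletes the corresponding letters, consistently with $d_1\leq b_1=c_1$ and with $d_m\leq c_m$ as enforced above.
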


\begin{proof} We know that, for $i=0,\ldots,m-1$, the last letter of the word $M_i$ is alternatively $a$ and $b$ (Corollary \ref{standard}). Hence the first letter of $L_i$ is alternatively $a$ and $b$. By 
construction, each vertex has at most two outgoing edges, and then they are labelled $L_i$ and $L_{i+1}$. Thus the graph has the following {\it deterministic} property: for each 
vertex, and for any two edges starting from it, the labels of these edges begin by distinct letters. This property ensures that for each word, there is at most one path starting from 
the origin and having this word as label. This proves uniqueness in the statement.

Consider some path from the origin in the graph. By inspection of the graph in Figure \ref{stgraph}, its label $s$ is a product $L_0^{d_1}\cdots L_{i+1}^{d_{i+2}}\cdots L_{m-1}^{d_m}$, where for any $j=0,\ldots,m-1$, $d_{j+1}$ is the number of edges labelled $L_{j}$  in the path; hence $0\leq d_{j+1}\leq c_{j+1}\leq b_{j+1}$. Thus $N=\sum_{1\leq i\leq m}d_iq_{i-1}$ is a legal Ostrowski representation of the length $N$ of $s$. This representation is lazy: indeed, suppose that for some $i\geq 0$, $d_{i+2}=0$ (with $i+2\leq m$); this means that the path has no edge labelled $L_{i+1}$; looking at the figure (where these edges are blue), we see that either the path has no vertex at the right of the central pink vertex (and then for all $j\geq i+2$, $d_j=0$), or the path must pass through this vertex, which implies that the path passes through all $L_i$-edges (red in the figure), and therefore $d_{i+1}=c_{i+1}=b_{i+1}$ (the last equality holds
since $i+1<m$). Hence the representation is lazy, and by Corollary \ref{suffix}, $s$ is a suffix of $p$.

Let now $s$ be any suffix of $p$. Then by Corollary \ref{suffix}, $s$ is equal to $L_0^{d_1}L_1^{d_2}\cdots L_{m-1}^{d_m}$, where $\sum_{i=1}^m d_iq_{i-1}$ is the lazy Ostrowski representation of the length of $s$. Let $k$ be maximal such that $d_k\neq 0$. Then $s=L_0^{d_1}L_1^{d_2}\cdots L_{k-1}^{d_k}$. We claim that for each $j=1,\ldots,k$, there is a path labelled $L_0^{d_1}\cdots L_{j-1}^{d_j} $ from the origin until the vertex $L_0^{c_1}\cdots L_{j-2}^{c_{j-1}}L_{j-1}^{d_j}$. 
The claim is clear for $j=1$, since one has the edges $1\to L_0 \to L_0^2 \to \cdots \to L_0^{c_1}$, all labelled $L_0$ and since $d_1\leq b_1=c_1$. Admitting the claim for $j\leq 
k-1$, we prove it for $j+1\leq k$. If $d_{j+1}=0$, then $j+1<k$ and by laziness, $d_j=b_j=c_j$ (the last equality 
holds since $j<m$); then the path for $j+1$ is the same as that for $j$: 
there is a path labelled $L_0^{d_1}\cdots L_{j-1}^{d_j} =L_0^{d_1}\cdots L_{j-1}^{d_j} L_j^{d_{j+1}}$ from the origin until the vertex $L_0^{c_1}\cdots L_{j-2}^{c_{j-1}}L_{j-1}^{d_j}
=L_0^{c_1}\cdots L_{j-2}^{c_{j-1}}L_{j-1}^{c_j}L_j^{d_{j+1}}$. Suppose now that $d_{j+1}\neq 0$; in the graph we have the $c_{j+1}$ consecutive edges $L_0^{c_1}\cdots L_{j-2}^{c_{j-1}}L_{j-1}
^{d_j} \to L_0^{c_1}\cdots L_{j-2}^{c_{j-1}}L_{j-1}^{c_j}L_j \to L_0^{c_1}\cdots L_{j-2}^{c_{j-1}}L_{j-1}^{c_j}L_j^2 \to\cdots \to L_0^{c_1}\cdots L_{j-2}^{c_{j-1}}L_{j-1}^{c_j}L_j 
^{c_{j+1}}$, all labelled $L_{j}$; note that  $d_{j+1}\leq c_{j+1}$: indeed, the representation is legal, hence $d_{j+1}\leq b_{j+1}$ and $b_{j+1}=c_{j+1}$, except if $j+1=m$; but in this case, since $s$ is of length at most $q_m-2$, we have 
$d_m\leq b_m-1=c_m$ by Corollary \ref{lazyBig}; thus the claim follows for $j+1$ too.

Thus, for $j=k$, we obtain that there is a path starting from the origin and labelled $s$, in the graph.
\end{proof}

In the compact graph $(V,E)$, replace each label of an edge by its length. We obtain a graph whose edges 
are labelled by positive natural numbers. This time, the sum of the labels of the edges of a path is called the {\it label} 
of this path. Since the suffixes of $p$ have all distinct lengths, we obtain

\begin{corollary}\label{SturmianGraph} For each natural number $N=0,1,\ldots,q_m-2$ there is a unique path in this graph, starting from the origin, with label $N$.
\end{corollary}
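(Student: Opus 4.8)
The plan is to deduce the statement directly from Corollary \ref{compact}. First I would observe that passing from the compact graph to the relabelled graph changes neither the vertices nor the edges, but only the value attached to each edge; hence a path starting from the origin is the very same combinatorial object in both graphs. If such a path reads the word $s$ in the compact graph, then, since each edge originally labelled $L_i$ is relabelled by the number $|L_i| = |M_i| = q_i$, its numerical label in the new graph is the sum of the lengths of the words read along its edges, that is exactly $|s|$.

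Next I would recall from Corollary \ref{compact} that the map sending an origin-path to the word it reads is a bijection onto the set of suffixes of $p$. It then remains to understand the length function on these suffixes. Since $M_m = pab$ or $M_m = pba$ has length $q_m$ (Corollary \ref{standard}), the central word $p$ has length $q_m - 2$, so its suffixes realise each length $0, 1, \ldots , q_m - 2$ exactly once; in particular $s \mapsto |s|$ is a bijection from the set of suffixes of $p$ onto $\{0, 1, \ldots , q_m - 2\}$.

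Composing these two bijections, the assignment sending an origin-path to its numerical label is a bijection from the set of origin-paths onto $\{0, 1, \ldots , q_m - 2\}$; this is precisely the assertion that for each such $N$ there is one and only one origin-path of numerical label $N$. For the uniqueness half one may also argue directly: two origin-paths with the same numerical label $N$ read words of the same length $N$, hence equal words (two suffixes of $p$ of equal length coincide), and are therefore the same path by the uniqueness in Corollary \ref{compact}.

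There is essentially no obstacle here beyond bookkeeping: the entire content is carried by Corollary \ref{compact}, and all that must be checked is the harmless identity ``numerical label of a path equals the length of the word it reads'' together with the elementary fact that the suffix-length map is a bijection onto $\{0, \ldots , q_m - 2\}$.
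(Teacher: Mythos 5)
Your proof is correct and is essentially the paper's own argument: the paper derives Corollary \ref{SturmianGraph} in one line from Corollary \ref{compact}, noting exactly as you do that the numerical label of a path is the length of the word it reads and that the suffixes of $p$ (a word of length $q_m-2$, since $M_m=pab$ or $pba$ has length $q_m$) have pairwise distinct lengths realising each value $0,1,\ldots,q_m-2$ exactly once. Your write-up merely spells out this bookkeeping in more detail.
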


The compact graph is the {\it Sturmian graph} of \cite{EMSV,EFGMS}. This will be verified now. 

We define the notion of {\it generalized automaton}: it is a directed graph, whose vertices are called {\it states}, whose edges are called {\it transitions} and are labelled by nonempty words, with a distinguished vertex called the 
{\it initial state}, and a distinguished subset of the vertices, called the {\it set of final states}; the automaton is called {\it deterministic} if for any two edges outgoing from a vertex, 
their labels have distinct first letters; the generalized automaton is called {\it homogeneous} if for each vertex, the incoming edges all have the same label. The language 
recognized by a generalized automaton is the set of words which are labels of some path from the initial state to some final state. 

The compact graph is a deterministic homogeneous generalized automaton. Its initial state is the empty word, 
and each state is final; it recognizes the set of suffixes of $p$, by Corollary \ref{compact}. 

We may turn this generalized automaton into an {\it automaton} $\mathcal A$ (that is, where all the labels of the edges are letters), as follows: using Figure \ref{stgraph}, note that there is a maximal 
horizontal path labelled $L_{i+1}$:
$$q_0   \overset{L_{i+1}}{\to}    q_1   \overset{L_{i+1}}{\to}  \cdots \overset{L_{i+1}}{\to} q_c,$$
where $c=c_{i+2}$ is the number of horizontal edges labelled $L_{i+1}$ (the blue edges) in the compact graph.
Replace this path by an horizontal path whose edges are labelled by the letters of $L_{i+1}^{c}$, adding enough new vertices and new edges:
$$q_0   \overset{x}{\to}    q'  \cdots \overset{y}{\to} q_c,$$
where $x$ is the first letter of $L_{i+1}$, and $y$ its last.
Now, let each curved blue edge in the figure point onto the vertex $q'$, and have new label $x$.
The initial state of $\mathcal A$ is unchanged, and similarly for the final states.
%for each $i=0,\ldots,m-1$, replace each 
%{\it horizontal} edge labelled $L_{i+1}$, by an horizontal path each edge of which is labelled by the successive letters of the word $L_{i+1}$, adding therefore $|L_{i+1}|-1$ new vertices for 
%each blue edge, and thus $b_{i+2}(|L_{i+1}|-1)$ edges for each $i$. 
%Call $q_i$ the leftmost of these new edges; then let each curved blue edge point onto $q_i$ and have new label $x$, the first letter of $L_{i+1}$.

A moment's thought shows that this new automaton $\mathcal A$ is deterministic, homogeneous, and recognizes the same language as the compact graph, that is, the set of suffixes of 
$p$. This automaton has $|p|+1$ vertices (because  there is in $\mathcal A$ a path labelled $p$ containing all vertices); hence it is minimal, in the sense that it has the smallest 
number of vertices among all automata recognizing this language: indeed, such an automaton must have at least $|p|+1$ vertices. 
%Note that the final states are   

There is a simple algorithm to recover the compact graph from the minimal automaton $\mathcal A$ of the set of suffixes of $p$: one chooses some vertex $v$ which is not final, 
which has only one outgoing edge $v\overset{t}{\to}v'$; one considers all incoming edges, all labelled by the same letter $z$ (since the automaton is homogeneous); then one 
suppresses the vertex $v$ and one lets the incoming edges point towards $v'$, adding $t$ at the end of their label. Iterating this procedure, called {\it compaction}, one 
recovers $(V,E)$.

In the light of \cite{EMSV} (Theorem 19, and beginning of Section 19, where compaction is described\footnote{The notion of compaction of an automaton appears in \cite{BBHME}.}), this proves that the graph of Corollary \ref{SturmianGraph} is the Sturmian graph. It implies also Theorem 47 of \cite{EFGMS}: each path in the Sturmian graph, with label $N$, corresponds to the lazy Ostrowski representation of $N$.

We indicate now how to construct the compact graph using the {\it iterated palindromization} of Aldo de Luca \cite{dL} (see also \cite[Section 12.1]{Re}). Recall the definition of this operator, denoted $Pal$. One defines first the {\it right palindromic closure} of a word $w$, denoted $w^{(+)}$: it is the shortest palindrome having $w$ as 
prefix. Then the mapping $Pal$ from a free monoid into itself is defined recursively by $Pal(1)=1$ and $Pal(wx)=(Pal(w)x)^{(+)}$ for any word $w$ and any letter $x$. The 
theorem of de Luca is that $Pal$ is a bijection from $\{a,b\}^*$ onto the set of central words.

If $p=Pal(v)$, $v$ is called the {\it directive word} of the central word $p$.
It follows from the definition of $Pal$ that the palindromic prefixes of $Pal(v)$ are the words $Pal(u)$, 
where $u$ runs through the prefixes of $v$.

\begin{proposition}\label{directive} The central word $p$ has the directive word $v=a^{c_1}b^{c_2}a^{c_3}\cdots (a \, \mbox{or} \,\, b)^{c_m}$. The word $p=Pal(v)$ has $1+c_1+\cdots+c_m$ 
palindromic prefixes, which are the formal prefixes of (\ref{L}). In particular, $L_i=Pal(a^{c_1}\cdots (a \, \mbox{or} \,\, b)^{c_{i}})^{-1}Pal(a^{c_1}\cdots (a \, \mbox{or} \,\, 
b)^{c_{i}}(b \, \mbox{or} \,\, a))$.
\end{proposition}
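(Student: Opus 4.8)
The plan is to recognise the palindromic prefixes of $p$ as the formal prefixes of \eqref{L} and then to read the directive word off this chain, using the theorem of de Luca recalled above: $Pal$ is a bijection onto the central words, and the palindromic prefixes of $Pal(v)$ are exactly the words $Pal(u)$ with $u$ a prefix of $v$. Write the formal prefixes of \eqref{L} as $w_{j,e}=L_0^{c_1}\cdots L_{j-1}^{c_j}L_j^{e}$, with $0\le e\le c_{j+1}$. Each is by definition a prefix of $p=L_0^{c_1}\cdots L_{m-1}^{c_m}$. Assigning to it the digits $d_1=c_1,\dots,d_j=c_j$, $d_{j+1}=e$ and $d_i=0$ for $i>j+1$ yields a lazy Ostrowski representation: the digits $d_1,\dots,d_j$ equal their maxima $c_i=b_i$ (as $i\le j<m$), so condition \eqref{lazy} holds vacuously. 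Hence, by Corollary \ref{suffix}, $w_{j,e}$ is also a suffix of $p$. Since $p=\widetilde p$, a word that is at once a prefix and a suffix of $p$ equals its own reversal, i.e. is a palindrome. Thus every formal prefix of \eqref{L} is a palindromic prefix of $p$, giving $1+c_1+\cdots+c_m$ distinct palindromes, totally ordered for the prefix relation.

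The crux is to show that no further palindrome occurs between two consecutive formal prefixes $w=w_{j,e}$ and $w'=wL_j$. As $w$ and $w'$ are palindromes, $\widetilde{L_j}=M_j$ and $\widetilde w=w$, reversing $w'=wL_j$ gives the key identity $M_j w=wL_j$. Because $M_j$ is a standard word, hence primitive and (for $j\ge 1$) not a palindrome, the word equation $M_jw=wL_j$ with $|M_j|=|L_j|=q_j$ forces $w=M_j^{k}r$ for some $k\ge 0$ and some nonempty proper prefix $r$ of $M_j$; consequently $w'=M_j^{k+1}r$. For $k\ge 1$, Lemma \ref{YX} (with $Y=M_j$, $X=r$) shows that the longest proper border of $w'$ is $M_j^{k}r=w$. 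The remaining cases, namely $j=0$ and the single boundary value $w=a^{c_1}$ (where $k=0$), are immediate: one deals respectively with powers of a single letter and with $w'=a^{b_1}ba^{b_1}$, whose longest border is visibly $a^{b_1}=w$.

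Now suppose $R$ is a palindromic prefix of $p$ with $w\subsetneq R\subsetneq w'$. Then $R$ is a prefix of the palindrome $w'$, hence also a suffix of $w'$, i.e. a proper border of $w'$ strictly longer than $w$ — contradicting the previous paragraph. Therefore the palindromic prefixes of $p$ are exactly the formal prefixes of \eqref{L}; in particular there are $1+c_1+\cdots+c_m$ of them, forming the chain $1=w_{0,0}\subsetneq w_{0,1}\subsetneq\cdots\subsetneq p$. By de Luca's theorem this chain is $Pal(u_0)\subsetneq Pal(u_1)\subsetneq\cdots$ for the successive prefixes $u_t$ of the directive word $v$ of $p$, and the $t$-th letter of $v$ is the one at position $|Pal(u_{t-1})|+1$ in $Pal(u_t)$, i.e. the first letter of the block $L_j$ appended at that step. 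By Corollary \ref{standard}, $M_j$ ends in $a$ for $j$ even and in $b$ for $j$ odd, so $L_j=\widetilde{M_j}$ begins with $a$ or $b$ accordingly; since $L_j$ is appended $c_{j+1}$ times, $v=a^{c_1}b^{c_2}a^{c_3}\cdots(a\ \mbox{or}\ b)^{c_m}$ and $p=Pal(v)$.

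Finally, the palindromic prefix reached once the blocks $L_0,\dots,L_{i-1}$ have been completed is $Pal(a^{c_1}\cdots (a\ \mbox{or}\ b)^{c_i})=L_0^{c_1}\cdots L_{i-1}^{c_i}$, and appending one more directive letter gives $Pal(a^{c_1}\cdots (a\ \mbox{or}\ b)^{c_i}(b\ \mbox{or}\ a))=L_0^{c_1}\cdots L_{i-1}^{c_i}L_i$; cancelling the former on the left of the latter yields the displayed formula for $L_i$. I expect the main obstacle to be the second paragraph: establishing the identity $M_jw=wL_j$, solving this word equation into the power form $M_j^{k+1}r$, and then invoking Lemma \ref{YX} (and \ref{XY} or a direct argument for the low-degree boundary cases). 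Once the longest border of $wL_j$ is pinned down to be $w$, everything else is bookkeeping with de Luca's recursion.
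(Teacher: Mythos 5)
Your proof is correct in outline but runs in the opposite direction to the paper's, and the two routes are genuinely different. The paper pins down the directive word first, externally: from the Slope $[0,a_1,\ldots,a_m]$ of $M_m$ (Theorem \ref{Vm}) it passes to the slope $s$, invokes the coding of the Stern--Brocot path to $s$ by $a^{c_1}b^{c_2}\cdots$ (\cite[Theorem 14.2.3]{Re}, after \cite{GKP}), and transports this through the correspondence between the Stern--Brocot tree, the tree of Christoffel words and the tree of central words to obtain $p=Pal(v)$; only then does it identify the palindromic prefixes with the formal prefixes of \eqref{L}, by counting: Lemma \ref{pal} and Corollary \ref{suffix} exhibit $1+c_1+\cdots+c_m$ palindromic suffixes (hence prefixes, as $p$ is a palindrome) of $p$, while de Luca's theory says $Pal(v)$ has exactly $|v|+1$ of them. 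You instead prove the identification directly, with no tree machinery: existence via Corollary \ref{suffix} together with the prefix-and-suffix-of-a-palindrome observation (a neat substitute for Lemma \ref{pal}), and completeness via the word equation $M_jw=wL_j$ and the longest-border Lemma \ref{YX}; the directive word is then read off the chain by de Luca's recursion and Corollary \ref{standard}. Your route buys a self-contained argument within the paper's own toolkit, at the price of the border analysis; the paper's is shorter but leans on two imported results.

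Two small repairs are needed in your second paragraph. First, the claim that $M_j$ is not a palindrome for $j\ge 1$ is false: $M_1=b$ when $a_1=1$, and $M_3=bab=L_3$ when $a_1=a_2=a_3=1$; taking $m=4$ and $a_4\ge 2$, the consecutive pair $(w,w')=(bab,\,babbab)$ then has $r$ empty, so ``$r$ nonempty'' cannot be asserted in general. The conclusion nevertheless survives: if $r$ is empty, then $w=M_j^k$ with $k\ge 1$ and $w'=M_j^{k+1}$, whose longest border is $M_j^k=w$ by primitivity of $M_j$ (the proof of Lemma \ref{YX} goes through verbatim with $X$ empty). Second, your enumeration of the $k=0$ boundary cases is incomplete: since $|w_{j,0}|=\sum_{i=1}^{j}c_iq_{i-1}=q_j+q_{j-1}-2$ for $j\le m-1$ (Lemma \ref{greedylazyineq} (iii)), $k=0$ forces $e=0$ and $q_{j-1}\le 1$, which besides $j\le 1$ also allows $j=2$ with $a_1=1$, giving $w=b^{a_2}$ and $w'=b^{a_2}ab^{a_2}$; this case is settled exactly like your case $a^{b_1}ba^{b_1}$, since $w'$ contains a single occurrence of $a$. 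With these two patches your argument is complete and correct.
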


\begin{proof} We know that the Slope of $M_m$, and in particular of the lower Christoffel word in the conjugation class of $M_m$, is $S=[0,a_1,\ldots,a_n]$ (Theorem \ref{Vm}).  
It follows from the analysis at the end of Section \ref{Cword0} that 
$s=[0,b_1,\ldots,b_m]$ if $b_1\geq 1$, and $s=[b_2,\ldots,b_m]$ if $b_1=0$. It follows from Theorem 14.2.3 in 
\cite{Re} (the result is from \cite[Section 14.2.3]{GKP})
that the path leading from the root to the node $s$ in the Stern-Brocot tree is coded by the word 
$v=a^{c_1}b^{c_2}\cdots (a \, \mbox{or} \,\, b)^{c_m}$ ($a$ means left, and $b$ means right). It follows then from the correspondence between the Stern-Brocot tree, the tree of 
Christoffel words, and the tree of central words (see Sections 12.1, 14.1 and 14.2 in \cite{Re}) that the path from the root to $p$ in the latter tree is coded by $v$, 
proving the first assertion.

The word $p=Pal(v)$ has $|v|+1$ palindromic prefixes. It follows from Lemma \ref{pal} that all the words $M_{i-1}^{c} M_{i-2}^{c_{i-1}}\cdots M_{1}^{c_2} M_0^{c_1}$, where $i=1,\ldots,m$, $0\leq c\leq c_i$, are palindromes. Hence their reversals $L_0^{c_1}L_1^{c_2}\ldots L_{i-2}^{c_{i-1}}L_{i-1}^{c}$ are palindromes too, and are suffixes of $p$ by Corollary \ref{suffix}, proving the second assertion.

The last assertion then follows.
\end{proof}

The proposition implies that the compact graph, and the Sturmian graph, are embedded in the tree of central words, and in the Stern-Brocot tree.

\begin{corollary}\label{embedded} Consider in the tree of central words (resp. the Stern-Brocot tree) the path form the root to $p$ (resp. to the slope $s$ of $M_m$). Direct the edges downwards and label each edge $u\to v$ (resp. $p/q\to p'/q'$) by $u^{-1}v$ (resp. by $p'+q'-p-q$). Add an edge from each vertex to the first vertex after the first turn below on the path; the label of a new edge depends only on its final vertex. This graph is the compact graph (resp. the Sturmian graph).
\end{corollary}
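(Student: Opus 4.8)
The plan is to transport the compact graph (resp.\ the Sturmian graph) onto the path from the root to $p$ in the tree of central words (resp.\ to the slope $s$ of $M_m$ in the Stern-Brocot tree), the dictionary being provided by the iterated palindromisation of Proposition \ref{directive}. First I would identify the vertices: by Proposition \ref{directive} the path to $p$ is directed by $v=a^{c_1}b^{c_2}\cdots(a\text{ or }b)^{c_m}$, its vertices are the palindromic prefixes $Pal(u)$ with $u$ a prefix of $v$, and these are exactly the formal prefixes of $p=L_0^{c_1}L_1^{c_2}\cdots L_{m-1}^{c_m}$, hence the vertices of the compact graph. Since the palindromic prefixes, listed by increasing length, are the successive formal prefixes of \eqref{L}, two consecutive ones differ by a single formal factor $L_i$, so the path label $u^{-1}v$ equals that $L_i$; by the last assertion of Proposition \ref{directive} this factor is constantly $L_{j-1}$ throughout the $j$-th block of $v$. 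Thus the downward path edges, labelled $u^{-1}v$, are precisely the horizontal edges of the compact graph.

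Next I would match the added edges with the curved edges. A turn of $v$ occurs exactly at a block boundary, i.e.\ at a formal transition $L_i\to L_{i+1}$, and I read a turn as being located at the vertex where the directive letter changes. For a vertex $U$ lying in the $i$-th formal block ($0\le i\le m-2$), the first turn strictly below $U$ is then the end of this block, reached after some $k\ge 1$ copies of $L_i$, and the first vertex past it is the first vertex $UL_i^kL_{i+1}$ of the $(i+1)$-st block. This is exactly the endpoint prescribed for the curved edge issued from $U$ in the definition of the compact graph, and its label $L_{i+1}$ --- equal to the label of the path edge entering that endpoint --- depends only on the endpoint, as the statement requires. Vertices of the last block have no turn below them, which matches the restriction $i<m-1$. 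Hence the path edges together with the added edges reproduce exactly the horizontal and the curved edges of the compact graph.

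For the Stern-Brocot version I would invoke the classical isomorphism between the Stern-Brocot tree, the tree of Christoffel words and the tree of central words (\cite[Sections 12.1, 14.1, 14.2]{Re}): the node $p/q$ on the path to $s$ corresponds to the palindromic prefix above, and moving down one edge $p/q\to p'/q'$ appends the factor $L_i$, of length $(p'+q')-(p+q)$. Along a straight run the mediant is taken against a fixed boundary fraction, so this increment is constant inside a block and equals $|L_i|=q_i$. Consequently labelling the Stern-Brocot edges by $p'+q'-p-q$ is exactly the replacement of each label $L_i$ by its length $q_i$, that is, the passage from the compact graph to the Sturmian graph recalled before Corollary \ref{SturmianGraph}; the added edges are treated identically, their label being now the length $q_i$ attached to the endpoint.

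I expect the main obstacle to be the bookkeeping of the turns in the second step: one must fix the convention that a turn sits at a vertex (so that a block-boundary vertex already looks downward to the next turn, producing the long skip seen in Figure \ref{stgraph}), and then verify carefully that ``the first vertex after the first turn below'' reproduces precisely the targets $UL_i^kL_{i+1}$ of the curved edges, including the boundary cases at the origin and at the last block, together with the clause that the label depends only on the final vertex. The remaining steps are a direct translation through Proposition \ref{directive} and the tree correspondence.
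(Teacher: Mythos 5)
Your proposal is correct and takes essentially the same approach as the paper: the paper's own proof is a brief appeal to Proposition \ref{directive} (each node on the path is $Pal(u)$ for a prefix $u$ of the directive word, hence a formal prefix of (\ref{L}), and the compact and Sturmian graphs are then read off), and your argument is exactly this, simply carried out in fuller detail, including the explicit bookkeeping of the turns against the curved edges and the passage to lengths for the Stern--Brocot case.
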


\begin{proof} Consider some node on the tree of central words; as in the proof above, we associate with it the word $v\in \{a,b\}^*$, which encodes the path from the root to this vertex;  then this vertex is $Pal(v)$ (see \cite[Section 12.1]{Re}). The construction of the compact graph then follows from the proposition. And from this the construction of the Sturmian graph also follows. 
\end{proof}

An example may be useful. Let $m=3,a_1=2,a_2=1,a_3=3$. Then $M_{-1}=b,M_{0}=a,M_1=M_0^{a_1-1}M_{-1}=ab$, $M_2=M_1^{a_2}M_0=aba$, 
$M_3=M_2^{a_3}M_1=abaabaabaab$. Since $M_3=pab$, we have $p=aba^2ba^2ba$. The palindromic prefixes of $p$ are $1$, $a$, $aba$, $abaaba$, $p$; 
the letter following each palindromic prefix is underlined: $p=\underline a \,\underline ba\underline a ba\underline aba$, and therefore $p=Pal(abaa)$. One has 
$Pal(1)=1,Pal(a)=a,Pal(ab)=aba, Pal(aba)=abaaba$. The tree interpretation is shown in Figure \ref{tree}: the words $1,a,aba,abaaba,p$ are the nodes on the 
path from the root to $p$ in the tree of central words. One recovers in two ways that $L_0=a,L_1=ba,L_2=aba$: using $L_i=\widetilde M_i$, or using the last 
assertion of Proposition \ref{directive}.

\begin{figure}
\begin{tikzpicture}
\draw[-]  (-5.2,-0.2) to (-5.9,-0.9);
\draw[-]  (-5.8,-1.2) to (-5.1,-1.9);
\draw[-]  (-5.2,-2.2) to (-5.9,-2.9);
\draw[-]  (-6.2,-3.2) to (-6.9,-3.9);

\draw (-7,-4) node {$abaabaaba$};
\draw (-6,-3) node {$abaaba$};
\draw (-5,-2) node {$aba$};
\draw (-6,-1) node {$a$};
\draw (-5,0) node {$\epsilon$};
\draw[->]  (-1.1,-0.1) to (-1.9,-0.9);
\draw[->,>=stealth']  (-1.8,-1.2) to (-1.1,-1.9);
\draw[->,>=stealth']  (-1.2,-2.2) to (-1.9,-2.9);
\draw[->,>=stealth']  (-2.2,-3.2) to (-2.9,-3.9);
\draw[->,>=stealth'] (-0.9,-0.1) to [bend left=60]  (-0.9,-1.9);
\draw[->,>=stealth'] (-2.1,-1.1) to [bend right=60]  (-2.2,-2.9);
\draw (-3,-4) node {$abaabaaba$};
\draw (-2,-3) node {$abaaba$};
\draw (-1,-2) node {$aba$};
\draw (-2,-1) node {$a$};
\draw (-1,0) node {$\epsilon$};
\draw (-1.6,-0.4) node {$a$};
\draw (-1.30,-1.3) node {$ba$};
\draw (-0.2,-1) node {$ba$};
\draw (-3,-1.8) node {$aba$};
\draw (-1.2,-2.5) node {$aba$};
\draw (-2.2,-3.5) node {$aba$};
\draw[->]  (2.9,-0.1) to (2.1,-0.9);
\draw[->,>=stealth']  (2.2,-1.2) to (2.9,-1.9);
\draw[->,>=stealth']  (2.8,-2.2) to (2.1,-2.9);
\draw[->,>=stealth']  (1.8,-3.2) to (1.1,-3.9);
\draw[->,>=stealth'] (3.1,-0.1) to [bend left=60]  (3.1,-1.9);
\draw[->,>=stealth'] (1.9,-1.1) to [bend right=60]  (1.8,-2.9);
\draw (1,-4) node {$\frac{4}{7}$};
\draw (2,-3) node {$\frac{3}{5}$};
\draw (3,-2) node {$\frac{2}{3}$};
\draw (2,-1) node {$\frac{1}{2}$};
\draw (3,0) node {$\frac{1}{1}$};
\draw (2.3,-0.4) node {$1$};
\draw (2.7,-1.3) node {$2$};
\draw (3.8,-1) node {$2$};
\draw (1.2,-1.9) node {$3$};
\draw (2.7,-2.6) node {$3$};
\draw (1.6,-3.7) node {$3$};
\end{tikzpicture}
\caption{A path in the tree of central words, a compact graph and a Sturmian graph}
\label{tree}
\end{figure}

\section{Appendix: a proof of existence and uniqueness of the greedy and lazy representations}\label{app}

We want to prove Proposition \ref{greedylazy}. We begin by two lemmas.

\begin{lemma}\label{altlazyineq} Let $k=0,\ldots,m$ and a legal Ostrowski representation
\begin{equation}\label{Ost2}
N=d_1q_0+d_2q_1+\cdots+d_kq_{k-1}.
\end{equation}

(i) If in (\ref{Ost2}) the sequence $d_i$ is alternating, with $k=0$ or $d_k\neq 0$, then $N=q_k-1$.

(ii) If in (\ref{Ost2}) one assumes that the representation is lazy and that $k=0$ or $d_k=b_k$, then $N\geq q_k-1$.
\end{lemma}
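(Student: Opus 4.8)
The plan is to treat the two parts separately: part (i) by a direct telescoping computation, and part (ii) by induction on $k$.

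For (i), I would first observe that in an alternating sequence each digit $d_i$ equals $0$ or $b_i$, so the hypothesis ``$k=0$ or $d_k\neq 0$'' forces, when $k\geq 1$, that $b_k\geq 1$ and $d_k=b_k$; hence $N$ is the alternating sum whose nonzero digits sit exactly at the indices $k,k-2,k-4,\ldots$. The key identity is $q_i-q_{i-2}=a_iq_{i-1}=b_iq_{i-1}$ for $i\geq 2$ (immediate from the right recursion $q_i=a_iq_{i-1}+q_{i-2}$ and $b_i=a_i$), together with $b_1q_0=a_1-1=q_1-1$. I would then split on the parity of $k$: if $k$ is even the nonzero indices are $2,4,\ldots,k$ and $\sum_{j}b_{2j}q_{2j-1}=\sum_j(q_{2j}-q_{2j-2})=q_k-q_0=q_k-1$; if $k$ is odd the nonzero indices are $1,3,\ldots,k$ and $b_1q_0+\sum_{j\geq 1}b_{2j+1}q_{2j}=(q_1-1)+(q_k-q_1)=q_k-1$. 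The case $k=0$ is the empty sum, equal to $q_0-1=0$.

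For (ii), I would induct on $k$, the base cases $k=0$ (empty sum $=0=q_0-1$) and $k=1$ (where $d_1=b_1$ gives $N=b_1q_0=q_1-1$) being immediate; note $k=1$ must be a base case because $q_1=b_1q_0+q_{-1}$ fails as $b_1\neq a_1$. For $k\geq 2$ I write $N=b_kq_{k-1}+\sum_{i=1}^{k-1}d_iq_{i-1}$, and since $q_k=b_kq_{k-1}+q_{k-2}$ for $k\geq 2$, it suffices to prove $\sum_{i=1}^{k-1}d_iq_{i-1}\geq q_{k-2}-1$. If $d_{k-1}\geq 1$ this is clear, since $\sum_{i=1}^{k-1}d_iq_{i-1}\geq d_{k-1}q_{k-2}\geq q_{k-2}\geq q_{k-2}-1$. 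If $d_{k-1}=0$, it suffices to show $\sum_{i=1}^{k-2}d_iq_{i-1}\geq q_{k-2}-1$: for $k=2$ this is $0\geq q_0-1=0$, and for $k\geq 3$ the laziness condition \eqref{lazy} applied at index $k-1$ (legitimate since $d_k=b_k\neq 0$ is the top nonzero digit) yields $d_{k-2}=b_{k-2}$, so the truncation $\sum_{i=1}^{k-2}d_iq_{i-1}$ is again a lazy representation with top digit $b_{k-2}$ and the induction hypothesis applies.

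The telescoping and recursion computations are harmless; the delicate point in (ii) is the bookkeeping around the lazy condition, namely checking that its range convention (for $2\le i\le\max\{i:d_i\neq 0\}$) is respected both when invoking it at index $k-1$ and when restricting to the truncated representation, and that the degenerate cases $k=2$ (truncation of length $0$) and $b_{k-2}=0$ (possible only when $k-2=1$ and $a_1=1$, so $d_{k-2}=0=b_1$) are absorbed by the base cases. I do not expect to need part (i) for part (ii); I note in passing that the complement symmetry $d_i\mapsto b_i-d_i$ interchanges lazy and greedy representations and would reduce (ii) to the boundedness of greedy representations, but that reformulation requires its own inductive argument and so offers no real shortcut.
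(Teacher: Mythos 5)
Your proposal is correct and takes essentially the same route as the paper: your part (ii) is the paper's induction with the identical case split (if the digit just below the top one is nonzero the bound $N\geq b_kq_{k-1}+q_{k-2}=q_k$ is immediate; if it vanishes, laziness forces the next digit down to equal $b_{k-2}$ and the induction hypothesis applies two levels down), including the same absorption of the degenerate cases $k\leq 2$ and $b_1=0$ into the base cases. Your part (i) simply unrolls the paper's two-step induction into the closed-form telescoping $b_iq_{i-1}=q_i-q_{i-2}$ for $i\geq 2$ together with $b_1q_0=q_1-1$, which is the same computation.
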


Note that we say that the representation (\ref{Ost2}) is legal (resp. greedy, resp. lazy) if the representation (\ref{Ost1}) of $N$ obtained by letting $d_i=0$ for 
$i=k+1,\ldots,m$ has this property.

\begin{proof} (i) The hypothesis implies $d_k=b_k$. For $k=0$ and $k=1$, the equality follows from $q_{0}=1$ and $b_1=a_1-1=q_1-1$. Suppose that $k\geq 1$, and that the equality is true for $k-1$ and $k$. 
Consider an alternating sequence $d_1,\ldots,d_{k+1}$ with $d_{k+1}\neq 0$; then $d_{k+1}=b_{k+1}$. We have $\sum_{i=1}^{k+1} d_iq_{i-1}=  b_{k+1}q_k+\sum_{i=1}^{k-1} d_iq_{i-1}$ (since $d_k=0$, because 
the sequence is alternating)  $=a_{k+1}q_k+q_{k-1}-1$  (by induction, since $d_{k-1}=b_{k-1}\neq 0$) $=q_{k+1}-1$.

(ii) This is clearly true for $k=0$ and $k=1$, since $q_{0}=1$ and $b_1=a_1-1=q_1-1$. Assume that $k\geq 1$ and that it is true for $k-1$ and $k$, and we prove it for $k+1$; thus we consider a 
sequence $d_1,\ldots,d_{k+1}$ with $d_{k+1}=b_{k+1}$. If $d_k\neq 0$, then $N=\sum_{i=1}^{k+1} d_iq_{i-1}\geq b_{k+1}q_k+q_{k-1}=a_{k+1}q_k+q_{k-1}= q_{k+1}\geq q_{k+1}-1$. If $d_k=0$ then, 
assuming that $k\geq 2$, we have 
$d_{k-1}=b_{k-1}$ since the representation is lazy
; thus by induction, $N\geq b_{k+1}q_k +q_{k-1}-1=a_{k+1}q_k+q_{k-1}-1=q_{k+1}-1$. The remaining case is $k=1, d_2=b_2=a_2, d_1=0$ and $N=d_2q_1=a_2a_1=q_2-1$.
\end{proof}

\begin{lemma}\label{greedylazyineq} Let $0\leq k\leq m$, $N\in \N$, and $N=\sum_{i=1}^{k} d_iq_{i-1}$ be a legal Ostrowski representation.

(i) If the representation is greedy, then 
$$
N \leq q_k-1;
$$
if moreover $k=0$ or $d_k\neq 0$, then
$$
q_{k-1}-1<N.
$$

(ii) If $k\geq 1$ and the representation is lazy, then
$$
N\leq q_k+q_{k-1}-2;
$$
if moreover, $d_k\neq 0$, then
$$
q_{k-1}+q_{k-2}-2 <N.
$$

(iii) One has
$$\sum_{i=1}^{k}b_iq_{i-1}=q_k+q_{k-1}-2.
$$
\end{lemma}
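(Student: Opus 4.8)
The plan is to prove (iii) first, as a pure identity about the $q_i$ and $b_i$, and then to harvest two of the four inequalities as immediate consequences, leaving genuine inductions only for the greedy upper bound in (i) and the lazy lower bound in (ii).

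For (iii) I would induct on $k$. The case $k=1$ is $b_1 = a_1 - 1 = q_1 + q_0 - 2$, and for the step I would use $b_{k+1} = a_{k+1}$ together with $q_{k+1} = a_{k+1} q_k + q_{k-1}$ to obtain
\[ \sum_{i=1}^{k+1} b_i q_{i-1} = (q_k + q_{k-1} - 2) + a_{k+1} q_k = q_{k+1} + q_k - 2. \]
Two inequalities then follow with no further work. The first inequality of (ii) is immediate from legality and (iii), since $N = \sum d_i q_{i-1} \le \sum b_i q_{i-1} = q_k + q_{k-1} - 2$. The second inequality of (i) needs no greediness at all: for $k=0$ one has $N = 0 > q_{-1} - 1 = -1$, and for $d_k \neq 0$ one has $N \ge d_k q_{k-1} \ge q_{k-1} > q_{k-1} - 1$.

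For the upper bound $N \le q_k - 1$ in (i) I would use induction on $k$, with base cases $k=0$ ($N=0$) and $k=1$ ($N = d_1 \le b_1 = q_1 - 1$). In the step at $k+1$ I distinguish two cases according to whether $d_{k+1}$ is maximal. If $d_{k+1} \le b_{k+1} - 1$, then bounding the truncation by $\sum_{i=1}^k d_i q_{i-1} \le q_k - 1$ via the induction hypothesis gives $N \le (a_{k+1}-1) q_k + (q_k - 1) = a_{k+1} q_k - 1 \le q_{k+1} - 1$. If $d_{k+1} = b_{k+1}$, the greedy condition \eqref{greedy} forces $d_k = 0$, so $N = a_{k+1} q_k + \sum_{i=1}^{k-1} d_i q_{i-1}$; applying the induction hypothesis to the inherited greedy truncation $\sum_{i=1}^{k-1} d_i q_{i-1} \le q_{k-1} - 1$ yields $N \le a_{k+1} q_k + q_{k-1} - 1 = q_{k+1} - 1$. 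The one subtle point is precisely that $d_k = 0$ in the second case, which keeps the carried bound at $q_{k-1}$ rather than $q_k$.

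The main obstacle is the lower bound $q_{k-1} + q_{k-2} - 2 < N$ in (ii), because laziness controls the digits only in the downward direction. I would prove it by induction on $k$ (base case $k=1$: $N = d_1 \ge 1 > q_0 + q_{-1} - 2 = -1$). In the step, I would write the length-$(k+1)$ representation as $N = d_{k+1} q_k + M$ with $M = \sum_{i=1}^k d_i q_{i-1}$ and $d_{k+1} \ge 1$, so that $N \ge q_k + M$; it then suffices to prove $M \ge q_{k-1} - 1$, since this gives $N \ge q_k + q_{k-1} - 1 > q_k + q_{k-1} - 2$. I would split on $d_k$. If $d_k \neq 0$, then $M$ is itself a lazy representation with nonzero top digit, and the induction hypothesis gives even $M \ge q_{k-1} + q_{k-2} - 1 \ge q_{k-1} - 1$. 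If $d_k = 0$ and $k \ge 2$, then laziness \eqref{lazy} applied at index $k$ forces $d_{k-1} = b_{k-1}$, whence $M = \sum_{i=1}^{k-1} d_i q_{i-1}$ is a lazy representation whose top digit equals $b_{k-1}$, and Lemma \ref{altlazyineq}(ii) gives exactly $M \ge q_{k-1} - 1$; the remaining case $k=1$ with $d_1 = 0$ gives $M = 0 = q_0 - 1$ directly. This is the place where Lemma \ref{altlazyineq}(ii), tailored to lazy data with a maximal top digit, does the real work; everything else is bookkeeping with $q_{k+1} = a_{k+1} q_k + q_{k-1}$ and with the stability of greediness and laziness under truncation.
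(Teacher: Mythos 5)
Your proof is correct and takes essentially the same route as the paper's: the same induction on $k$ with the case split $d_{k+1}=b_{k+1}$ (forcing $d_k=0$ by greediness) versus $d_{k+1}\le b_{k+1}-1$ for the upper bound in (i), and the same appeal to Lemma~\ref{altlazyineq}(ii) in the $d_k=0$ case of the lazy lower bound in (ii). Your only deviations are minor streamlinings --- proving (iii) first and reading off the lazy upper bound directly from legality, where the paper runs a separate (but likewise legality-only) induction, and using the induction hypothesis where the paper uses the direct bound $N\ge q_k+q_{k-1}$ when $d_k\ge 1$ --- and both are valid.
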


\begin{proof} 
(i) 
We prove the first inequality by induction on $k$. 
For $k=0$, $N=0$ and it holds since $q_0=1$.
For $k=1$, it holds since $d_1\leq a_1-1$. 
Assume that $k\geq 1$, and that it is true for $1,\ldots,k$, and we prove it for $k+1$. If $d_{k+1}=a_{k+1}=b_{k+1}$ (since  $k+1\geq 
2$), then $d_k=0$ by (\ref{greedy}); then $d_1q_0+d_2q_1+\cdots+d_{k+1}q_{k}=d_1q_0+d_2q_1+\ldots+d_{k-1}q_{k-2}+a_{k+1}q_k\leq $ (by induction) 
$q_{k-1}-1+a_{k+1}q_k=q_{k+1}-1$; if on the other hand, $d_{k+1}\leq a_{k+1}-1$, then $d_1q_0+d_2q_1+\cdots+d_{k+1}q_{k}=d_1q_0+d_2q_1+\cdots+d_kq_{k-1}+d_{k+1}
q_k\leq $ (by induction) $
q_k-1+a_{k+1}q_k-q_k<-1+q_{k-1}+a_{k+1}q_k=q_{k+1}-1$.

The second inequality follows from $q_{-1}=0$, and from $d_k>0$ if $k\geq 1$.

(ii) If $k=1$, both inequalities are easy to verify. Suppose that they hold for $k\geq 1$, and consider the case $k+1$, $N=\sum_{i=1}^{k+1} d_iq_{i-1}$. By induction, $
\sum_{i=1}^{k} d_iq_{i-1}\leq q_k+q_{k-1}-2$, hence, since $d_{k+1}\leq a_{k+1}$, $N\leq q_k+q_{k-1}-2+a_{k+1}q_k=q_{k+1}+q_k-2$. 

Suppose now that $d_{k+1}\neq 0$. Then, if $d_{k}\geq 1$, then 
$N=d_{k+1}q_k+d_kq_{k-1}+\cdots \geq q_k+q_{k-1}> q_k+q_{k-1}-2$. Suppose now that $d_k=0$; if $k\geq 2$, we have $d_{k-1}=b_{k-1}$ by lazyness, hence by
Lemma \ref{altlazyineq} (ii) $\sum_{i=1}^{k-1} d_iq_{i-1}\geq q_{k-1}-1$; thus $N=d_{k+1}q_k+\sum_{i=1}^{k-1} d_iq_{i-1}\geq q_k+q_{k-1}-1>q_k+q_{k-1}-2$. The remaining case is $k=1$, $d_2\geq 1$, $d_1=0$; then $N=d_2q_1=d_2a_1\geq a_1> a_1-1=q_1+q_0-2 $.

(iii) is proved similarly by induction.
\end{proof}

\begin{corollary}\label{lazyBig} Let $k\geq 1$. For a lazy representation $N=\sum_{i=1}^{k} d_iq_{i-1}$, one has $d_k=b_k$ if and only if $N\geq q_k-1$.
\end{corollary}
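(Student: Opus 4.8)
The plan is to prove the two implications of the equivalence separately, each by a direct appeal to the lemmas of this appendix.

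For the implication $d_k=b_k\Rightarrow N\geq q_k-1$, I would simply invoke Lemma~\ref{altlazyineq}(ii): the representation is lazy by hypothesis and $d_k=b_k$, which are exactly the hypotheses of that lemma, so it yields $N\geq q_k-1$ with no additional argument. This also covers the degenerate case $k=1$, $b_1=0$, where $N=0=q_1-1$.

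For the converse I would argue contrapositively: suppose $d_k\neq b_k$, so that $d_k\leq b_k-1$ by legality, and show $N\leq q_k-2<q_k-1$. Writing $N=d_kq_{k-1}+\sum_{i=1}^{k-1}d_iq_{i-1}$ and bounding the tail by its largest legal value $\sum_{i=1}^{k-1}b_iq_{i-1}=q_{k-1}+q_{k-2}-2$ (Lemma~\ref{greedylazyineq}(iii)), one gets, for $k\geq 2$,
$$
N\leq (b_k-1)q_{k-1}+q_{k-1}+q_{k-2}-2=b_kq_{k-1}+q_{k-2}-2=q_k-2,
$$
the last equality using $b_k=a_k$ and the recursion $q_k=a_kq_{k-1}+q_{k-2}$. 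Notice that only legality (each $d_i\leq b_i$), not laziness, is used in this direction.

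The one point needing care is the base case $k=1$, for which Lemma~\ref{greedylazyineq}(iii) is unavailable (the tail sum is empty and the identity would read $q_0+q_{-1}-2=-1$). I would dispose of it by hand: when $k=1$ and $d_1\neq b_1$ we have $N=d_1\leq b_1-1=a_1-2=q_1-2$, so again $N\leq q_k-2$. Combining the two cases completes the converse, and hence the corollary. I do not anticipate any real obstacle beyond keeping this edge case in view.
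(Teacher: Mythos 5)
Your proof is correct, and while the forward implication is exactly the paper's (both directions of the paper's proof and yours begin by citing Lemma~\ref{altlazyineq}(ii) for $d_k=b_k\Rightarrow N\geq q_k-1$), your converse takes a genuinely different and more elementary route. The paper argues by a shift: it claims $N+q_{k-1}$ has the lazy representation $(d_k+1)q_{k-1}+\sum_{i=1}^{k-1}d_iq_{i-1}$ and applies the upper bound of Lemma~\ref{greedylazyineq}(ii) to get $N+q_{k-1}\leq q_k+q_{k-1}-2$, hence $N\leq q_k-2$. You instead bound the tail directly by its maximal legal value using Lemma~\ref{greedylazyineq}(iii) and the recursion $q_k=a_kq_{k-1}+q_{k-2}$, reaching the same $N\leq q_k-2$. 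The computations are numerically equivalent, but your version buys two things. First, it makes explicit that only legality, never laziness, is used in this direction, so the ``only if'' half holds for every legal representation. Second, it sidesteps a small delicacy in the paper's step: the incremented representation need not literally be lazy when the top nonzero digit of the original lies strictly below $k-1$ (for instance, digits $(d_1,0,0)$ with $0<d_1<b_1$ become $(d_1,0,1)$, violating condition \eqref{lazy} at $i=2$); the paper's conclusion survives because the first inequality of Lemma~\ref{greedylazyineq}(ii) in fact requires only legality, but your direct estimate avoids the issue entirely. Your separate treatment of $k=1$ is also necessary and correct: Lemma~\ref{greedylazyineq}(iii) applied to the empty tail would give $-1$ rather than $0$, and your hand computation $N=d_1\leq b_1-1=q_1-2$ (the case $b_1=0$ being vacuous, since then $d_1\neq b_1$ is impossible) closes that gap cleanly.
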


\begin{proof} Suppose that $d_k=b_k$; then $N\geq q_k-1$ by Lemma \ref{altlazyineq} (ii). 

Suppose now that $d_k\neq b_k$; then $d_k\leq b_k-1$. Then $N+q_{k-1}$ has the lazy representation $(d_k+1)q_{k-1}+\sum_{i=1}^{k-1} d_iq_{i-1}$. Thus by Lemma \ref{greedylazyineq} (ii), $N+q_{k-1}\leq q_{k}+q_{k-1}-2$, hence $N\leq q_{k}-2$.
\end{proof}

\begin{proof}[of Proposition \ref{greedylazy}]  We observe that the sequence $q_k$, $k=-1,0,1,2,\ldots, m,$ is strictly increasing, except that one can have $q_0=q_1=1$, and this happens 
if and only if $a_1=1$.

(i) Let $0\leq k\leq m$. We prove the existence of a greedy representation $N=\sum_{i=1}^{k}d_iq_{i-1}$ for each $N$ satisfying $N \le q_{k} - 1$; 
by the previous observation, this will prove the existence 
of a greedy representation for each $N$ with $0\leq N\leq q_m-1$. For $k=0$, 
we have $N=0$ and existence is clear. For $k=1$, $N\leq q_1-1$; then $N\leq a_1-1$ and we have $N=d_1q_0$, $d_1=N\leq a_1-1$ and existence follows.

Suppose now that $k\geq 1$, and let $N$ satisfy $N \le q_{k+1} - 1$. 
Then $N\leq a_{k+1}q_k+q_{k-1}-1$. Since $q_{k-1}-1<q_k$, we have $N<(a_{k+1}+1)q_k$; thus,
performing the Euclidean division of $N$ by $q_k$, there are uniquely
determined $r, t$ with 
$N = t q_k + r$, $0 \le r \le q_k - 1$ and $t\leq a_{k+1}$. 

By induction on $k$, $r$ has a greedy representation $r=\sum_{i=1}^{k}d_iq_{i-1}$, and then $N$ has the representation obtained by adding that of $r$ and $tq_k$. If $t < a_{k+1}$, it is a greedy representation. If $t=a_{k+1}$, then
we have $a_{k+1}q_k+r=N\leq q_{k+1}-1=a_{k+1}q_k+q_{k-1}-1$, thus $r\leq  q_{k-1} - 1$, and $N = r + 0 \cdot q_{k-1} + a_{k+1} q_k$, and we conclude by induction on $k$ that $r$ has a greedy representation, hence $N$ too.

We prove now the uniqueness of the greedy representation. We may assume that $N\neq 0$. 
Assume that
we have two greedy representations for $N$, $N=\sum_{i=1}^{k}d_iq_{i-1}$, $N=\sum_{i=1}^{h}
e_iq_{i-1}$, written in such a way that $d_k\neq 0\neq e_h$. We have by Lemma \ref{greedylazyineq} (i): $N< q_k$, $N< q_h$, $N\geq q_{k-1}$, and $N\geq q_{h-1}$. This 
forces $k=h$, since the sequence $(q_i)$ is increasing. By Lemma \ref{greedylazyineq} (i), we have $r=\sum_{i=1}^{k-1} d_iq_{i-1}<q_{k-1}$; since $N=d_kq_{k-1}+r$, $d_{k}$ is the quotient of the Euclidean 
division of $N$ by $q_{k-1}$; similarly for $e_k$, so that
$d_{k}=e_k$, and the representations coincide by induction on $k$, since the greedy condition remains if one replaces the highest nonzero digit by 0.

(ii) We prove now the existence of the lazy representation. We observe that $(*)$ the sequence $q_{k} + q_{k-1} -2$ is strictly increasing for $k=0,\ldots,m$, with first value $-1$. 
We prove by induction on $k=1,\ldots,m$ that if 
$N\leq q_k+q_{k-1}-2$, then $N$ has a lazy representation of the form $N=\sum_{i=1}^{k} d_iq_{i-1}$. For $k=1$, the inequality is $N\leq 
a_1-1$, and we have indeed $N=d_1q_0$, with $d_1=N$, $0\leq d_1\leq a_1-1$. Assume now that $k\geq 1$, and that the property holds for $k$, and we prove it when $N$ satisfies $ N \le q_{k+1} + q_k - 2$. By induction, we may assume that $q_k+q_{k-1}  - 2 <N$. We have  $q_k +q_{k-1} - 1 \leq N$ and since $q_{k-1}-1\geq 0$ (because $k\geq 1$), 
there exists $j$, $1\leq j \leq a_{k+1}$ such that $jq_k \leq N$ and we take $j$ maximal. Then either $j=a_{k+1}$ and $N\leq q_k+q_{k+1}-2=(j+1)q_k+q_{k-1}-2$; or 
$j<a_{k+1}$ and then $j+1\leq a_{k+1}$ and by maximality, $N<(j+1)q_k\leq (j+1)q_k+q_{k-1}-1$ and we have $N\leq (j+1)q_k+q_{k-1}-2$, too.

Write $N = j q_k + N'$; then $0 \le N' \le q_{k} + q_{k-1} -2$. By induction, $N'$ has a lazy representation 
$
N' = d_1 q_0 + \cdots + d_k q_{k-1}.$
Then, $d_1 q_0 + \cdots + d_k q_{k-1} + j q_k$ is a lazy representation of $N$, except when $d_k = 0$ and $d_{k-1} \not= b_{k-1}$ (so that $k\geq 2$);
since $a_k=b_k$, $d_1 q_0 + \cdots + (d_{k-1} + 1) q_{k-2}  + b_k q_{k-1} + (j-1) q_k$ is then a lazy representation of $N$. 

We prove now uniqueness of the lazy representation. We may assume that $N\neq 0$. Suppose that $N$ has two lazy representations $N=\sum_{i=1}^{k}d_iq_{i-1}$, $N=\sum_{i=1}^{h}e_iq_{i-1}$, written in such a way that $d_k\neq 0\neq e_h$. We have by Lemma \ref{greedylazyineq} (ii): $q_{k-1}+q_{k-2}-2 < N\leq q_k+q_{k-1}-2$ and $q_{h-1}+q_{h-2}-2 < N\leq q_h+q_{h-1}-2$. This implies that $k=h$, by observation $(*)$. 

We claim that $b_k-d_k$ is the quotient of the Euclidean division of $N'=q_k+q_{k-1}-2-N$ by $q_{k-1}$. The same being true for $b_k-e_k$, we have $d_k=e_k$ and we conclude by induction that the representations coincide.  

For the claim, we may assume that $k\geq 2$; 
we have $N'=\sum_{i=1}^{k}(b_i-d_i)q_{i-1}$ by Lemma \ref{greedylazyineq} (iii). We have $N'=(b_k-d_k)q_{k-1}+r$, where $r=\sum_{i=1}^{k-1}(b_i-d_i)q_{i-1}$.
By lazyness, this is a greedy representation of $r$. Hence $r\leq q_{k-1}-1$ by Lemma \ref{greedylazyineq} (i); since $r\geq 0$, the claim follows.
\end{proof}

\medskip

\acknowledgments We thank the two anonymous referees for their comments.
This work was partially supported by NSERC, Canada.

\nocite{*}
\bibliographystyle{abbrvnat}
% use the following instead if you encounter problems 
%\bibliographystyle{alpha}
\bibliography{OstrowskiDef}

\begin{thebibliography}{37}
\providecommand{\natexlab}[1]{#1}
\providecommand{\url}[1]{\texttt{#1}}
\expandafter\ifx\csname urlstyle\endcsname\relax
  \providecommand{\doi}[1]{doi: #1}\else
  \providecommand{\doi}{doi: \begingroup \urlstyle{rm}\Url}\fi

\bibitem[Aigner(2013)]{A}
M.~Aigner.
\newblock \emph{Markov's theorem and 100 years of the uniqueness conjecture}.
\newblock Springer, Cham, 2013.
\newblock ISBN 978-3-319-00887-5; 978-3-319-00888-2.
\newblock URL \url{https://doi.org/10.1007/978-3-319-00888-2}.
\newblock A mathematical journey from irrational numbers to perfect matchings.

\bibitem[Allouche and Shallit(2003)]{AS}
J.-P. Allouche and J.~Shallit.
\newblock \emph{Automatic sequences}.
\newblock Cambridge University Press, Cambridge, 2003.
\newblock ISBN 0-521-82332-3.
\newblock URL \url{https://doi.org/10.1017/CBO9780511546563}.
\newblock Theory, applications, generalizations.

\bibitem[Berstel(1990)]{B}
J.~Berstel.
\newblock Trac\'{e} de droites, fractions continues et morphismes
  it\'{e}r\'{e}s.
\newblock In \emph{Mots}, Lang. Raison. Calc., pages 298--309. Herm\`es, Paris,
  1990.

\bibitem[Berstel et~al.(2009)Berstel, Lauve, Reutenauer, and Saliola]{BLRS}
J.~Berstel, A.~Lauve, C.~Reutenauer, and F.~V. Saliola.
\newblock \emph{Combinatorics on words}, volume~27 of \emph{CRM Monograph
  Series}.
\newblock American Mathematical Society, Providence, RI, 2009.
\newblock ISBN 978-0-8218-4480-9.
\newblock URL \url{https://doi.org/10.1090/crmm/027}.
\newblock Christoffel words and repetitions in words.

\bibitem[Blumer et~al.(1987)Blumer, Blumer, Haussler, McConnell, and
  Ehrenfeucht]{BBHME}
A.~Blumer, J.~Blumer, D.~Haussler, R.~McConnell, and A.~Ehrenfeucht.
\newblock Complete inverted files for efficient text retrieval and analysis.
\newblock \emph{J. Assoc. Comput. Mach.}, 34\penalty0 (3):\penalty0 578--595,
  1987.
\newblock ISSN 0004-5411.
\newblock URL \url{https://doi.org/10.1145/28869.28873}.

\bibitem[Bugeaud and Laurent(2023)]{BL}
Y.~Bugeaud and M.~Laurent.
\newblock Combinatorial structure of {S}turmian words and continued fraction
  expansion of {S}turmian numbers.
\newblock \emph{Ann. Inst. Fourier (Grenoble)}, 73\penalty0 (5):\penalty0
  2029--2078, 2023.
\newblock ISSN 0373-0956.
\newblock URL \url{https://doi.org/10.5802/aif.3561}.

\bibitem[Christoffel(1875)]{C}
E.~B. Christoffel.
\newblock Observatio arithmetica.
\newblock \emph{Annali di Matematica Pura ed Applicata}, 6:\penalty0 145--152,
  1875.

\bibitem[Cohn(1985)]{Co}
P.~M. Cohn.
\newblock \emph{Free rings and their relations}, volume~19 of \emph{London
  Mathematical Society Monographs}.
\newblock Academic Press, Inc. [Harcourt Brace Jovanovich, Publishers], London,
  second edition, 1985.
\newblock ISBN 0-12-179152-1.

\bibitem[Currie and Saari(2009)]{CS}
J.~D. Currie and K.~Saari.
\newblock Least periods of factors of infinite words.
\newblock \emph{Theor. Inform. Appl.}, 43\penalty0 (1):\penalty0 165--178,
  2009.
\newblock ISSN 0988-3754.
\newblock URL \url{https://doi.org/10.1051/ita:2008006}.

\bibitem[de~Luca(1997)]{dL}
A.~de~Luca.
\newblock Sturmian words: structure, combinatorics, and their arithmetics.
\newblock \emph{Theoret. Comput. Sci.}, 183\penalty0 (1):\penalty0 45--82,
  1997.
\newblock ISSN 0304-3975.
\newblock URL \url{https://doi.org/10.1016/S0304-3975(96)00310-6}.

\bibitem[de~Luca and Mignosi(1994)]{dLM}
A.~de~Luca and F.~Mignosi.
\newblock Some combinatorial properties of {S}turmian words.
\newblock \emph{Theoret. Comput. Sci.}, 136\penalty0 (2):\penalty0 361--385,
  1994.
\newblock ISSN 0304-3975.
\newblock URL \url{https://doi.org/10.1016/0304-3975(94)00035-H}.

\bibitem[Dupain(1979)]{D}
Y.~Dupain.
\newblock Discr\'{e}pance de la suite {$(\{n\alpha \})$}, {$\alpha =(1+\surd
  5)/2$}.
\newblock \emph{Ann. Inst. Fourier (Grenoble)}, 29\penalty0 (1):\penalty0 xiv,
  81--106, 1979.
\newblock ISSN 0373-0956.
\newblock URL \url{https://doi.org/10.5802/aif.728}.

\bibitem[Epifanio et~al.(2007)Epifanio, Mignosi, Shallit, and Venturini]{EMSV}
C.~Epifanio, F.~Mignosi, J.~Shallit, and I.~Venturini.
\newblock On {S}turmian graphs.
\newblock \emph{Discrete Appl. Math.}, 155\penalty0 (8):\penalty0 1014--1030,
  2007.
\newblock ISSN 0166-218X.
\newblock URL \url{https://doi.org/10.1016/j.dam.2006.11.003}.

\bibitem[Epifanio et~al.(2012)Epifanio, Frougny, Gabriele, Mignosi, and
  Shallit]{EFGMS}
C.~Epifanio, C.~Frougny, A.~Gabriele, F.~Mignosi, and J.~Shallit.
\newblock Sturmian graphs and integer representations over numeration systems.
\newblock \emph{Discrete Appl. Math.}, 160\penalty0 (4-5):\penalty0 536--547,
  2012.
\newblock ISSN 0166-218X.
\newblock URL \url{https://doi.org/10.1016/j.dam.2011.10.029}.

\bibitem[Ferenczi and Zamboni(2013)]{FZ}
S.~Ferenczi and L.~Q. Zamboni.
\newblock Clustering words and interval exchanges.
\newblock \emph{J. Integer Seq.}, 16\penalty0 (2):\penalty0 Article 13.2.1, 9,
  2013.

\bibitem[Fogg(2002)]{PF}
N.~P. Fogg.
\newblock \emph{Substitutions in dynamics, arithmetics and combinatorics},
  volume 1794 of \emph{Lecture Notes in Mathematics}.
\newblock Springer-Verlag, Berlin, 2002.
\newblock ISBN 3-540-44141-7.
\newblock URL \url{https://doi.org/10.1007/b13861}.
\newblock Edited by V. Berth\'{e}, S. Ferenczi, C. Mauduit and A. Siegel.

\bibitem[Fraenkel(1985)]{Fr}
A.~S. Fraenkel.
\newblock Systems of numeration.
\newblock \emph{Amer. Math. Monthly}, 92\penalty0 (2):\penalty0 105--114, 1985.
\newblock ISSN 0002-9890.
\newblock URL \url{https://doi.org/10.2307/2322638}.

\bibitem[Frid(2018)]{Fi}
A.~E. Frid.
\newblock Sturmian numeration systems and decompositions to palindromes.
\newblock \emph{European J. Combin.}, 71:\penalty0 202--212, 2018.
\newblock ISSN 0195-6698.
\newblock URL \url{https://doi.org/10.1016/j.ejc.2018.04.003}.

\bibitem[Frobenius(1913)]{F}
G.~F. Frobenius.
\newblock \"uber die markoffschen zahlen.
\newblock \emph{Sitzungsberichte der K\"oniglich Preussischen Akademie der
  Wissenschaften zu Berlin}, 26:\penalty0 458--487, 1913.

\bibitem[Gabric et~al.(2021)Gabric, Rampersad, and Shallit]{GRS}
D.~Gabric, N.~Rampersad, and J.~Shallit.
\newblock An inequality for the number of periods in a word.
\newblock \emph{Internat. J. Found. Comput. Sci.}, 32\penalty0 (5):\penalty0
  597--614, 2021.
\newblock ISSN 0129-0541.
\newblock URL \url{https://doi.org/10.1142/S0129054121410094}.

\bibitem[Graham et~al.(1989)Graham, Knuth, and Patashnik]{GKP}
R.~L. Graham, D.~E. Knuth, and O.~Patashnik.
\newblock \emph{Concrete mathematics}.
\newblock Addison-Wesley Publishing Company, Advanced Book Program, Reading,
  MA, 1989.
\newblock ISBN 0-201-14236-8.
\newblock A foundation for computer science.

\bibitem[Heged\"{u}s and Nagy(2016)]{HN}
L.~Heged\"{u}s and B.~Nagy.
\newblock On periodic properties of circular words.
\newblock \emph{Discrete Math.}, 339\penalty0 (3):\penalty0 1189--1197, 2016.
\newblock ISSN 0012-365X.
\newblock URL \url{https://doi.org/10.1016/j.disc.2015.10.043}.

\bibitem[Kassel and Reutenauer(2007)]{KR}
C.~Kassel and C.~Reutenauer.
\newblock Sturmian morphisms, the braid group {$B_4$}, {C}hristoffel words and
  bases of {$F_2$}.
\newblock \emph{Ann. Mat. Pura Appl. (4)}, 186\penalty0 (2):\penalty0 317--339,
  2007.
\newblock ISSN 0373-3114.
\newblock URL \url{https://doi.org/10.1007/s10231-006-0008-z}.

\bibitem[Lapointe(2017)]{La}
M.~Lapointe.
\newblock Study of {C}hristoffel classes: normal form and periodicity.
\newblock In \emph{Combinatorics on words}, volume 10432 of \emph{Lecture Notes
  in Comput. Sci.}, pages 109--120. Springer, Cham, 2017.
\newblock URL \url{https://doi.org/10.1007/978-3-319-66396-8_11}.

\bibitem[Lothaire(2002)]{L}
M.~Lothaire.
\newblock \emph{Algebraic combinatorics on words}, volume~90 of
  \emph{Encyclopedia of Mathematics and its Applications}.
\newblock Cambridge University Press, Cambridge, 2002.
\newblock ISBN 0-521-81220-8.
\newblock URL \url{https://doi.org/10.1017/CBO9781107326019}.
\newblock A collective work by Jean Berstel, Dominique Perrin, Patrice Seebold,
  Julien Cassaigne, Aldo De Luca, Steffano Varricchio, Alain Lascoux, Bernard
  Leclerc, Jean-Yves Thibon, Veronique Bruyere, Christiane Frougny, Filippo
  Mignosi, Antonio Restivo, Christophe Reutenauer, Dominique Foata, Guo-Niu
  Han, Jacques Desarmenien, Volker Diekert, Tero Harju, Juhani Karhumaki and
  Wojciech Plandowski, With a preface by Berstel and Perrin.

\bibitem[Mantaci et~al.(2003)Mantaci, Restivo, and Sciortino]{MRS}
S.~Mantaci, A.~Restivo, and M.~Sciortino.
\newblock Burrows-{W}heeler transform and {S}turmian words.
\newblock \emph{Inform. Process. Lett.}, 86\penalty0 (5):\penalty0 241--246,
  2003.
\newblock ISSN 0020-0190.
\newblock URL \url{https://doi.org/10.1016/S0020-0190(02)00512-4}.

\bibitem[Markoff(1879)]{M1}
A.~Markoff.
\newblock Sur les formes quadratiques binaires ind\'efinies.
\newblock \emph{Mathematische Annalen}, 15:\penalty0 381--496, 1879.

\bibitem[Markoff(1880)]{M2}
A.~Markoff.
\newblock Sur les formes quadratiques binaires ind\'efinies (second m\'emoire).
\newblock \emph{Mathematische Annalen}, 17:\penalty0 379--399, 1880.

\bibitem[Osborne and Zieschang(1981)]{OZ}
R.~P. Osborne and H.~Zieschang.
\newblock Primitives in the free group on two generators.
\newblock \emph{Invent. Math.}, 63\penalty0 (1):\penalty0 17--24, 1981.
\newblock ISSN 0020-9910.
\newblock URL \url{https://doi.org/10.1007/BF01389191}.

\bibitem[Ostrowski(1922)]{O}
A.~Ostrowski.
\newblock Bemerkungen zur {T}heorie der {D}iophantischen {A}pproximationen.
\newblock \emph{Abh. Math. Sem. Univ. Hamburg}, 1\penalty0 (1):\penalty0
  77--98, 1922.
\newblock ISSN 0025-5858.
\newblock URL \url{https://doi.org/10.1007/BF02940581}.

\bibitem[Pirillo(1999)]{P}
G.~Pirillo.
\newblock A new characteristic property of the palindrome prefixes of a
  standard {S}turmian word.
\newblock \emph{S\'{e}m. Lothar. Combin.}, 43:\penalty0 Art. B43f, 3, 1999.

\bibitem[Rauzy(1985)]{R}
G.~Rauzy.
\newblock Mots infinis en arithm\'{e}tique.
\newblock In \emph{Automata on infinite words ({L}e {M}ont-{D}ore, 1984)},
  volume 192 of \emph{Lecture Notes in Comput. Sci.}, pages 165--171. Springer,
  Berlin, 1985.

\bibitem[Restivo and Rosone(2011)]{RR}
A.~Restivo and G.~Rosone.
\newblock Balancing and clustering of words in the {B}urrows-{W}heeler
  transform.
\newblock \emph{Theoret. Comput. Sci.}, 412\penalty0 (27):\penalty0 3019--3032,
  2011.
\newblock ISSN 0304-3975.
\newblock URL \url{https://doi.org/10.1016/j.tcs.2010.11.040}.

\bibitem[Reutenauer(2019)]{Re}
C.~Reutenauer.
\newblock \emph{From {C}hristoffel words to {M}arkoff numbers}.
\newblock Oxford University Press, Oxford, 2019.
\newblock ISBN 978-0-19-882754-2.

\bibitem[Reutenauer(2021)]{Re1}
C.~Reutenauer.
\newblock On quadratic numbers and forms, and {M}arkoff theory.
\newblock \emph{J. Number Theory}, 227:\penalty0 265--305, 2021.
\newblock ISSN 0022-314X.
\newblock URL \url{https://doi.org/10.1016/j.jnt.2021.03.005}.

\bibitem[Simpson and Puglisi(2008)]{SP}
J.~Simpson and S.~J. Puglisi.
\newblock Words with simple {B}urrows-{W}heeler transforms.
\newblock \emph{Electron. J. Combin.}, 15\penalty0 (1):\penalty0 Research Paper
  83, 17, 2008.
\newblock URL \url{https://doi.org/10.37236/807}.

\bibitem[Smith(1876)]{S}
H.~Smith.
\newblock Note on continued fractions.
\newblock \emph{Messenger of Mathematics}, 6:\penalty0 1--14, 1876.

\end{thebibliography}
\label{sec:biblio}

\end{document}